\titleformat{\section}{\centering\bfseries}{\thesection}{1em}{\MakeUppercase}
\titleformat{\subsection}{\bfseries}{\thesubsection}{1em}{}
\patchcmd{\@thm}{\thm@headpunct{.}}{\thm@headpunct{}}{}{}
\newtheoremstyle{plain-nobrackets}
  {}   % ABOVESPACE
  {}   % BELOWSPACE
  {\itshape}  % BODYFONT
  {}       % INDENT
  {\bfseries} % HEADFONT
  {.}         % HEADPUNCT
  {5pt plus 1pt minus 1pt} % HEADSPACE
  {\thmname{#1}\thmnumber{ #2}\normalfont\thmnote{ #3}} % CUSTOM-HEAD-SPEC
\theoremstyle{plain-nobrackets}
\newtheorem{thm}{Theorem}[section]
\newtheorem{conj}[thm]{Conjecture}
\newtheorem{prop}[thm]{Proposition}
\newtheorem{cor}[thm]{Corollary}
\newtheorem{lem}[thm]{Lemma}
\theoremstyle{definition}
\newtheorem{defn}[thm]{Definition}
\newtheorem{example}[thm]{Example}
\newtheorem{remark}[thm]{Remark}
\DeclareMathOperator{\st}{\mathsf{st}}
\DeclareMathOperator{\order}{\mathsf{or}}
\DeclareMathOperator{\funnel}{\mathsf{fun}}
\DeclareMathOperator{\argmin}{\mathsf{argmin}}
\newcommand{\T}{\mathcal{T}}
\newcommand{\F}{\mathcal{F}}
\renewcommand{\L}{\mathcal{L}}
\newcommand{\definition}[1]{\emph{\textbf{#1}}}
\renewcommand*{\@fnsymbol}[1]{\ensuremath{\ifcase#1\or 1\or 2 \or 3 \or 4 \or 5 \or 6 \or 7 \or 8 \or 9 \or 10
   \else\@ctrerr\fi}}
\begin{document}

\title{Promotion, Tangled Labelings, and Sorting Generating Functions}
%Counting Tangled Labelings}

\author[M. Bayer]{Margaret Bayer}
\address{Margaret Bayer: Department of Mathematics, University of Kansas, Lawrence, KS, USA} %\orcidID{0000-0002-8519-5438
\email{bayer@ku.edu}

\author[H. Chau]{Herman Chau}
\address{Herman Chau: Department of Mathematics, University of Washington, Seattle, WA, USA}
\email{hchau@uw.edu}

\author[M. Denker]{Mark Denker}
\address{Mark Denker: Department of Mathematics, University of Kansas, Lawrence, KS, USA}
\email{mark.denker@ku.edu}

\author[O. Goff]{Owen Goff}
\address{Owen Goff: Department of Mathematics, University of Wisconsin, Madison, WI, USA}
\email{ogoff@wisc.edu}

\author[J. Kimble]{Jamie Kimble}
\address{Jamie Kimble: Department of Mathematics, Michigan State University, East Lansing, MI, USA}
\email{kimblej2@msu.edu}

\author[Y. Lee]{Yi-Lin Lee}
\address{Yi-Lin Lee: Department of Mathematics, Indiana University, Bloomington, IN, USA}
\email{yillee@iu.edu}

\author[J. Liang]{Jinting Liang}
\address{Jinting Liang: Department of Mathematics, University of British Columbia, BC, Canada}
\email{liangj@math.ubc.ca}

\subjclass{06A07, 05A15}
\keywords{Poset, promotion, natural labeling, generating function}

\begin{abstract}
We study Defant and Kravitz's generalization of Sch\"utzenberger's promotion operator to arbitrary labelings of finite posets in two directions. Defant and Kravitz showed that applying the promotion operator $n-1$ times to a labeling of a poset on $n$ elements always gives a natural labeling of the poset and called a labeling \emph{tangled} if it requires the full $n-1$ promotions to reach a natural labeling. They also conjectured that there are at most $(n-1)!$ tangled labelings for any poset on $n$ elements. 

In the first direction, we propose a further strengthening of their conjecture by partitioning tangled labelings according to the element labeled $n-1$ and prove that this stronger conjecture holds for \emph{inflated rooted forest posets} and a new class of posets called \emph{shoelace posets}. In the second direction, we introduce sorting generating functions and cumulative generating functions for the number of labelings that require $k$ applications of the promotion operator to give a natural labeling. We prove that the coefficients of the cumulative generating function of the ordinal sum of antichains are log-concave and obtain a refinement of the weak order on the symmetric group.
\end{abstract}

\maketitle

\section{Introduction}
\label{section:Introduction}

\subsection{Background}
A partially ordered set $P$ on $n$ elements is \definition{naturally labeled} if each element is labeled with an integer between $1$ and $n$ such that the labels respect the ordering on elements of $P$. In 1972, Sch\"utzenberger introduced the \definition{promotion} operator on natural labelings of posets \cite{schutzenberger-1972}. The motivation for the promotion operator comes from an earlier paper of Sch\"utzenberger \cite{Schutzenberger-1963}, in which he defines a related operator, \definition{evacuation}, to study the celebrated RSK algorithm. Promotion and evacuation were subsequently studied by Stanley in relation to Hecke algebra products \cite{stanley-2009}, by Rhoades in relation to cyclic sieving phenomenons \cite{rhoades-2010}, and by Striker and Williams in relation to rowmotion and alternating sign matrices \cite{strikerwilliams2012}, among many others.

As originally defined, promotion applies only to natural labelings of posets. Defant and Kravitz considered generalizing the notion of promotion to operate on arbitrary poset labelings and referred to their generalization as \definition{extended promotion} \cite{defant-kravitz-2023}. Given a labeling $L$ of a poset, the extended promotion of $L$ is denoted $\partial L$. A key property of extended promotion is that applying it to a labeling yields a new labeling that is ``closer'' to a natural labeling. This property is quantified precisely in the following theorem.

\begin{thm}[{\cite[Theorem~2.8]{defant-kravitz-2023}}]\label{theorem:defant-kravitz-main}
    For any labeling $L$ of an $n$-element poset, the labeling $\partial^{n-1} L$ is a natural labeling.
\end{thm}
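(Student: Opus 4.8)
The plan is to control a single monotone potential under $\partial$. For a labeling $L$ of an $n$-element poset $P$ and $0\le j\le n$, call the $j$ largest labels \emph{filtered} if $L^{-1}(\{n-j+1,\dots,n\})$ is an up-set (order filter) of $P$, and let $\mu(L)$ be the largest $k$ such that the $j$ largest labels are filtered for every $j\le k$. One checks immediately that $\mu(L)\ge 0$ for every $L$, that $\mu(L)=n$ if and only if $L$ is natural (since $L$ is natural exactly when $L^{-1}(\{i,\dots,n\})$ is an up-set for all $i$), and --- crucially --- that $\mu(L)$ can never equal $n-1$, because $L^{-1}(\{1,\dots,n\})=P$ is trivially an up-set. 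Hence the theorem reduces to the single inequality
\[
\mu(\partial L)\ \ge\ \min(\mu(L)+1,\ n)\qquad\text{for every labeling }L,
\]
since iterating it from $\mu(L)\ge 0$ gives $\mu(\partial^{\,n-1}L)\ge n-1$, which forces $\mu(\partial^{\,n-1}L)=n$, i.e.\ $\partial^{\,n-1}L$ is natural.

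To prove the inequality I would work directly from the jeu-de-taquin description of $\partial$: one slides the label $1$ up a saturated chain $x_1\lessdot x_2\lessdot\cdots\lessdot x_r$, where $x_1=L^{-1}(1)$, each $x_{i+1}$ is the cover of $x_i$ carrying the smallest $L$-label, and $x_r$ is maximal; then $\partial L(x_i)=L(x_{i+1})-1$ for $i<r$, $\partial L(x_r)=n$, and $\partial L(y)=L(y)-1$ for $y$ off the chain. Fix $j$ with $1\le j\le\min(\mu(L)+1,n)$, set $U=L^{-1}(\{n-j+2,\dots,n\})$ --- an up-set because $j-1\le\mu(L)$ --- and set $V=(\partial L)^{-1}(\{n-j+1,\dots,n\})$; the goal is to show $V$ is an up-set. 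Chasing labels through the slide: an element off the chain lies in $V$ iff it lies in $U$; a chain element $x_i$ with $i<r$ lies in $V$ iff $x_{i+1}\in U$; and $x_r\in V$ always. Since $U$ is an up-set and $x_1\lessdot\cdots\lessdot x_r$ is a chain, once the chain enters $U$ it stays in $U$, so there is a unique $s$ with $x_i\notin U$ for $i<s$ and $x_i\in U$ for $i\ge s$; substituting this into the description of $V$ collapses it to
\[
V\ =\ U\cup\{x_{s-1}\},
\]
with the convention $x_{s-1}=x_r$ when no chain element lies in $U$.

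The crux --- the step I expect to need the most care --- is showing that $U\cup\{x_{s-1}\}$ is an up-set, and this is exactly where the slide's selection rule pays off. If $s\le r$, then $x_s\in U$ means $L(x_s)\ge n-j+2$; because $x_s$ is the \emph{smallest}-labeled cover of $x_{s-1}$, every cover of $x_{s-1}$ has $L$-label $\ge n-j+2$ and therefore lies in $U$, and since every element strictly above $x_{s-1}$ lies above some cover of $x_{s-1}$ and $U$ is an up-set, everything above $x_{s-1}$ lies in $U$, so $U\cup\{x_{s-1}\}$ is an up-set. If $s=r+1$ then $x_{s-1}=x_r$ is maximal and there is nothing to check. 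This establishes the inequality, hence the theorem. What remains is routine and low-risk: checking the boundary cases ($j=1$, and $\mu(L)\in\{0,n\}$ --- the case $\mu(L)=n$ just re-derives that $\partial$ preserves natural labelings) and the bookkeeping in reducing $V$ to $U\cup\{x_{s-1}\}$.
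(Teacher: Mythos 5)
Your potential $\mu(L)$ is exactly the number of \emph{frozen} elements $|\F(L)|$ in the sense of Section 2, and the inequality $\mu(\partial L)\ge\min(\mu(L)+1,n)$ is exactly \cref{lemma:frozen-set} (Defant--Kravitz's Lemma 2.6), which the paper identifies as the key tool in their proof of \cref{theorem:defant-kravitz-main}. So your overall strategy --- monotone potential, plus the observation that the value $n-1$ is unattainable --- is the same as the source's, and the reduction of $V$ to $U\cup\{x_{s-1}\}$ and the endgame are all correct.

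There is, however, one genuine error: you have misdescribed the promotion operator. In \cref{definition:extended-promotion}, $y$ is the smallest-labeled element among \emph{all} elements satisfying $y>_P x$, not the smallest-labeled \emph{cover} of $x$; the promotion chain is a chain but need not be saturated. These two rules define different operators. On the $3$-chain $a\lessdot b\lessdot c$ with $L(a)=1$, $L(b)=3$, $L(c)=2$, the paper's rule swaps $a$ with $c$ directly and $\partial L$ is already natural, whereas your cover rule slides $1$ through $b$ and produces the non-natural labeling $(2,1,3)$. As written, your crux step (``every cover of $x_{s-1}$ has label $\ge n-j+2$, hence everything above $x_{s-1}$ lies in $U$'') is an argument about the wrong operator. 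Fortunately the fix is immediate and actually simplifies matters: with the correct rule, $x_s$ minimizes $L$ over the entire set $\{z: z>_P x_{s-1}\}$ (none of whose labels have yet been disturbed when the slide reaches $x_{s-1}$), so $x_s\in U$ forces every element strictly above $x_{s-1}$ to have label at least $n-j+2$ and hence to lie in $U$ in one step, with no appeal to covers. All other parts of your label-chasing (off-chain elements shift by one, $\partial L(x_i)=L(x_{i+1})-1$, $\partial L(x_r)=n$, and ``once the chain enters $U$ it stays'') survive unchanged for non-saturated chains. With that correction the proof is complete.
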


When applied to an arbitrary poset labeling, extended promotion will always result in a natural labeling after a maximum of $n-1$ applications. Applied to a natural labeling of a poset, the extended promotion will always produce another natural labeling. Defant and Kravitz \cite{defant-kravitz-2023} define a tangled labeling of an $n$-element poset as a labeling that requires $n-1$ promotions to give a natural labeling. Intuitively, the tangled labelings of a poset are those that are furthest from being ``sorted" by extended promotion; they require the full $n-1$ applications of extended promotion in \cref{theorem:defant-kravitz-main}. Defant and Kravitz studied the number of tangled labelings of a poset and conjectured the following upper bound on the number of tangled labelings.

\begin{conj}[{\cite[Conjecture 5.1]{defant-kravitz-2023}}] \label{conj:main-conjecture}
    An $n$-element poset has at most $(n-1)!$ tangled labelings.
\end{conj}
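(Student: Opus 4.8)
The plan is to prove that an $n$-element poset $P$ has at most $(n-1)!$ tangled labelings by finding an injection from the set of tangled labelings of $P$ into some naturally $(n-1)!$-sized indexing set. The most natural target is the set of linear extensions of an $(n-1)$-element poset, or more ambitiously a set of permutations in $S_{n-1}$. My first step would be to recall the mechanics of extended promotion: given a labeling $L$ of $P$, one locates the element carrying label $1$, deletes it (or rather "promotes" by a standard sliding procedure analogous to jeu de taquin), and the key combinatorial object governing whether $L$ is tangled is the trajectory of small labels under iterated promotion. I would want a clean characterization: $L$ is tangled if and only if a certain "promotion chain" visits a full chain of $n-1$ comparabilities, or equivalently if the sequence of elements vacated has a prescribed interaction pattern with the order relation.

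Next, I would try to attach to each tangled labeling $L$ a canonical piece of data living in a set of size $(n-1)!$. A promising candidate: the order in which the labels $1, 2, \ldots, n-1$ "resolve" or the sequence of locations that promotion touches. Concretely, since $\partial^{n-1}L$ is natural but $\partial^{n-2}L$ is not, there is a distinguished element — the one whose label only becomes correct at the very last step — and one can peel off information recursively. I would set up an induction on $n$: delete a maximal element or the element labeled $n$ (which in $\partial^{n-1}L$ must be a maximal element), restrict the labeling to the remaining $(n-1)$-element subposet, and argue that this restriction, suitably standardized, is either tangled or close to tangled in the smaller poset, giving at most $(n-2)!$ choices; then bound the number of ways to "re-insert" by $n-1$. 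Multiplying gives $(n-1)!$. The subtlety is that extended promotion on $P$ does not simply restrict to extended promotion on a subposet, so I would need a lemma controlling how the promotion dynamics of $P$ project onto a deletion.

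The step I expect to be the main obstacle is exactly this compatibility: understanding how tangledness of $L$ on $P$ relates to tangledness (or the promotion-distance) of an induced labeling on $P$ minus one element. Extended promotion is a global operation — sliding label $1$ out can cascade arbitrarily far through the Hasse diagram — so a naive restriction argument fails. I would try to circumvent this by choosing the deleted element very carefully: not an arbitrary maximal element, but the element $x$ that receives the largest label under $L$ itself, or the element whose label is slowest to stabilize, so that its removal genuinely commutes with the first several promotion steps. An alternative route, if the induction resists, is to bound tangled labelings directly by a sign-reversing or weight-preserving argument comparing them to linear extensions, using the Defant--Kravitz machinery relating promotion orbits to the toggle/rowmotion picture; but I suspect the clean proof is the inductive injection, and the real work is isolating the right "safe" element to delete and proving a one-step compatibility lemma for promotion under that deletion.

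Given the difficulty of the general case, I would not be surprised if the full conjecture is only established here for special families (inflated rooted forests, shoelace posets, as the abstract indicates), with the general statement left open; so in practice the "proof proposal" splits into (i) the structural reductions above applied to forests, where the Hasse diagram's tree-like shape makes the promotion cascade trackable, and (ii) a separate ad hoc analysis for shoelace posets exploiting their explicit description. For forests the inductive deletion of a leaf should make the compatibility lemma genuinely provable, since sliding out label $1$ in a forest has limited branching.
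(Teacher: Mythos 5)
The statement you are trying to prove is a conjecture: neither this paper nor, to date, any other source establishes it for general posets, and your proposal does not close that gap either. The step you yourself identify as the main obstacle --- a lemma saying that extended promotion on $P$ is compatible with deletion of a well-chosen element, so that tangledness of $L$ controls the sorting time of a standardized restriction on an $(n-1)$-element subposet --- is never constructed, and it is exactly where the difficulty lives. Promotion chains can cascade through arbitrary parts of the Hasse diagram, so no ``safe'' element to delete is known; in particular, deleting the element labeled $n$ does not work naively, since after removal the promotion dynamics of the restricted labeling genuinely differ from those of $L$. As written, your argument proves nothing beyond what it assumes.

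Where the paper does make partial progress, the decomposition is different from yours and worth comparing. Rather than deleting an element and inducting on $n$, the paper partitions $\T(P)$ according to which element $x$ carries the label $n-1$ (\cref{equation:tangled-refinement}) and conjectures $|\T_x(P)| \le (n-2)!$ for every $x$ (\cref{conj:(n-1)-refinement}); since no tangled labeling can place $n-1$ on a minimal element, summing over the at most $n-1$ remaining elements yields the $(n-1)!$ bound. This refinement is then verified for inflated rooted forests (\cref{theorem:inflated_rooted}) via a probabilistic count of where the label $1$ of $L_{n-2}$ can land, and for shoelace posets (\cref{thm:shoelace}) by direct case analysis --- not by a deletion induction. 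The closest the paper comes to your ``peel off the element with the largest label'' idea is \cref{corollary:tangled-labeling-n-(n-1)-locations}: for tangled $L$, the element $L^{-1}(n)$ must be a basin, which already gives $|\T(P)|\le (n-1)!$ whenever $P$ has a unique basin. If you want to pursue your program, that proposition is the natural starting point, but the one-step compatibility lemma you need is precisely what is missing from the literature.
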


Defant and Kravitz proved an enumerative formula for a large class of posets known as \definition{inflated rooted forest posets} (see \cref{section:inflated-rooted-forests} for details). This formula was used by Hodges to show \cref{conj:main-conjecture} holds for all inflated rooted forest posets. Furthermore, Hodges conjectured a stronger version of \cref{conj:main-conjecture}.

\begin{conj}[{\cite[Conjecture 31]{hodges-2022}}]
    \label{conjecture:hodges}
An $n$-element poset with $m$ minimal elements has at most $(n-m)(n-2)!$ tangled labelings.
\end{conj}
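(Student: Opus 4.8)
The plan is to prove \cref{conjecture:hodges} by refining the count of tangled labelings according to which element carries the label $n-1$. For a poset $P$ on $n$ elements write $\mathcal{T}(P)$ for its set of tangled labelings, and for $x \in P$ set $\mathcal{T}_x = \{\, L \in \mathcal{T}(P) : L(x) = n-1 \,\}$, so that $\mathcal{T}(P) = \bigsqcup_{x \in P} \mathcal{T}_x$. I would deduce \cref{conjecture:hodges} from two claims: (a) if $x$ is a minimal element of $P$ then $\mathcal{T}_x = \varnothing$; and (b) $|\mathcal{T}_x| \le (n-2)!$ for every $x \in P$. Granting both, summing over the at most $n-m$ non-minimal elements gives $|\mathcal{T}(P)| = \sum_{x} |\mathcal{T}_x| \le (n-m)(n-2)!$. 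When $m=1$ this already reproves \cref{conj:main-conjecture}, and the $n$-element chain (more generally, a chain with one incomparable minimal element adjoined) shows that the $\mathcal{T}_x$ can attain $(n-2)!$ on every non-minimal $x$ while being empty on every minimal $x$, so the pair (a)--(b) is the natural sharp target; I expect it to be essentially the strengthening alluded to in the abstract.

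For (a) I would isolate the following feature of tangled labelings and then read (a) off from it: \emph{if $L \in \mathcal{T}(P)$, then $L^{-1}(n)$ is a minimal element of $P$ and $L^{-1}(n-1) > L^{-1}(n)$ in $P$.} The second assertion forces $L^{-1}(n-1)$ to lie strictly above another element, hence to be non-minimal, which is exactly (a). To prove the feature I would track the first applications of $\partial$ using the description of extended promotion as the left-to-right sweep of elementary toggles $\tau_1, \dots, \tau_{n-1}$, where $\tau_i$ swaps the carriers of the labels $i$ and $i+1$ unless the element labeled $i$ lies below the one labeled $i+1$. Since $\tau_1, \dots, \tau_{i-2}$ leave the label $i$ and its carrier untouched, one can follow the carriers of the two largest labels: if $x$ were minimal and labeled $n-1$, then $\tau_{n-2}$ would necessarily move that label off $x$ (lowering $x$'s label to $n-2$), and inductively $(\partial^{k}L)(x) = n-1-k$ for $0 \le k \le n-2$; one would then combine this with an analysis of the label-$n$ carrier to conclude that $\partial^{n-2}L$ is already natural, contradicting tangledness. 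I expect (a) to be the more tractable part, as it involves only the two top labels.

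For (b) a first attempt is to observe that once $x$ and $y := L^{-1}(n)$ are fixed, the assignment $L \mapsto \bigl(y,\ \text{the linear order } L \text{ induces on } P \setminus \{x,y\}\bigr)$ is injective; since $y$ is minimal by the feature above, this yields only the weaker bound $|\mathcal{T}_x| \le m\,(n-2)!$. The substantive content of (b) is therefore to shrink this to a single copy of $(n-2)!$ --- for instance by showing that for at most one minimal $y$ can a given linear order on the other $n-2$ elements be completed to a tangled labeling, or by an induction on $n$ relating $\partial$ on $P$ to $\partial$ on $P \setminus \{x\}$ or $P \setminus \{y\}$. This is where I expect the real obstacle: I do not see a uniform argument, and the realistic route is to establish (b) class by class --- for \emph{inflated rooted forest posets} by specializing the explicit enumerative formula of Defant and Kravitz (the formula Hodges used to settle \cref{conj:main-conjecture} for that family) and extracting the contribution of each choice of $x$, and for further families such as \emph{shoelace posets} by a direct analysis of the promotion dynamics. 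Thus the plan delivers \cref{conjecture:hodges} for such structured families, while the general case stays open precisely because the fiber sizes $|\mathcal{T}_x|$ are not controlled for arbitrary $P$.
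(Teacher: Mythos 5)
Your decomposition is precisely the paper's: your claims (a) and (b) together constitute the $(n-2)!$ conjecture (\cref{conj:(n-1)-refinement}), with (a) following from \cref{lemma:tangled-labelings-characterization} and \cref{cor:(n-r)-inequality-when-tangled}, and the paper likewise establishes (b) only for inflated rooted forest posets and shoelace posets, so that \cref{conjecture:hodges} remains open in general exactly as you say. One caution on your sketch of (a): extended promotion on an arbitrary labeling is \emph{not} the toggle sweep $\tau_{n-1}\cdots\tau_1$ (the label $1$ repeatedly swaps with the \emph{smallest} label above it, skipping intermediate values), so the argument should instead run through \cref{lemma:labels-slide-down} and the chain of inequalities in \cref{Inequals for label n-1}, as the paper does.
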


Both \cite{defant-kravitz-2023} and \cite{hodges-2022} also considered counting labelings by the number of extended promotion steps needed to yield a natural labeling. In the preprint of \cite{defant-kravitz-2023}, Defant and Kravitz proposed the following, listed as Conjecture 5.2 in \cite{defant2020promotionsorting}. Hodges further examined this conjecture.

\begin{conj}[{\cite[Conjecture 29]{hodges-2022}}]\label{false}
Let $P$ be an $n$-element poset, and let $a_k(P)$ denote the number of labelings of $P$ requiring exactly $k$ applications of the extended promotion to be a natural labeling. Then the sequence $a_0(P),\dotsc,a_{n-1}(P)$ is unimodal.
\end{conj}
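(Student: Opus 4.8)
The plan is to confront the statement directly before deciding whether to prove or disprove it. Write $a_k=a_k(P)$, so that $\sum_{k=0}^{n-1}a_k=n!$, the value $a_0$ is the number of natural labelings of $P$ (equivalently, its number of linear extensions, since $\partial$ preserves naturality), and $a_{n-1}$ is the number of tangled labelings. The standard toolkit for proving that such a nonnegative integer sequence is unimodal would be: (i) exhibit an order-raising operator on labelings that increases sorting time by exactly one and is injective for $k$ below the peak, together with a matching order-lowering operator above the peak; (ii) prove the stronger statement that $(a_k)$ is log-concave, via a combinatorial interpretation of $a_k a_{k+2}$ versus $a_{k+1}^2$; or (iii) show the generating polynomial $\sum_k a_k(P)\,q^k$ has only real roots. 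I would first look for (i), trying to build the operator out of a controlled relabeling of $P$ — conjugation by an adjacent transposition, or an extra application of $\partial$ composed with a correction — whose interaction with one promotion step can be tracked.

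The obstacle is that extended promotion is very far from invertible, and its dynamics toward the natural labelings have no visible symmetry or nesting: there is no reason the fibers $\{L:\text{sorting time}(L)=k\}$ should grow and then shrink, and sorting-time statistics for noninvertible sorting operators are notoriously prone to failures of unimodality. For this reason I would not invest in a proof before an exhaustive computer check: for every poset $P$ on $n\le 7$ or $8$ elements, iterate $\partial$ on each of the $n!$ labelings, record the sorting times, and test $(a_k(P))_k$ for unimodality.

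My expectation is that this search produces a counterexample, so the honest deliverable is a disproof: a specific poset $P$, its vector $(a_0(P),\dots,a_{n-1}(P))$, and an interior index $j$ with $a_j(P)<a_{j-1}(P)$ and $a_j(P)<a_{j+1}(P)$. A natural place to hunt is among posets that differ from an antichain by a small amount of comparability: there $a_0$ is enormous, forcing the sequence to be weakly decreasing, while the comparabilities can inflate the number of tangled labelings $a_{n-1}$ relative to the nearly-sorted counts $a_{n-2},a_{n-3}$, opening a valley near the right end. I would present the smallest such $P$ found, and thereby refute the conjecture as stated.

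Finally, I would isolate the statistic that \emph{is} well-behaved. The cumulative counts $A_k(P)=a_0(P)+\dots+a_k(P)$ — the number of labelings sorted within $k$ steps — are automatically nondecreasing with $A_{n-1}(P)=n!$; the substantive assertion is their log-concavity. I would try to establish this for a family in which the action of $\partial$ admits a clean block-by-block description, the ordinal sum of antichains being the obvious candidate, and then read off from that analysis whatever refinement of the weak order on $S_n$ emerges, matching the program announced in the abstract.
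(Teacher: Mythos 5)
You correctly read the situation: the statement is false, and the paper's treatment of it is a refutation, not a proof. The paper (\cref{rmk:false}) exhibits the six-element poset $P = T_2 \oplus T_2 \oplus T_2$, whose sorting generating function $f_P(q) = 8 + 64q + 216q^2 + 192q^3 + 240q^4$ has the interior valley $a_2 = 216 > a_3 = 192 < a_4 = 240$. Your proposed deliverable --- a specific poset, its vector $(a_0,\dotsc,a_{n-1})$, and an interior index $j$ with $a_j < a_{j-1}$ and $a_j < a_{j+1}$ --- is exactly the right format, and your closing paragraph (pass to the cumulative counts, prove log-concavity for ordinal sums of antichains, extract a refinement of the weak order) is precisely the program the paper carries out in \cref{sec:ordinalsum}.

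The gap is that you never deliver the counterexample. ``My expectation is that this search produces a counterexample'' is a plan, not a disproof; as written, the proposal establishes nothing about the conjecture either way. The missing content is the whole of the argument here: one must name a poset and compute its sorting-time distribution. Two smaller points. First, your heuristic for where to hunt (posets close to an antichain, with $a_0$ enormous and the valley opening near the right end because the tangled count $a_{n-1}$ is inflated) points away from the actual example: $T_2 \oplus T_2 \oplus T_2$ has $a_0 = 8$, a valley in the middle of the range, and $a_{n-1} = 0$ --- indeed $T_k \oplus Q$ has no tangled labelings whenever $k \ge 2$, so any poset with two incomparable minimal elements of this form is disqualified from your proposed mechanism. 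Second, the paper does not find its example by brute force over all posets but by first proving a transfer theorem (\cref{thm:attach_k_minimal_gf}) expressing $f_{T_k \oplus Q}$ in terms of $f_Q$ via the matrix $X_n(k)$, which makes the ordinal sums of antichains exactly computable; this is why the counterexample and the positive log-concavity result live in the same family. Your exhaustive-search route would also succeed for $n \le 6$, but it would not by itself yield the structural follow-up you sketch.
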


\subsection{Outline and Summary of Main Results}

In this paper, we study the number of tangled labelings of posets by partitioning tangled labelings according to which poset element has label $n-1$. We propose the following new conjecture.

\begin{conj}[{[The $(n-2)!$ Conjecture]}]
    \label{conj:(n-1)-refinement}
   Let $P$ be an $n$-element poset with $n \ge 2$. For all $x \in P$, let $|\T_x(P)|$ denote the number of tangled labelings of $P$ such that $x$ is labeled $n-1$. Then $|\T_x(P)| \le (n-2)!$ with equality if and only if there is a unique minimal element $y \in P$ such that $y <_P x$.
\end{conj}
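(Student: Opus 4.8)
The plan is to reduce the $(n-2)!$ conjecture to two structural lemmas about where the top two labels sit in a tangled labeling. Fix an $n$-element poset $P$ with $n\ge 2$ and an element $x\in P$. The key input is the following \emph{necessity lemma}: if $L$ is tangled and $L(x)=n-1$, then $L^{-1}(n)$ is a minimal element of $P$ lying strictly below $x$. Granting this, the conjecture splits into three cases. If $x$ is minimal there is no minimal element below $x$, so $\T_x(P)=\varnothing$ and $|\T_x(P)|=0<(n-2)!$. If $x$ has a unique minimal element $y<_P x$, then every $L\in\T_x(P)$ has $L(y)=n$, hence is determined by its restriction to $P\setminus\{x,y\}$, an arbitrary bijection onto $\{1,\dots,n-2\}$; this injection gives $|\T_x(P)|\le(n-2)!$, and for the reverse inequality I would prove the matching \emph{sufficiency lemma}: whenever $y$ is the unique minimal element below $x$ and $L(x)=n-1$, $L(y)=n$, the labeling $L$ is tangled. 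Together these force $|\T_x(P)|=(n-2)!$. Finally, if $x$ has $r\ge 2$ minimal elements $y_1,\dots,y_r$ below it, the necessity lemma exhibits $\T_x(P)$ as a disjoint union over $i$ of the tangled $L$ with $L(y_i)=n$, and the task is to show the total is strictly less than $(n-2)!$; in all small examples I have checked (e.g.\ the poset $a<b$, $a<c$, or the diamond $a<c$, $b<c$, $c<d$) these pieces are in fact empty. Summing the three cases over $x$ recovers Hodges's bound $(n-m)(n-2)!$ of \cref{conjecture:hodges}, with equality exactly when every non-minimal element of $P$ has a unique minimal element below it.

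\textbf{Proving the lemmas, and the obstacle.} For the necessity and sufficiency lemmas I would use the description of extended promotion as a composition of extended Bender--Knuth toggles together with \cref{theorem:defant-kravitz-main}, and track how iterated promotion enlarges the natural part of $L$ --- the largest initial segment of labels $\{1,\dots,j\}$ occupying an order ideal, and dually the largest final segment on an order filter --- the heuristic being that the tangled labelings are precisely those for which this natural part grows as slowly as possible, one label per promotion, and that forcing $n$ onto a minimal element below $L^{-1}(n-1)$ is exactly what produces the slowest growth. The delicate point, and the reason \cref{conj:(n-1)-refinement} is hard in general, is that no clean characterization of tangled labelings of an arbitrary poset is known: the examples show that ``$L^{-1}(n)$ is minimal and below $L^{-1}(n-1)$'' is necessary but not sufficient to be tangled (for instance $(L(a),L(b),L(c),L(d))=(4,1,2,3)$ on the diamond is not tangled), so I only expect the case analysis above to close when $P$ is structured enough to make the promotion dynamics fully predictable.

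\textbf{The structured families.} For inflated rooted forest posets I would combine the reduction with the Defant--Kravitz enumerative formula for their tangled labelings --- the one Hodges used to verify \cref{conj:main-conjecture} for this class --- refined according to which element carries the label $n-1$: in a rooted forest the order ideal below any $x$ is a union of hanging subtrees, so ``$x$ has a unique minimal element below it'' is a transparent condition, and both the sufficiency lemma and the strict inequality in the multiple-minima case should be readable off the formula. For the shoelace posets, whose narrow structure keeps the promotion orbits small, I would instead compute the sorting times in closed form, partitioned by where $n-1$ lands, and verify the two inequalities directly.
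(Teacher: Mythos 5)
First, a point of framing: the statement you are proving is a \emph{conjecture} that the paper itself leaves open in general. The paper establishes it only for inflated rooted forest posets (\cref{theorem:inflated_rooted}) and shoelace posets (\cref{thm:shoelace}), plus computer verification through nine elements. So the relevant comparison is between your plan and the paper's partial results. Your reduction is essentially the paper's own Section 2 machinery. Your ``necessity lemma'' is \cref{lemma:tangled-labelings-characterization} combined with \cref{cor:(n-r)-inequality-when-tangled} (and the paper sharpens it in \cref{corollary:tangled-labeling-n-(n-1)-locations}: $L^{-1}(n)$ must be a \emph{basin}, not just a minimal element below $x$ --- this is why your diamond example has no tangled labelings at all). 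Your ``sufficiency lemma'' is exactly \cref{lemma:sufficient-condition-tangled}: if $y$ is the unique minimal element below $x$ then $y$ is a basin with $x\in\funnel(y)$, so every labeling with $L(y)=n$, $L(x)=n-1$ is tangled. Your Cases 1 and 2 (minimal $x$, and unique minimal element below $x$) therefore go through verbatim and match the paper.

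The genuine gap is your third case, and your small examples have misled you about its nature. When $x$ has two or more minimal elements below it, the sets $\{L\in\T_x(P): L(y_i)=n\}$ are \emph{not} empty in general. The diamond and the $\Lambda$-poset are degenerate: in the diamond no minimal element is a basin, so there are no tangled labelings whatsoever, which is why the pieces vanish. For a non-degenerate example take the $W$-poset $W_{a,b,c,d}$: the element $y$ has both minimal elements $x$ and $z$ strictly below it, and the proof of \cref{thm:w-tangled} shows $|\T_y(W_{a,b,c,d})| = (n-2)! - a!\,b!\,c!\,d!\,(X+Z)$, which is strictly positive (already $90$ out of a possible $120$ for $W_{1,1,1,1}$). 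The entire content of the conjecture beyond your Cases 1 and 2 is proving the \emph{strict} inequality $|\T_x(P)|<(n-2)!$ for such $x$, and your plan offers no mechanism for it: ``the pieces are empty'' is false, and ``track how the natural part grows'' is precisely the uncontrolled dynamics you yourself identify as the obstacle. (Also, extended promotion on arbitrary labelings is not known to factor into Bender--Knuth toggles; the paper works instead with frozen elements and \cref{lemma:tangled-funnel-condition}.) Where the paper does close this case, it needs genuinely new input: for shoelaces, the equivalence-class argument in Case 4 of \cref{thm:shoelace} showing at most $(s-1)!$ tangled labelings per class with at least one deficient class; for inflated rooted forests, the probabilistic refinement \cref{theorem:x-labeling-inflated-rooted-forest-formula} together with the inductive inequality \cref{lemma:inflation-inequality-lemma} and the sharpness analysis in \cref{remark:irfp-strict-ineq}. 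Your proposed route for inflated rooted forests (refine the Defant--Kravitz formula by the location of $n-1$) is indeed the paper's route, but the refinement and the accompanying inequality are the hard part, not a corollary of the existing formula.
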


By results in \cref{section:promotion-properties}, both \cref{conj:main-conjecture} and \cref{conjecture:hodges} follow from the $(n-2)!$ conjecture. In \cref{theorem:inflated_rooted} and \cref{thm:shoelace}, we prove that that the $(n-2)!$ conjecture holds for inflated rooted forest posets and for a new class of posets that we call \emph{shoelace posets}. Furthermore, the conjecture has been computationally verified on all posets with nine or fewer elements.

Following \cite{hodges-2022}, we also consider the sorting time for labelings that are not tangled and introduce associated generating functions. In \cref{rmk:false}, we give a poset on six elements that is a counterexample to \cref{false}. Our results completely determine the generating functions for ordinal sums of antichains. We introduce a related generating function called the \emph{cumulative generating function} and prove log-concavity of the cumulative generating function for ordinal sums of antichains.

In \cref{section:promotion-properties} we review the basic properties of extended promotion. In \cref{section:inflated-rooted-forests} we prove that inflated rooted forest posets satisfy the $(n-2)!$ conjecture. In \cref{section:shoelace-posets} we prove that inflated shoelace posets satisfy the $(n-2)!$ conjecture and give an exact enumeration for the number of tangled labelings of a particular type of shoelace poset called a $W$-poset. In \cref{section:generating-functions} we study the generating function of the sorting time of labelings of the ordinal sum of a poset $P$ with the antichain $T_k$ on $k$ elements. In \cref{sec:ordinalsum} we show that the cumulative generating function for ordinal sums of antichains are log-concave and use the cumulative generating functions to introduce a new partial order on the symmetric group $\mathfrak{S}_n$. In \cref{section:future-work} we propose future directions to explore.

\section{Definitions and Properties of Extended Promotion}\label{section:promotion-properties}

In this section, we review and prove some properties of the extended promotion operator that will be used in later sections. Many of the definitions and results in this section come from \cite{defant-kravitz-2023} and are cited appropriately.

\subsection{Notation and Terminology}

The notation $[n]$ denotes the set $\{1, 2, \ldots, n\}$. For a partially ordered set (or poset) $P$, the partial order on $P$ will be denoted $<_P$. An element $y \in P$ is said to \definition{cover} $x \in P$, denoted $x \lessdot_P y$, if $x<_P y$ and there does not exist an element $z \in P$ such that $x <_P z <_P y$. A \definition{lower (resp. upper) order ideal} of $P$ is a set $X \subseteq P$ with the property that if $y \in X$ and $x <_P y$ (resp. $x >_P y$) then $x \in X$ also. For an element $y \in P$, the \definition{principal lower order ideal} of $y$ is denoted $\downarrow y = \{x \in P : x \le_P y\}$. A poset $P$ is said to be \definition{connected} if its Hasse diagram is a connected graph. In this paper, we only consider finite posets and assume the reader is familiar with standard results on posets as can be found in \cite[Chapter 3]{ECI}. 

A \definition{labeling} of a poset $P$ with $n$ elements is a bijection from $P$ to $[n]$.  A labeling $L$ of $P$ is a \definition{natural labeling} if the sequence $L^{-1}(1), L^{-1}(2), \ldots, L^{-1}(n)$ is a linear extension of $P$. Equivalently, for any elements $x,y \in P$, if $x<_P y$ then $L(x)<L(y)$. Given a poset $P$, the set of all labelings of $P$ will be denoted $\Lambda(P)$. The set of all natural
labelings (equivalently, linear extensions) of $P$ will be denoted $\mathcal{L}(P)$.

\begin{defn}[{\cite[Definition 2.1]{defant-kravitz-2023}}]
    \label{definition:extended-promotion}
    Let $P$ be an $n$-element poset and $L \in \Lambda(P)$. The \definition{extended promotion of $L$}, denoted $\partial L$, is obtained from $L$ by the following algorithm:
    \begin{enumerate}
        \item Repeat until the element labeled 1 is maximal: Let $x$ be the element labeled 1 and let $y$ be the element with the smallest label such that $y >_P x$. Swap the labels of $x$ and $y$.
        \item Simultaneously replace the label 1 with $n$ and replace the label $i$ with $i-1$ for all $i > 1$.
    \end{enumerate}
\end{defn}

In what follows, we will refer to extended promotion simply as 
\definition{promotion}. For $i \ge 0$, the notations $L_i$ and $\partial^i L$ are used interchangeably to denote the $i$th promotion of $L$. By convention, $L_0$ and $\partial^0 L$ denote the original labeling $L$. Promotion can be loosely thought of as ``sorting" a labeling $L$ so that 
$\partial L$ is closer to being a natural labeling. 
\begin{defn}({\cite[Section 2]{defant-kravitz-2023}}\footnote{We remark that promotion chains were first defined by Stanley \cite{stanley-2009} in the context of promotion on natural labelings.})
Let $L \in \Lambda(P)$. The \definition{promotion chain} of $L$ is the ordered set of elements of $P$ whose labels are swapped in the first step of \cref{definition:extended-promotion}. The order of the promotion chain is the order in which the labels were swapped in the first step of \cref{definition:extended-promotion}.
\end{defn}

\begin{example}\label{example:promotion}
\cref{figure:one-promotion} shows the promotion algorithm applied to a labeling $L$ of a 6-element poset $P$. The promotion chain of $L$ is the ordered sequence $L^{-1}(1), L^{-1}(2), L^{-1}(5)$. A sequence of five promotions of $L$ is shown in \cref{figure:full-promotion}. Observe that $L_i$ is not a natural labeling for $0 \le i < 5$ but $L_5$ is a natural labeling.
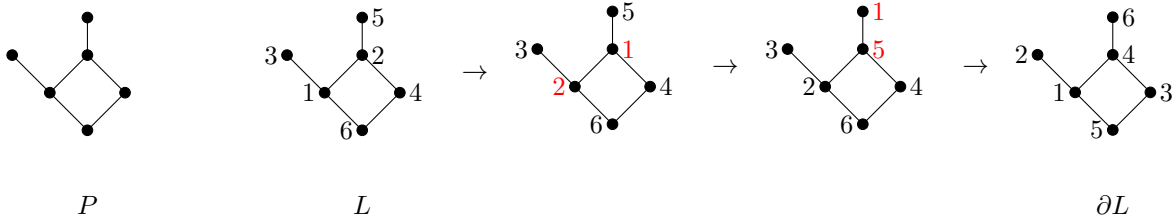
\begin{figure}[htb!]
\begin{center}
     \begin{tikzpicture}
        \draw (0,0) -- (-0.5,0.5) -- (0,1) -- (0.5,0.5) -- (0,0);
        \draw (-0.5,0.5) -- (-1,1);
        \draw (0,1) -- (0,1.5);
        \filldraw[black] (0,0) circle (2pt) node [anchor=east] {};
        \filldraw[black] (-0.5,0.5) circle (2pt) node [anchor=east] {};
        \filldraw[black] (0,1) circle (2pt) node [anchor=west] {};
        \filldraw[black] (0.5,0.5) circle (2pt) node [anchor=west] {};
        \filldraw[black] (-1,1) circle (2pt) node [anchor=east] {};
        \filldraw[black] (0,1.5) circle (2pt) node [anchor=west] {};
        \node at (0, -1) {$P$};
        \node at (2,0.75) {};
    \end{tikzpicture}
    \begin{tikzpicture}
        \draw (0,0) -- (-0.5,0.5) -- (0,1) -- (0.5,0.5) -- (0,0);
        \draw (-0.5,0.5) -- (-1,1);
        \draw (0,1) -- (0,1.5);
        \filldraw[black] (0,0) circle (2pt) node [anchor=east] {$6$};
        \filldraw[black] (-0.5,0.5) circle (2pt) node [anchor=east] {$1$};
        \filldraw[black] (0,1) circle (2pt) node [anchor=west] {$2$};
        \filldraw[black] (0.5,0.5) circle (2pt) node [anchor=west] {$4$};
        \filldraw[black] (-1,1) circle (2pt) node [anchor=east] {$3$};
        \filldraw[black] (0,1.5) circle (2pt) node [anchor=west] {$5$};
        \node at (0, -1) {$L$};
        \node at (1.5,0.75) {$\rightarrow$};
    \end{tikzpicture}
    \begin{tikzpicture}
        \draw (0,0) -- (-0.5,0.5) -- (0,1) -- (0.5,0.5) -- (0,0);
        \draw (-0.5,0.5) -- (-1,1);
        \draw (0,1) -- (0,1.5);
        \filldraw[black] (0,0) circle (2pt) node [anchor=east] {$6$};
        \filldraw[black] (-0.5,0.5) circle (2pt) node [anchor=east] {$\textcolor{red}{2}$};
        \filldraw[black] (0,1) circle (2pt) node [anchor=west] {$\textcolor{red}{1}$};
        \filldraw[black] (0.5,0.5) circle (2pt) node [anchor=west] {$4$};
        \filldraw[black] (-1,1) circle (2pt) node [anchor=east] {$3$};
        \filldraw[black] (0,1.5) circle (2pt) node [anchor=west] {$5$};
        \node at (0, -1.2) {};
        \node at (1.5,0.75) {$\rightarrow$};
    \end{tikzpicture}
    \begin{tikzpicture}
        \draw (0,0) -- (-0.5,0.5) -- (0,1) -- (0.5,0.5) -- (0,0);
        \draw (-0.5,0.5) -- (-1,1);
        \draw (0,1) -- (0,1.5);
        \filldraw[black] (0,0) circle (2pt) node [anchor=east] {$6$};
        \filldraw[black] (-0.5,0.5) circle (2pt) node [anchor=east] {$2$};
        \filldraw[black] (0,1) circle (2pt) node [anchor=west] {$\textcolor{red}{5}$};
        \filldraw[black] (0.5,0.5) circle (2pt) node [anchor=west] {$4$};
        \filldraw[black] (-1,1) circle (2pt) node [anchor=east] {$3$};
        \filldraw[black] (0,1.5) circle (2pt) node [anchor=west] {$\textcolor{red}{1}$};
        \node at (0, -1.2) {};
        \node at (1.5,0.75) {$\rightarrow$};
    \end{tikzpicture}
    \begin{tikzpicture}
        \draw (0,0) -- (-0.5,0.5) -- (0,1) -- (0.5,0.5) -- (0,0);
        \draw (-0.5,0.5) -- (-1,1);
        \draw (0,1) -- (0,1.5);
        \filldraw[black] (0,0) circle (2pt) node [anchor=east] {$5$};
        \filldraw[black] (-0.5,0.5) circle (2pt) node [anchor=east] {$1$};
        \filldraw[black] (0,1) circle (2pt) node [anchor=west] {$4$};
        \filldraw[black] (0.5,0.5) circle (2pt) node [anchor=west] {$3$};
        \filldraw[black] (-1,1) circle (2pt) node [anchor=east] {$2$};
        \filldraw[black] (0,1.5) circle (2pt) node [anchor=west] {$6$};
        \node at (0, -1) {$\partial L$};
    \end{tikzpicture}
    \caption{One promotion of the labeling $L$ on poset $P$.  Swapped labels are shown in red.}

    \label{figure:one-promotion}
    \end{center}
\end{figure}

\begin{defn}[{\cite[Section 1.1]{defant-kravitz-2023}}]\label{def.order}
    Let $P$ be an $n$-element poset and $L \in \Lambda(P)$. The \definition{order} or \definition{sorting time} of $L$, denoted $\order(L)$, is the smallest integer $k \ge 0$ such that $L_k \in \L(P)$. If $\order(L)=n-1$, then $L$ is a \definition{tangled labeling}. The set of all tangled labelings of $P$ is denoted $\T(P)$.
\end{defn}
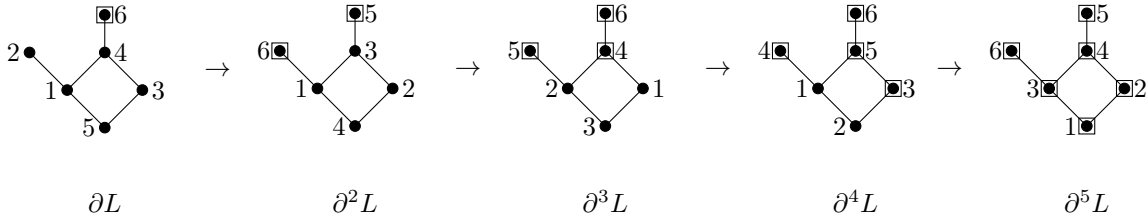
\begin{figure}[htb!]
\begin{center}
    \begin{tikzpicture}
        \draw (0,0) -- (-0.5,0.5) -- (0,1) -- (0.5,0.5) -- (0,0);
        \draw (-0.5,0.5) -- (-1,1);
        \draw (0,1) -- (0,1.5);
        \filldraw[black] (0,0) circle (2pt) node [anchor=east] {$5$};
        \filldraw[black] (-0.5,0.5) circle (2pt) node [anchor=east] {$1$};
        \filldraw[black] (0,1) circle (2pt) node [anchor=west] {$4$};
        \filldraw[black] (0.5,0.5) circle (2pt) node [anchor=west] {$3$};
        \filldraw[black] (-1,1) circle (2pt) node [anchor=east] {$2$};
        \filldraw[black] (0,1.5) circle (2pt) node [anchor=west] {6};
        \node[draw, minimum size=0.5em, inner sep=3pt] at (0,1.5) { };
        \node at (0, -1) {$\partial L$};
        \node at (1.5,0.75) {$\rightarrow$};
    \end{tikzpicture}
    \begin{tikzpicture}
        \draw (0,0) -- (-0.5,0.5) -- (0,1) -- (0.5,0.5) -- (0,0);
        \draw (-0.5,0.5) -- (-1,1);
        \draw (0,1) -- (0,1.5);
        \filldraw[black] (0,0) circle (2pt) node [anchor=east] {$4$};
        \filldraw[black] (-0.5,0.5) circle (2pt) node [anchor=east] {$1$};
        \filldraw[black] (0,1) circle (2pt) node [anchor=west] {$3$};
        \filldraw[black] (0.5,0.5) circle (2pt) node [anchor=west] {$2$};
        \filldraw[black] (-1,1) circle (2pt) node [anchor=east] {$6$};
        \filldraw[black] (0,1.5) circle (2pt) node [anchor=west] {$5$};
        \node[draw, minimum size=0.5em, inner sep=3pt] at (0,1.5) { };
        \node[draw, minimum size=0.5em, inner sep=3pt] at (-1,1) { };
        \node at (0, -1) {$\partial^2 L$};
        \node at (1.5,0.75) {$\rightarrow$};
    \end{tikzpicture}
    \begin{tikzpicture}
        \draw (0,0) -- (-0.5,0.5) -- (0,1) -- (0.5,0.5) -- (0,0);
        \draw (-0.5,0.5) -- (-1,1);
        \draw (0,1) -- (0,1.5);
        \filldraw[black] (0,0) circle (2pt) node [anchor=east] {$3$};
        \filldraw[black] (-0.5,0.5) circle (2pt) node [anchor=east] {$2$};
        \filldraw[black] (0,1) circle (2pt) node [anchor=west] {$4$};
        \filldraw[black] (0.5,0.5) circle (2pt) node [anchor=west] {$1$};
        \filldraw[black] (-1,1) circle (2pt) node [anchor=east] {$5$};
        \filldraw[black] (0,1.5) circle (2pt) node [anchor=west] {$6$};
        \node[draw, minimum size=0.5em, inner sep=3pt] at (0,1.5) { };
        \node[draw, minimum size=0.5em, inner sep=3pt] at (-1,1) { };
        \node[draw, minimum size=0.5em, inner sep=3pt] at (0,1) { };
        \node at (0, -1) {$\partial^3 L$};
        \node at (1.5,0.75) {$\rightarrow$};
    \end{tikzpicture}
    \begin{tikzpicture}
        \draw (0,0) -- (-0.5,0.5) -- (0,1) -- (0.5,0.5) -- (0,0);
        \draw (-0.5,0.5) -- (-1,1);
        \draw (0,1) -- (0,1.5);
        \filldraw[black] (0,0) circle (2pt) node [anchor=east] {$2$};
        \filldraw[black] (-0.5,0.5) circle (2pt) node [anchor=east] {$1$};
        \filldraw[black] (0,1) circle (2pt) node [anchor=west] {$5$};
        \filldraw[black] (0.5,0.5) circle (2pt) node [anchor=west] {$3$};
        \filldraw[black] (-1,1) circle (2pt) node [anchor=east] {$4$};
        \filldraw[black] (0,1.5) circle (2pt) node [anchor=west] {$6$};
        \node[draw, minimum size=0.5em, inner sep=3pt] at (0,1.5) { };
        \node[draw, minimum size=0.5em, inner sep=3pt] at (-1,1) { };
        \node[draw, minimum size=0.5em, inner sep=3pt] at (0,1) { };
        \node[draw, minimum size=0.5em, inner sep=3pt] at (0.5,0.5) { };
        \node at (0, -1) {$\partial^4 L$};
        \node at (1.25,0.75) {$\rightarrow$};
    \end{tikzpicture}
    \begin{tikzpicture}
        \draw (0,0) -- (-0.5,0.5) -- (0,1) -- (0.5,0.5) -- (0,0);
        \draw (-0.5,0.5) -- (-1,1);
        \draw (0,1) -- (0,1.5);
        \filldraw[black] (0,0) circle (2pt) node [anchor=east] {$1$};
        \filldraw[black] (-0.5,0.5) circle (2pt) node [anchor=east] {$3$};
        \filldraw[black] (0,1) circle (2pt) node [anchor=west] {$4$};
        \filldraw[black] (0.5,0.5) circle (2pt) node [anchor=west] {$2$};
        \filldraw[black] (-1,1) circle (2pt) node [anchor=east] {$6$};
        \filldraw[black] (0,1.5) circle (2pt) node [anchor=west] {$5$};
        \node[draw, minimum size=0.5em, inner sep=3pt] at (0,1.5) { };
        \node[draw, minimum size=0.5em, inner sep=3pt] at (-1,1) { };
        \node[draw, minimum size=0.5em, inner sep=3pt] at (0,1) { };
        \node[draw, minimum size=0.5em, inner sep=3pt] at (0.5,0.5) { };
        \node[draw, minimum size=0.5em, inner sep=3pt] at (-0.5,0.5) { };
        \node[draw, minimum size=0.5em, inner sep=3pt] at (0,0) { };
        \node at (0, -1) {$\partial^5 L$};
    \end{tikzpicture}
    \caption{Promotions of the labeling $L$ in \cref{figure:one-promotion}. Elements enclosed in a box are frozen.} 
    \label{figure:full-promotion}      
    \end{center}
\end{figure}
\end{example}
\begin{defn}
Let $P$ be an $n$-element poset and $x \in P$. A labeling $L$ of $P$ is said to be an \definition{$x$-labeling} if $L(x) = n-1$. The set of all tangled $x$-labelings of $P$ is denoted $\T_x(P)$.
\end{defn}

For a poset $P$, the set of tangled labelings $\T(P)$ is the disjoint union of $\T_x(P)$ as $x$ ranges over elements in $P$. Thus, the number of tangled labelings of $P$ is equal to the sum
\begin{equation}
    \label{equation:tangled-refinement}
    |\T(P)| = \sum_{x \in P} |\T_x(P)|.
\end{equation}
It readily follows from \cref{equation:tangled-refinement} that the $(n-2)!$ conjecture implies \cref{conj:main-conjecture}.

\begin{defn}[{\cite[Section 1.3]{defant-kravitz-2023}}]
\label{standard-def}
Let $P$ be an $n$-element poset, $Q$ be an $m$-element subposet of $P$, and $L \in \Lambda(P)$. The \definition{standardization} of $L$ on $Q$ is the unique labeling $\st(L): Q \to [m]$ such that $\st(L)(x) < \st(L)(y)$ if and only if $L(x) < L(y)$ for all $x,y \in Q$.
\end{defn} 

\begin{defn}[{\cite[Section 2]{defant-kravitz-2023}}]
Let $P$ be an $n$-element poset and $x \in P$. The element $x$ is said to be \definition{frozen} with respect to a labeling $L \in \Lambda(P)$ if $L^{-1}(\{a, a+1, \ldots, n\})$ is an upper order ideal for every $a$ such that $L(x) \le a \le n$. The set of frozen elements of $L$ will be denoted $\F(L)$.
\end{defn}

Equivalently, if $x$ is frozen, then the standardization of $L$ on the subposet $L^{-1}(\{L(x), L(x)+1, \ldots, n\})$ is a natural labeling. Thus, $L$ is a natural labeling of $P$ if and only if $\F(L)=P$.
Observe that by \cref{definition:extended-promotion}, for any labeling $L$ of an $n$-element poset $P$, the element labeled $n$ in $\partial L$ is a maximal element of $P$.
More  generally, $L^{-1}_{j+1}(n-j)$ is frozen, so the elements of $P$ with labels
$\{n-j,n-j+1,\ldots, n\}$ are ``sorted.'' The standardization of $L_{j+1}$ on the subposet of $P$ whose elements have $L_{j+1}$-labels in
$\{n-j,n-j+1,\ldots, n\}$ is a natural labeling.

\begin{example}
    In \cref{figure:full-promotion}, the frozen elements of each labeling are enclosed in boxes. Observe that once an element is frozen, it remains frozen in subsequent promotions. \cref{figure:standardization} shows a subposet $Q$ and the standardization of the labeling $L$ in \cref{figure:one-promotion} on $Q$.
    \begin{figure}[!h]
        \centering
        \begin{tikzpicture}
            \draw (0,0) -- (-0.5,0.5) -- (0,1) -- (0.5,0.5) -- (0,0);
            \draw (-0.5,0.5) -- (-1,1);
            \draw (0,1) -- (0,1.5);
            \draw[dashed] (-0.9, -0.3) -- (-0.9, 1.3) -- (0.9, 1.3) -- (0.9, -0.3) -- (-0.9, -0.3);
            \filldraw[black] (0,0) circle (2pt) node [anchor=east] {$4$};
            \filldraw[black] (-0.5,0.5) circle (2pt) node [anchor=east] {$1$};
            \filldraw[black] (0,1) circle (2pt) node [anchor=west] {$2$};
            \filldraw[black] (0.5,0.5) circle (2pt) node [anchor=west] {$3$};
            \filldraw[black] (-1,1) circle (2pt) node [anchor=east] {};
            \filldraw[black] (0,1.5) circle (2pt) node [anchor=west] {};
            % \node at (0, -1) {$L$};
            % \node at (1.5,0.75) {$\rightarrow$};
        \end{tikzpicture}
        \caption{The standardization of the labeling $L$ in \cref{figure:one-promotion} on the subposet in the dotted box.}
        \label{figure:standardization}
    \end{figure}
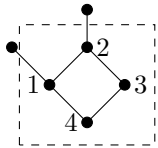
\end{example}

We conclude this subsection by introducing funnels and basins. The basin elements of a poset are a subset of its minimal elements. In \cref{corollary:tangled-labeling-n-(n-1)-locations}, we will see that for tangled labelings, basins are the appropriate subset of minimal elements to pay attention to.

\begin{defn}
Let $x \in P$ be a minimal element. The  \definition{funnel} of $x$ is
\[
    \funnel(x) = \{y \in P : \text{$x <_P y$ and $x$ is the unique minimal element in $\downarrow y$}\}.
\]
\end{defn}

\begin{defn}
    A minimal element $x \in P$ is a \definition{basin} if $\funnel(x) \neq \varnothing$.
\end{defn}

\begin{example}
Let $P$ be the poset with the labeling $L$ in \cref{fig:funnel}. The basin elements in $P$ are $g$ and $i$. Their funnels are $\funnel(g) = \{d\}$ and $\funnel(i) = \{f, c\}$, respectively. There are two basins $g, i$ in the lower order ideal $\downarrow a$ and a single basin $g$ in the lower order ideal $\downarrow b$.
\end{example}
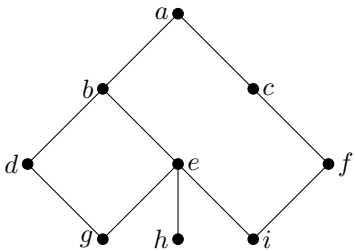
\begin{figure}[ht]
\centering
\begin{tikzpicture}
            \draw (0,0) -- (0,1) -- (-1,0) -- (-2,1) -- (0,3) -- (2,1) -- (1,0) -- (0,1);
            \draw (0,1) -- (-1,2);
            \filldraw[black] (0,0) circle (2pt) node [anchor=east] {$h$};
            
            \filldraw[black] (0,1) circle (2pt) node [anchor=west] {$e$};
            \filldraw[black] (0,3) circle (2pt) node [anchor=east] {$a$};
            \filldraw[black] (-1,0) circle (2pt) node [anchor=east] {$g$};
            \filldraw[black] (-1,2) circle (2pt) node [anchor=east] {$b$};
            \filldraw[black] (-2,1) circle (2pt) node [anchor=east] {$d$};
            \filldraw[black] (1,0) circle (2pt) node [anchor=west] {$i$};
            \filldraw[black] (1,2) circle (2pt) node [anchor=west] {$c$};
            \filldraw[black] (2,1) circle (2pt) node [anchor=west] {$f$};
        \end{tikzpicture}
\caption{A poset with two basin elements $g$ and $i$.}
\label{fig:funnel}
\end{figure}

In the terminology of this section, Defant's and Kravitz's characterization of tangled labelings is as follows.

\begin{thm}[{\cite[Theorem~2.10]{defant-kravitz-2023}}] A poset $P$ has a tangled labeling if and only if $P$ has a basin.
\end{thm}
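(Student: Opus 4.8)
The plan is to reduce the whole statement to understanding the single labeling $L_{n-2} = \partial^{n-2}L$. First I would record the elementary equivalence that $L$ is tangled if and only if $L_{n-2} \notin \L(P)$: by \cref{theorem:defant-kravitz-main} one always has $\order(L)\le n-1$; since promotion carries natural labelings to natural labelings, once an iterate $L_j$ is natural all later ones are as well; hence $\order(L)=n-1$ exactly when $L_{n-2}$ fails to be natural. Both directions then become statements about $M := L_{n-2}$.

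For the forward direction, assume $L$ is tangled, so $M \notin \L(P)$. By the sorting remark following the definition of frozen elements (with $j=n-3$), the elements $M^{-1}(3),\dots,M^{-1}(n)$ are frozen, so $|\F(M)|\ge n-2$ and every unfrozen element of $M$ is $M^{-1}(1)$ or $M^{-1}(2)$. I would then observe that the frozen set $\F(M)$ is an upper order ideal — if $y\ge_P x$ and $x$ is frozen, then $M^{-1}(\{M(x),\dots,n\})$ is an upper order ideal containing $x$, hence $y$, forcing $M(y)>M(x)$ and making $y$ frozen — so the unfrozen elements form a lower order ideal $D$ with $1\le|D|\le 2$. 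A short case analysis, comparing $M$-labels along covers and using that elements outside $D$ are frozen, rules out $|D|\le 1$ and the possibility that $D$ is an antichain, leaving $D=\{M^{-1}(2),M^{-1}(1)\}$ with $M^{-1}(2)<_P M^{-1}(1)$. Setting $w=M^{-1}(2)$ and $z=M^{-1}(1)$: $w$ is minimal, $w<_P z$, and since $D$ is a lower order ideal every minimal element below $z$ lies in $D$ and hence equals $w$; thus $z\in\funnel(w)$ and $w$ is a basin.

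For the converse, suppose $x$ is a basin and let $z$ be a $<_P$-minimal element of $\funnel(x)$. The structural point I would establish first is that $\downarrow z=\{x,z\}$ (so in particular $x\lessdot_P z$): any $v<_P z$ satisfies $x\le_P v$ because $x$ is the unique minimal element of $\downarrow z\ni v$, and if $v\ne x$ then $\downarrow v\subseteq\downarrow z$ gives $v\in\funnel(x)$ with $v<_P z$, contradicting minimality of $z$. Now pick any labeling $L$ with $L(x)=n$ and $L(z)=n-1$ (such an $L$ exists since a basin forces $n\ge 2$). I claim and would verify by induction that for $0\le j\le n-3$ neither $x$ nor $z$ lies in the promotion chain of $L_j$, so that $L_j(x)=n-j$ and $L_j(z)=n-1-j$: the minimal element $x$ can appear in a promotion chain only as its first entry, which would require $L_j(x)=1$, impossible since its label stays $\ge 3$; and because $\downarrow z=\{x,z\}$, the element $z$ can enter a chain only first (again needing label $1$) or immediately after $x$, both excluded. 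Hence $L_{n-2}(x)=2>1=L_{n-2}(z)$ while $x<_P z$, so $L_{n-2}\notin\L(P)$ and $L$ is tangled.

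The hard part will be the forward direction's structural analysis of $M=L_{n-2}$: showing that exactly the two smallest-labeled elements are unfrozen and that they form a covering pair with the smaller label above. Once that is in hand, extracting the basin, and the entire converse construction, are just bookkeeping with promotion chains.
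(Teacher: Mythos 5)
Your argument is correct. Note that the paper does not prove this theorem itself---it is quoted from Defant and Kravitz---but the surrounding machinery of \cref{section:promotion-properties} amounts to a proof: \cref{corollary:tangled-labeling-n-(n-1)-locations} gives the forward direction (every tangled labeling places $n$ on a basin) and \cref{lemma:sufficient-condition-tangled} gives the converse. Your forward direction is essentially the same argument as the proof of \cref{corollary:tangled-labeling-n-(n-1)-locations}: both hinge on the facts that $L_{n-2}^{-1}(3),\dots,L_{n-2}^{-1}(n)$ are frozen and that the frozen set is an upper order ideal, so the unfrozen elements form a two-element lower order ideal $\{L_{n-2}^{-1}(1),L_{n-2}^{-1}(2)\}$ whose bottom element is a basin; your case analysis eliminating $|D|\le 1$ and the antichain case does go through. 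Your converse is a genuinely more self-contained variant of \cref{lemma:sufficient-condition-tangled}: by insisting that $z$ be a $<_P$-minimal element of $\funnel(x)$, you obtain $\downarrow z = \{x,z\}$ and can track the labels of $x$ and $z$ through all $n-2$ promotions directly, whereas the paper's lemma handles an arbitrary $z\in\funnel(x)$ at the cost of invoking the inequality chain \cref{Inequals for label n-1} (i.e.\ \cref{lemma:labels-slide-down}). The trade-off is that the paper's version establishes the stronger fact that \emph{every} labeling with $L(x)=n$ and $L^{-1}(n-1)\in\funnel(x)$ is tangled (which is reused later, e.g.\ in \cref{thm:shoelace}), while yours produces only the single witness that the existence statement requires.
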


\subsection{Properties of Extended Promotion}
In this subsection, we provide some general lemmas on extended promotion and tangled labelings. We begin with a lemma implicit in  \cite{defant-kravitz-2023} that gives a useful criterion for checking whether or not a labeling is tangled.

\begin{lem}
\label{lemma:tangled-labelings-characterization}
    Let $P$ be a poset on $n$ elements and $L \in \Lambda(P)$. The labeling $L$ is tangled if and only if both of the following conditions are met:
    \begin{enumerate}
        \item $L^{-1}(n)$ is minimal in $P$,
        \item $L^{-1}(n) <_P L_{n-2}^{-1}(1)$.
    \end{enumerate}
\end{lem}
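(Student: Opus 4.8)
The plan is to unpack the definition of tangled ($\order(L) = n-1$) in terms of when the element labeled $n$ becomes frozen, and to use the observation already recorded in the excerpt that $L_{j+1}^{-1}(n-j)$ is frozen after $j+1$ promotions. The key structural fact is that $L$ fails to be a natural labeling precisely when $\F(L) \neq P$, and that freezing proceeds ``from the top down'': after one promotion the label $n$ sits on a maximal element (hence is frozen), after two promotions the labels $\{n-1,n\}$ occupy a two-element upper order ideal, and so on. So $\order(L) = k$ exactly when the first $k$ promotions are each strictly necessary, i.e. when for each $j < k$ the labeling $L_j$ is not yet natural. The heart of the matter is to pin down the ``last obstruction'': $L$ is tangled iff $L_{n-2}$ is still not a natural labeling, i.e. iff $\order(L) > n-2$. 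Since $\order(L) \le n-1$ always, $L$ is tangled iff $L_{n-2} \notin \mathcal{L}(P)$.

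From here I would argue as follows. First, condition (1): for $\partial L$ to eventually reach a natural labeling in the maximum number of steps, the element that receives label $n$ in $\partial L$ — which is always a maximal element of $P$ — must in fact be $L^{-1}(n)$ itself getting ``stuck.'' More precisely, I would show that $L^{-1}(n)$ is frozen with respect to $L$ (equivalently, maximal in $P$) if and only if $\order(L) \le n-2$: if $L^{-1}(n)$ is maximal, then already the subposet on the single element labeled $n$ is sorted, so the freezing front has a head start and $n-2$ promotions suffice; conversely if $L^{-1}(n)$ is not maximal, the very first promotion must move label $n$ onto a genuinely new maximal element, costing a full promotion that a natural-from-the-start analysis shows cannot be recovered, forcing $\order(L) = n-1$. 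This gives that a tangled labeling must satisfy (1), and that under (1) we have $\order(L) \in \{n-2, n-1\}$.

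Second, assuming (1) holds, I would distinguish $\order(L) = n-1$ from $\order(L) = n-2$ using condition (2). After $n-2$ promotions, the labels $\{2, 3, \ldots, n\}$ — that is, all but the smallest — occupy an upper order ideal (this is the $j = n-2$ case of the frozen-element observation, since $L_{n-2}^{-1}(n-j)$ for the relevant range is frozen), so $L_{n-2}$ is a natural labeling if and only if the single element labeled $1$ in $L_{n-2}$ is minimal in $P$. Thus $L$ is tangled iff $L_{n-2}^{-1}(1)$ is \emph{not} minimal. The remaining step is to identify this non-minimality with condition (2), i.e. $L^{-1}(n) <_P L_{n-2}^{-1}(1)$. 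The forward direction uses that by (1) the element $L^{-1}(n) = L_{n-1}^{-1}(1)$ is the unique minimal element sitting below everything in the relevant funnel after the sorting has concentrated the small labels there; tracking the promotion chains shows that if $L_{n-2}^{-1}(1)$ is not minimal, the only element it can lie above (within the frozen upper ideal structure) is $L^{-1}(n)$. The converse is the easier direction: if $L^{-1}(n) <_P L_{n-2}^{-1}(1)$ then $L_{n-2}^{-1}(1)$ is manifestly non-minimal, so $L_{n-2} \notin \mathcal{L}(P)$.

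The main obstacle is the careful bookkeeping in the second-to-last step: showing that under condition (1), the element carrying label $1$ after $n-2$ promotions is non-minimal \emph{only} via lying above $L^{-1}(n)$, rather than above some other minimal element. This requires understanding precisely which elements label $1$ can visit over the course of $n-2$ promotions — essentially that once $L^{-1}(n)$ is pinned as a maximal-but-targeted element, the promotion chains are forced to thread through its lower order ideal — and this is exactly the kind of analysis ``implicit in \cite{defant-kravitz-2023}'' that the lemma statement alludes to. I would expect to extract it from the structure of promotion chains together with the freezing dynamics, possibly by induction on the number of promotions, comparing the trajectory of label $1$ in $L$ against that in a natural labeling obtained by relabeling.
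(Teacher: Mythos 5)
There is a genuine gap, and it is in your treatment of condition (1). The lemma's condition (1) is that $L^{-1}(n)$ is \emph{minimal} in $P$, but your first main step instead tries to prove that $L^{-1}(n)$ is \emph{maximal} if and only if $\order(L)\le n-2$, and in particular that ``if $L^{-1}(n)$ is not maximal \dots{} forcing $\order(L)=n-1$.'' That claim is false: on the $3$-chain $a<_P b<_P c$ with $L(a)=1$, $L(b)=3$, $L(c)=2$, the element $L^{-1}(3)=b$ is neither maximal nor minimal, yet one promotion already produces a natural labeling, so $\order(L)=1=n-2$ and $L$ is not tangled. Being non-maximal is far weaker than being minimal, so your argument never establishes the actual condition (1). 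The necessity of minimality of $L^{-1}(n)$ for a tangled labeling is a genuinely nontrivial fact; the paper does not reprove it but cites \cite[Lemma 3.8]{defant-kravitz-2023}. Moreover, the sufficiency direction of the lemma really needs minimality: the key observation is that a minimal element can only enter a promotion chain as its first element, so its label simply decrements by $1$ at each promotion, giving $L_{n-2}^{-1}(2)=L^{-1}(n)$; condition (2) then says $L_{n-2}^{-1}(2)<_P L_{n-2}^{-1}(1)$, so $L_{n-2}$ is not natural and $L$ is tangled. You use the identity $L^{-1}(n)=L_{n-1}^{-1}(1)$ in passing, which implicitly relies on this, but your stated framework (maximality/freezing of $L^{-1}(n)$) does not deliver it.

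The second half of your outline is essentially on the right track and close to the paper's necessity argument: since $L_{n-2}^{-1}(3),\dots,L_{n-2}^{-1}(n)$ are frozen (by iterating \cref{lemma:frozen-set}), $L$ is tangled iff $L_{n-2}^{-1}(1)$ is not minimal, and the element below it must be $L_{n-2}^{-1}(2)=L^{-1}(n)$. However, you flag the identification of that lower element as the ``main obstacle'' and leave it unresolved; in fact it follows in one line from the observation that the frozen elements form an upper order ideal, so any $z<_P L_{n-2}^{-1}(1)$ cannot be frozen (else $L_{n-2}^{-1}(1)$ would be frozen too), and the only non-frozen element other than $L_{n-2}^{-1}(1)$ itself is $L_{n-2}^{-1}(2)$. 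No tracking of the trajectory of label $1$ across $n-2$ promotions is needed.
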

\begin{proof}
    First, we will prove that conditions $(1)$ and $(2)$ together are sufficient for $L$ to be tangled. Let $x$ denote $L^{-1}(n)$. By condition $(1)$, $x$ is minimal so $L_{i+r}(x) = L_{i} (x) - r$ whenever $L_{i}(x) > r$. Since $L(x) = n$, it follows that $L_{n-2}(x) = 2$ and hence $L_{n-2}^{-1}(2) = L^{-1}(n)$. Substituting into condition $(2)$ yields $ L_{n-2}^{-1}(2) <_P L_{n-2}^{-1}(1)$. Thus, $L_{n-2}$ is not yet sorted, and so $L$ is tangled.

    By \cite[Lemma 3.8]{defant-kravitz-2023}, condition $(1)$ is necessary for $L$ to be tangled. Thus, it remains to show that condition $(2)$ follows from assuming that $L$ is tangled and that condition $(1)$ holds. By \cite[Lemma 2.7]{defant-kravitz-2023}, $L_{n-2}^{-1}(3), \ldots, L_{n-2}^{-1}(n)$ are frozen with respect to $L_{n-2}$. Since $L$ is tangled, $L_{n-2}$ is not sorted, which may occur only if $L_{n-2}^{-1}(2) <_P L_{n-2}^{-1}(1)$. Because $L^{-1}(n)$ is minimal, we may substitute $L^{-1}(n) = L_{n-2}^{-1}(2)$ to yield condition $(2)$.
\end{proof}

As a consequence of (2), the element labeled $n-1$ cannot be minimal in a tangled labeling of $P$. If an $n$-element poset $P$ has $m$ minimal elements, then \cref{conj:(n-1)-refinement} would imply that the number of tangled labelings of $P$ is at most $(n-m)(n-2)!$. Therefore, \cref{conj:(n-1)-refinement} implies \cref{conjecture:hodges} and hence \cref{conj:main-conjecture}.

\begin{lem}
    \label{lemma:labels-slide-down}
    Let $P$ be a poset on $n$ elements and $L \in \Lambda(P)$. Then for all $2 \le i \le n$ and $0 \le j \le n-1$,
    \[L_{j+1}^{-1}(i-1) \le_P L_{j}^{-1}(i).\]
    %Then for all $1 \le i \le n$ and $0 \le j \le i-1$, $(L_j)^{-1}(i-j) \le L^{-1}(i)$.
\end{lem}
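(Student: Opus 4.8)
The plan is to unwind the two-step definition of promotion (\cref{definition:extended-promotion}) and track which element carries each label. Fix $j$ and abbreviate $M = L_j$ and $M' = L_{j+1} = \partial M$. Let $x_0 <_P x_1 <_P \cdots <_P x_t$ be the promotion chain of $M$, so that $x_0 = M^{-1}(1)$ and Step~1 of \cref{definition:extended-promotion} consists of successively swapping the labels of $x_k$ and $x_{k+1}$ for $k = 0, 1, \ldots, t-1$. Write $\widetilde M$ for the labeling obtained after Step~1 but before the global relabeling in Step~2. Tracing the swaps gives $\widetilde M(x_k) = M(x_{k+1})$ for $0 \le k \le t-1$, $\widetilde M(x_t) = 1$, and $\widetilde M(z) = M(z)$ for every element $z$ not on the chain.

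Step~2 sends the label $1$ to $n$ and decreases every other label by $1$. Consequently, for every $i$ with $2 \le i \le n$, the element receiving label $i-1$ under $M'$ is exactly the element carrying label $i$ under $\widetilde M$; that is, $(M')^{-1}(i-1) = \widetilde M^{-1}(i)$. So the statement to be proved reduces to the inequality
\[
  \widetilde M^{-1}(i) \le_P M^{-1}(i) \qquad (2 \le i \le n).
\]

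To establish this, I would split into two cases depending on whether $M^{-1}(i)$ lies on the promotion chain. If it does not, then $\widetilde M$ and $M$ assign the value $i$ to the same element, so the inequality holds with equality. If $M^{-1}(i) = x_k$ for some $k$, then $k \ge 1$, because $M(x_0) = 1 < i$; from $\widetilde M(x_{k-1}) = M(x_k) = i$ we get $\widetilde M^{-1}(i) = x_{k-1}$, and $x_{k-1} <_P x_k = M^{-1}(i)$ by the definition of the promotion chain. In either case $\widetilde M^{-1}(i) \le_P M^{-1}(i)$, and hence $L_{j+1}^{-1}(i-1) = (M')^{-1}(i-1) = \widetilde M^{-1}(i) \le_P M^{-1}(i) = L_j^{-1}(i)$.

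I do not anticipate a genuine obstacle: the argument is pure bookkeeping of how the two steps of promotion relabel elements. The only point requiring slight care is the observation that, for $i \ge 2$, the element $M^{-1}(i)$ cannot be the bottom element $x_0$ of the chain (which carries label $1$), which is precisely what guarantees that the predecessor $x_{k-1}$ exists when $M^{-1}(i)$ does lie on the chain.
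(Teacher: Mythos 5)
Your proof is correct and follows essentially the same route as the paper's: split on whether $L_j^{-1}(i)$ lies on the promotion chain, note that for $i\ge 2$ it cannot be the first chain element, and in the on-chain case identify $L_{j+1}^{-1}(i-1)$ with the immediately preceding chain element. The only difference is that you make the intermediate labeling $\widetilde M$ and the label-shift in Step~2 explicit, which the paper leaves implicit.
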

\begin{proof}
If $i$ is not the label of an element in the promotion chain of $L_j$, then the element $L_j^{-1}(i)$ will be labeled $i-1$ in $L_{j+1}$, so $L_{j+1}^{-1}(i-1) = L_j^{-1}(i)$. If $L_j^{-1}(i)$ is in the promotion chain of $L_j$, let $x$ denote the element immediately preceding $L_j^{-1}(i)$ in the promotion chain of $L_j$. Such an element exists since $i \ge 2$ so $L_j^{-1}(i)$ cannot be the first element in the promotion chain. It follows that $L_{j+1}^{-1}(i-1) = x \le_P L_j^{-1}(i)$.
\end{proof}

A consequence of \cref{lemma:labels-slide-down} is that for all $2\le i\le n$,
    \begin{equation}
        L_{i-1}^{-1}(1)\le_P\ldots\le_P L_1^{-1}(i-1)\le_P L^{-1}(i).
    \end{equation}
    Setting $i=n-1$ gives, in particular,\begin{equation}\label{Inequals for label n-1}
    L_{n-2}^{-1}(1)\leq_P L_{n-3}^{-1}(2) \leq_P \ldots \leq_P L_1^{-1}(n-2)\leq_P L^{-1}(n-1).
\end{equation}

\begin{cor} %Corollary 2.15
\label{cor:(n-r)-inequality-when-tangled}
    Let $P$ be a poset on $n$ elements and let $L \in \L(P)$ be a tangled labeling. For $r = 0, 1, \ldots, n-2$,
    \begin{equation*}
        L_r^{-1}(n-r) <_P L_r^{-1}(n-1-r).
    \end{equation*}
    In particular, $L^{-1}(n) <_P L^{-1}(n-1)$.
\end{cor}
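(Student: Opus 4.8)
The plan is to deduce the corollary directly from the two results just proved. The first step is to pin down how the maximum label moves: write $x := L^{-1}(n)$, so condition (1) of \cref{lemma:tangled-labelings-characterization} says $x$ is minimal in $P$. I would then observe that a minimal element is never the element $y$ in a swap of Step (1) of \cref{definition:extended-promotion} (such a $y$ must lie strictly above some element), and it is the swapped-in element labeled $1$ only once its label has already decreased to $1$; consequently $x$ lies in no promotion chain of $L_0, L_1, \dots, L_{n-2}$, so its label decrements by exactly one at each of these promotions. This yields $L_r^{-1}(n-r) = x$ for every $0 \le r \le n-2$, and reduces the claim to showing $x <_P L_r^{-1}(n-1-r)$.

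For the second step I would invoke the chain \eqref{Inequals for label n-1}, the specialization of \cref{lemma:labels-slide-down} at $i = n-1$. In each of its terms $L_s^{-1}(t)$ one has $s + t = n-1$, so $L_r^{-1}(n-1-r)$ occurs in that chain and every term to its left is $\le_P$ it; in particular $L_{n-2}^{-1}(1) \le_P L_r^{-1}(n-1-r)$ for all $0 \le r \le n-2$. The third step is simply to quote condition (2) of \cref{lemma:tangled-labelings-characterization}, namely $x = L^{-1}(n) <_P L_{n-2}^{-1}(1)$. Concatenating the three relations gives $L_r^{-1}(n-r) = x <_P L_{n-2}^{-1}(1) \le_P L_r^{-1}(n-1-r)$, which is the asserted strict inequality; taking $r = 0$ recovers the ``in particular'' statement $L^{-1}(n) <_P L^{-1}(n-1)$.

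This is essentially a bookkeeping argument, so there is no serious obstacle; the one place that deserves a careful sentence is the claim that the label of $x$ drops by exactly one through the first $n-2$ promotions. That claim is essentially the computation already made inside the proof of \cref{lemma:tangled-labelings-characterization}, and I would either cite it or reproduce the one-line induction: $x$ is in no promotion chain while its label exceeds $1$, hence $L_{j+1}(x) = L_j(x) - 1$ whenever $L_j(x) > 1$, and therefore $L_j(x) = n - j$ for all $0 \le j \le n-1$.
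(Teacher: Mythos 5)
Your proof is correct and follows essentially the same route as the paper: both concatenate $L_r^{-1}(n-r) \mathrel{R} L^{-1}(n) <_P L_{n-2}^{-1}(1) \le_P L_r^{-1}(n-1-r)$, using condition (2) of \cref{lemma:tangled-labelings-characterization} and the chain \eqref{Inequals for label n-1}. The only (harmless) difference is that you establish the first link as an equality $L_r^{-1}(n-r) = L^{-1}(n)$ via minimality of $L^{-1}(n)$, whereas the paper only needs the inequality $L_r^{-1}(n-r) \le_P L^{-1}(n)$ supplied directly by \cref{lemma:labels-slide-down}.
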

\begin{proof}
By \cref{lemma:labels-slide-down}, $L_r^{-1}(n-r) \le_P L^{-1}(n)$, and by (2) in Lemma \ref{lemma:tangled-labelings-characterization},  $L^{-1}(n) <_P L_{n-2}^{-1}(1)$.
Additionally, by \cref{Inequals for label n-1}, $L_{n-2}^{-1}(1) \le_P L_{n-3}^{-1}(2) \le_P \cdots \le_P L_{r}^{-1}(n-1-r)$. Combining these inequalities yields the desired result $L_r^{-1}(n-r) <_P L_r^{-1}(n-1-r)$. If we set $r=0$, then we see that $L^{-1}(n) <_P L^{-1}(n-1)$.
\end{proof}

In \cite[Corollary 3.7]{defant-kravitz-2023}, Defant and Kravitz showed that any poset with a unique minimal element satisfies \cref{conj:main-conjecture}. We strengthen this result to show that posets with any number of minimal elements---but only one basin---also satisfy \cref{conj:main-conjecture}. We will need the following lemma that is the key tool in Defant and Kravitz's proof of \cref{theorem:defant-kravitz-main}. 
\begin{lem}[{\cite[Lemma~2.6]{defant-kravitz-2023}}]
   \label{lemma:frozen-set} %2.16
    Let $P$ be an $n$-element poset and let $L \in \Lambda(P) \setminus \L(P)$. Then $\F(L) \subsetneq \F(\partial L)$.
\end{lem}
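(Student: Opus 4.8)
The plan is to keep track, for a labeling $M$ of $P$, of the threshold
\[
c(M) \;=\; 1 + \max\bigl\{\, a : M^{-1}([a,n]) \text{ is not an up-set of } P \,\bigr\},
\]
where $[a,n] := \{a,a+1,\ldots,n\}$ and the maximum is taken to be $0$ if no such $a$ exists. An element $x$ is frozen with respect to $M$ precisely when $M^{-1}([a,n])$ is an up-set for every $a$ with $M(x)\le a\le n$, so $\F(M)=M^{-1}([c(M),n])$; in particular $\F(M)$ is itself an up-set, $|\F(M)|=n+1-c(M)$, and $c(M)=1$ if and only if $M\in\L(P)$. It thus suffices to establish (i) $\F(L)\subseteq\F(\partial L)$ and (ii) $c(\partial L)\le c(L)-1$, since (ii) forces $|\F(\partial L)|>|\F(L)|$, which together with (i) gives the strict inclusion.

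The key step is the claim: \emph{if $2\le a\le n+1$ and $L^{-1}([a,n])$ is an up-set, then $(\partial L)^{-1}([a-1,n])$ is an up-set.} To prove it I would describe $(\partial L)^{-1}([a-1,n])$ explicitly via the promotion chain $v_0<_P v_1<_P\cdots<_P v_\ell$ of $L$. Here $v_0=L^{-1}(1)$, each $v_{i+1}$ is (by construction) the element of smallest $L$-label lying strictly above $v_i$, and $v_\ell$ is maximal in $P$; since $v_{i+1}>_P v_{i-1}$, the labels increase along the chain, $1=L(v_0)<L(v_1)<\cdots<L(v_\ell)$. Writing $L'$ for the labeling obtained after Step~1 of \cref{definition:extended-promotion}, one has $L'(v_i)=L(v_{i+1})$ for $0\le i<\ell$, $L'(v_\ell)=1$, and $L'=L$ off the chain; hence $L'(z)\ge L(z)$ for all $z\ne v_\ell$, and Step~2 gives $\partial L(z)=L'(z)-1$ for $z\ne v_\ell$ and $\partial L(v_\ell)=n$. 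A short computation with these identities yields
\[
(\partial L)^{-1}([a-1,n]) \;=\; L^{-1}([a,n])\cup\{u\},
\]
where $u=v_\ell$ if no chain element carries an $L$-label $\ge a$, and $u=v_{p-1}$ otherwise, with $v_p$ the chain element of smallest label $\ge a$; in both cases $u\notin L^{-1}([a,n])$. If $u=v_\ell$ then $u$ is maximal in $P$, while if $u=v_{p-1}$ then every element strictly above $v_{p-1}$ has $L$-label at least $L(v_p)\ge a$, because $v_p$ has the smallest $L$-label among the elements above $v_{p-1}$. Either way, adjoining $u$ to the up-set $L^{-1}([a,n])$ produces an up-set, which proves the claim.

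Granting the claim, (ii) is immediate: $L^{-1}([a,n])$ is an up-set for every $a\ge c(L)$, and $c(L)\ge 2$ because $L\notin\L(P)$, so $(\partial L)^{-1}([b,n])$ is an up-set for every $b\ge c(L)-1$, i.e.\ $c(\partial L)\le c(L)-1$. For (i) I would use that promotion lowers any label by at most $1$: for $z\ne v_\ell$ one has $\partial L(z)=L'(z)-1\ge L(z)-1$, and $\partial L(v_\ell)=n$. Hence if $z\in\F(L)$ then $\partial L(z)\ge L(z)-1\ge c(L)-1\ge c(\partial L)$, so $z\in(\partial L)^{-1}([c(\partial L),n])=\F(\partial L)$. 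Combining with (ii) gives $\F(L)\subsetneq\F(\partial L)$.

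The step I expect to require the most care is the bookkeeping inside the claim: identifying precisely the single element $u$ by which $(\partial L)^{-1}([a-1,n])$ differs from $L^{-1}([a,n])$, while correctly handling the ``$1\mapsto n$'' wraparound and the cyclic relabeling along the promotion chain in Step~1. Once the threshold $c(M)$ and the ``smallest available label'' property of the promotion chain are in place, the remaining deductions are routine.
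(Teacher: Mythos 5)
Your proof is correct. The paper does not prove this lemma itself (it is quoted as Lemma~2.6 of Defant--Kravitz), and your argument is essentially the standard one from that source: tracking the threshold $c(M)$ that determines $\F(M)=M^{-1}([c(M),n])$, using the monotonicity of labels along the promotion chain to show that $(\partial L)^{-1}([a-1,n])$ equals the up-set $L^{-1}([a,n])$ together with a single new element $u$ (either the maximal chain endpoint $v_\ell$ or the element $v_{p-1}$ all of whose strict upper bounds already carry labels $\ge a$), and concluding both the inclusion $\F(L)\subseteq\F(\partial L)$ and the strict growth $c(\partial L)\le c(L)-1$; all the bookkeeping checks out.
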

\begin{prop} 
\label{corollary:tangled-labeling-n-(n-1)-locations}
    If $L$ is a tangled labeling of $P$, then $L^{-1}(n)$ is a basin.  In particular, if $P$ has exactly one basin, then $|\T(P)| \le (n-1)!$.
\end{prop}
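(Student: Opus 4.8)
The plan is to show directly that $x := L^{-1}(n)$ satisfies $\funnel(x) \neq \varnothing$, by producing an explicit element of $\funnel(x)$. From \cref{lemma:tangled-labelings-characterization} we already know that $x$ is minimal in $P$ and that $x <_P y$ for $y := L_{n-2}^{-1}(1)$, so it suffices to prove that $x$ is the unique minimal element of the principal lower order ideal $\downarrow y$; this is exactly the statement $y \in \funnel(x)$, from which $\funnel(x)\neq\varnothing$ and hence $x$ is a basin.

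The key step is to rule out frozen elements strictly below $y$. I would argue that if $z \in \F(L_{n-2})$ and $z <_P y$, then applying the definition of frozen to $L_{n-2}$ with $a = L_{n-2}(z)$ shows that $L_{n-2}^{-1}\big(\{L_{n-2}(z), L_{n-2}(z)+1, \ldots, n\}\big)$ is an upper order ideal; it contains $z$, hence contains $y$, so $L_{n-2}(y) \geq L_{n-2}(z)$. On the other hand, as is used in the proof of \cref{lemma:tangled-labelings-characterization} via \cite[Lemma~2.7]{defant-kravitz-2023}, the elements $L_{n-2}^{-1}(3), \ldots, L_{n-2}^{-1}(n)$ are all frozen with respect to $L_{n-2}$, so $L_{n-2}(z) \geq 3 > 1 = L_{n-2}(y)$, a contradiction. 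Hence no frozen element of $L_{n-2}$ lies strictly below $y$.

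The remaining steps are bookkeeping. Any $z \in \downarrow y$ with $z \neq y$ satisfies $z <_P y$, hence is not frozen, hence has $L_{n-2}$-label in $\{1,2\}$; since $y$ carries the label $1$, we get $z = L_{n-2}^{-1}(2)$, and the label-sliding computation from the proof of \cref{lemma:tangled-labelings-characterization} (using that $x$ is minimal) identifies $L_{n-2}^{-1}(2) = L^{-1}(n) = x$. Thus $\downarrow y \subseteq \{x,y\}$, and since $x <_P y$ we conclude $\downarrow y = \{x,y\}$ with $x$ its unique minimal element; this shows $x$ is a basin. For the final clause, if $P$ has exactly one basin $b$ then every tangled labeling $L$ must satisfy $L^{-1}(n) = b$, and since there are precisely $(n-1)!$ labelings of $P$ with a prescribed element labeled $n$, it follows that $|\T(P)| \leq (n-1)!$.

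I expect the step in the second paragraph to be the only real obstacle: the whole argument hinges on correctly extracting, from the frozen structure of $L_{n-2}$, the fact that the element labeled $1$ after $n-2$ promotions sits directly above the (unique) minimal element $x$ of its lower ideal with nothing else beneath it. The rest is routine, and the statement sharpens \cite[Corollary~3.7]{defant-kravitz-2023} precisely because a poset with a unique minimal element has at most one basin.
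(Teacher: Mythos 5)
Your proof is correct and is essentially the paper's argument in direct rather than contrapositive form: both rest on the facts that $L_{n-2}^{-1}(3),\dots,L_{n-2}^{-1}(n)$ are frozen, that the frozen set is an upper order ideal, and that $L_{n-2}^{-1}(1)$ cannot be frozen, from which one concludes that nothing other than $x$ lies strictly below $L_{n-2}^{-1}(1)$. (One small remark: your inference ``$z$ frozen $\Rightarrow L_{n-2}(z)\ge 3$'' needs the extra observation that the elements labeled $1$ and $2$ are not frozen, but this step is redundant anyway, since $L_{n-2}(y)\ge L_{n-2}(z)$ together with $L_{n-2}(y)=1$ and $z\ne y$ already gives the contradiction.)
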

\begin{proof}
We first show that
for any minimal element $x\in P$ that is not a basin, there is no tangled labeling $L$ with $L(x)=n$. Suppose to the contrary that there exists such a tangled labeling $L$. 
Let $w = L_{n-2}^{-1}(1)$. By \cref{lemma:tangled-labelings-characterization}, $x <_P w$. Since $x$ is not a basin, $\funnel(x) =\varnothing$. Hence, there exists a minimal element $z \neq x$ such that $z<_P w$.

Since $w=L_{n-2}^{-1}(1)$ and $x=L^{-1}(n) = L_{n-2}^{-1}(2)$, it follows that $z = L_{n-2}^{-1}(m)$ for some $m\geq 3$. The elements $L_{n-2}^{-1}(3),\ldots,L_{n-2}^{-1}(n)$ are frozen as a consequence of \cref{lemma:frozen-set}. Recall that the set of frozen elements is an upper order ideal. Since $z$ is a frozen element and $z <_P w$, $w$ must also be a frozen element, which is a contradiction since $L_{n-2}$ is not a natural labeling. Therefore if $L$ is a tangled labeling and $L^{-1}(n)$ is a minimal element of $P$, then $L^{-1}(n)$ must be a basin.

Finally, suppose $P$ has a unique basin $x$. Then any tangled labeling $L$ of $P$ must satisfy $L(x)=n$. There are $(n-1)!$ labelings $L$ that satisfy $L(x) = n$, so $|\T(P)| \le (n-1)!$.
\end{proof}

The following two lemmas relate tangled labelings and funnels of posets. They will be used in \cref{section:shoelace-posets} to prove that shoelace posets satisfy the $(n-2)!$ conjecture.
\begin{lem} 
    \label{lemma:sufficient-condition-tangled}
    Let $x$ be a basin of $P$ and let $L$ be a labeling such that $L^{-1}(n) = x$ and $L^{-1}(n-1) \in \funnel(x)$. Then $L$ is tangled.
\end{lem}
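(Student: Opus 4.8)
The plan is to verify the two conditions of \cref{lemma:tangled-labelings-characterization} for $L$. Write $x = L^{-1}(n)$ and $y = L^{-1}(n-1)$, so that by hypothesis $x$ is a basin and $y \in \funnel(x)$. Condition~(1) is immediate: a basin is by definition a minimal element, so $L^{-1}(n) = x$ is minimal in $P$. It remains to establish condition~(2), namely $x <_P L_{n-2}^{-1}(1)$, and this is where the bulk of the argument lies.

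First I would locate $L_{n-2}^{-1}(1)$ inside the principal lower order ideal $\downarrow y$. This follows at once from the chain of inequalities \cref{Inequals for label n-1}, whose first and last terms give $L_{n-2}^{-1}(1) \le_P L^{-1}(n-1) = y$. Hence $\downarrow L_{n-2}^{-1}(1) \subseteq \downarrow y$, and any minimal element of the subposet $\downarrow L_{n-2}^{-1}(1)$ is a minimal element of $P$ lying below $y$; since $y \in \funnel(x)$, the only such element is $x$. As $\downarrow L_{n-2}^{-1}(1)$ is finite and nonempty it has a minimal element, which must therefore be $x$, giving $x \le_P L_{n-2}^{-1}(1)$.

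To upgrade this to a strict inequality I would track the label of $x$ under promotion. Because $x$ is minimal it can never sit strictly above the element labeled $1$, so it can only appear in a promotion chain as the chain's first element, which happens only once its own label has dropped to $1$; consequently $L_r(x) = n - r$ for all $0 \le r \le n-1$, exactly as in the proof of \cref{lemma:tangled-labelings-characterization}. In particular $L_{n-2}(x) = 2 \ne 1$, so $x \ne L_{n-2}^{-1}(1)$. Combining this with the previous paragraph yields $x <_P L_{n-2}^{-1}(1)$, which is condition~(2), and \cref{lemma:tangled-labelings-characterization} then shows that $L$ is tangled.

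I do not anticipate a serious obstacle here; the one point requiring care is the strictness step, i.e.\ ruling out that $x$ itself carries label $1$ after $n-2$ promotions, and this is handled by the standard observation that the label of a minimal element decreases by exactly one under each promotion until it reaches $1$. Everything else is bookkeeping with \cref{Inequals for label n-1} and the definition of $\funnel(x)$.
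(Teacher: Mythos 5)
Your proof is correct and follows essentially the same route as the paper's: both verify the two conditions of \cref{lemma:tangled-labelings-characterization}, deduce $x \le_P L_{n-2}^{-1}(1)$ from \cref{Inequals for label n-1} together with the funnel hypothesis, and obtain strictness from $L_{n-2}(x) = 2 \neq 1$. You merely spell out in more detail the step the paper leaves implicit, namely why the unique minimal element below $L_{n-2}^{-1}(1)$ must be $x$.
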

\begin{proof}
It is clear from the definition of basins that condition $(1)$ of \cref{lemma:tangled-labelings-characterization} is satisfied. So it suffices to show that $L^{-1}(n) <_P (L_{n-2})^{-1}(1)$. From \cref{Inequals for label n-1}
and the condition that $L^{-1}(n-1) \in \funnel(x)$,
\[
x\leq_P (L_{n-2})^{-1}(1)\leq_P  L^{-1}(n-1).
\]
Furthermore,
\[
x=L^{-1}(n)=(L_{n-2})^{-1}(2)\not=(L_{n-2})^{-1}(1).
\]
Thus, we have the strict inequality $x = L^{-1}(n) <_P (L_{n-2})^{-1}(1)$, which is precisely condition $(2)$ of \cref{lemma:tangled-labelings-characterization}.
\end{proof}
\begin{lem}
    \label{lemma:tangled-funnel-condition}
    Let $P$ be a poset on $n$ elements and $L$ a tangled labeling of $P$. Let $x,y \in P$ such that $x$ is a minimal element and $x <_P y$. If $L(x) = n$ and $L(y) = n-1$, then there exists $z \in \funnel(x)$ such that $z \le_P y$.
\end{lem}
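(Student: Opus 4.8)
The plan is to produce an \emph{explicit} witness: I claim that $w := L_{n-2}^{-1}(1)$ itself lies in $\funnel(x)$ and satisfies $w \le_P y$, so that $z = w$ works. Two of the three required facts are already available. First, $x <_P w$: this is exactly condition $(2)$ of \cref{lemma:tangled-labelings-characterization}, since $x = L^{-1}(n)$. Second, $w \le_P y$: by hypothesis $y = L^{-1}(n-1)$, and \cref{Inequals for label n-1} reads
$L_{n-2}^{-1}(1) \le_P L_{n-3}^{-1}(2) \le_P \cdots \le_P L_1^{-1}(n-2) \le_P L^{-1}(n-1)$,
whose two endpoints are $w$ and $y$. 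So the entire content reduces to showing that $x$ is the unique minimal element of $\downarrow w$, which is the remaining clause in the definition of $\funnel(x)$.

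For that claim I would reproduce, almost verbatim, the argument in the proof of \cref{corollary:tangled-labeling-n-(n-1)-locations}. Since $x <_P w$, the element $w$ is not minimal, while $x$ \emph{is} a minimal element of $\downarrow w$. Suppose, for contradiction, that there were another minimal element $z' \in \downarrow w$ with $z' \ne x$; as $w$ is not minimal this gives $z' <_P w$. Because $x$ is minimal with $L(x) = n$, the same slide-down computation used inside \cref{lemma:tangled-labelings-characterization} yields $L_{n-2}^{-1}(2) = x$, while $L_{n-2}^{-1}(1) = w$ by definition; hence $z' \notin \{x,w\}$ forces $L_{n-2}(z') \ge 3$. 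By \cite[Lemma 2.7]{defant-kravitz-2023} the elements $L_{n-2}^{-1}(3), \dots, L_{n-2}^{-1}(n)$ are frozen with respect to $L_{n-2}$, so $z' \in \F(L_{n-2})$. Since $\F(L_{n-2})$ is an upper order ideal and $z' <_P w$, it follows that $w \in \F(L_{n-2})$; but $L_{n-2}(w) = 1$, and a frozen element carrying the label $1$ forces $L_{n-2}$ to be a natural labeling, contradicting $\order(L) = n-1$. Therefore no such $z'$ exists, $w \in \funnel(x)$, and taking $z = w$ finishes the proof.

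I do not anticipate a genuine obstacle: this statement is essentially a localized refinement of \cref{corollary:tangled-labeling-n-(n-1)-locations}, and the delicate points are merely bookkeeping — the identities $L_{n-2}^{-1}(1) = w$ and $L_{n-2}^{-1}(2) = x$ (valid precisely because $x$ is a minimal element labeled $n$), and the correct joint use of ``the frozen set is an upper order ideal'' with \cite[Lemma 2.7]{defant-kravitz-2023}. One should also remark that the case $w = y$ is possible and harmless, since the conclusion only requires $z \le_P y$ and not strict inequality.
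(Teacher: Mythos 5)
Your proposal is correct and follows essentially the same route as the paper: both take $z = L_{n-2}^{-1}(1)$ as the explicit witness, get $z \le_P y$ from \cref{Inequals for label n-1} and $x <_P z$ from \cref{lemma:tangled-labelings-characterization}, and then use the fact that all of $L_{n-2}^{-1}(3),\dotsc,L_{n-2}^{-1}(n)$ are frozen together with the frozen set being an upper order ideal to rule out any second minimal element below $z$. Your version merely spells out slightly more explicitly why a frozen element carrying the label $1$ would force $L_{n-2}$ to be natural, which is a fine (and arguably clearer) way to land the same contradiction.
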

\begin{proof}
    Let $z = L_{n-2}^{-1}(1)$. By
\cref{Inequals for label n-1}, $z = L_{n-2}^{-1}(1)  \le_P L^{-1}(n-1) = y$. Thus, $z \le_P y$. Since $L$ is a tangled labeling, \cref{lemma:tangled-labelings-characterization} implies that $x = L_{n-2}^{-1}(2) <_P L_{n-2}^{-1}(1) = z$. There are at least $n-2$ frozen elements with respect to $L_{n-2}$, but $x$ and $z$ are not frozen with respect to $L_{n-2}$. Since the set of frozen elements with respect to a labeling form an upper order ideal, it follows that $z$ covers $x$ and no other elements. Hence, $z \in \funnel(x)$.
\end{proof}
\begin{lem} 
    \label{lemma:n-2-conjecture-disjoint}
    Let $P_1$ be a poset with $n_1$ elements and $P_2$ a poset with $n_2$ elements. If \cref{conj:(n-1)-refinement} holds for $P_1$ and $P_2$, then \cref{conj:(n-1)-refinement} also holds for the disjoint union $P_1 \sqcup P_2$. 
\end{lem}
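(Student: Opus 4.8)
The plan is to reduce everything to the two factors via a ``shuffle'' description of promotion on a disjoint union. Write $P=P_1\sqcup P_2$ and $n=n_1+n_2$. Given $L\in\Lambda(P)$, let $S_1=L(P_1)\subseteq[n]$, let $S_2=[n]\setminus S_1$, and let $L^{(1)}\in\Lambda(P_1)$ and $L^{(2)}\in\Lambda(P_2)$ be the standardizations of $L$ on $P_1$ and $P_2$. Then $L\mapsto(S_1,L^{(1)},L^{(2)})$ is a bijection from $\Lambda(P)$ onto the set of triples $(S,M_1,M_2)$ with $S\in\binom{[n]}{n_1}$, $M_1\in\Lambda(P_1)$, $M_2\in\Lambda(P_2)$. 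First I would determine how $\partial$ acts on this data. Since there are no relations between $P_1$ and $P_2$, the promotion chain of $L$ lies entirely in the component containing the label $1$, and a direct inspection of \cref{definition:extended-promotion} shows: if $1\in S_1$ then $\partial L\leftrightarrow(S',\partial L^{(1)},L^{(2)})$, while if $1\in S_2$ then $\partial L\leftrightarrow(S',L^{(1)},\partial L^{(2)})$, where in either case $S'$ is the image of $S_1$ under the cyclic shift $j\mapsto j-1$ of $[n]$ (with $1\mapsto n$).

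Iterating, the label set of $P_1$ after $t$ promotions is the $t$-fold cyclic shift of $S_1$, so for $0\le t\le n-1$ the $t$-th promotion acts on the $P_1$-factor iff $t+1\in S_1$; hence among the first $t$ promotions exactly $|S_1\cap[t]|$ act on $P_1$ and $|S_2\cap[t]|$ act on $P_2$, so the standardizations of $\partial^tL$ on $P_1$ and $P_2$ are $\partial^{|S_1\cap[t]|}L^{(1)}$ and $\partial^{|S_2\cap[t]|}L^{(2)}$. Since a labeling of $P$ is natural iff both standardizations are natural, and a natural labeling remains natural under promotion (so $\partial^s L^{(i)}$ is natural iff $s\ge\order(L^{(i)})$), I would conclude that $\order(L)$ equals the larger of the $\order(L^{(1)})$-th smallest element of $S_1$ and the $\order(L^{(2)})$-th smallest element of $S_2$, with the convention that the $0$-th smallest element of a set is $0$. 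As $\order(L^{(i)})\le n_i-1=|S_i|-1$, each of these two numbers is at most the second-largest element of its $S_i$, hence at most $n-1$; and the $i$-th one equals $n-1$ precisely when $\{n-1,n\}\subseteq S_i$ and $\order(L^{(i)})=n_i-1$, i.e.\ $L^{(i)}\in\T(P_i)$. Consequently $L$ is tangled iff for exactly one $i$ we have $\{n-1,n\}\subseteq S_i$ and $L^{(i)}\in\T(P_i)$.

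Next I would fix $x$; by symmetry assume $x\in P_1$. If $n_1=1$ then $x$ is minimal, so by the remark following \cref{lemma:tangled-labelings-characterization} there are no tangled $x$-labelings and the statement holds trivially; assume $n_1\ge2$. If $L$ is a tangled $x$-labeling, then $L(x)=n-1$ gives $n-1\in S_1$, and \cref{cor:(n-r)-inequality-when-tangled} gives $L^{-1}(n)<_P x$, which forces $L^{-1}(n)\in P_1$ and hence $n\in S_1$; so only the index $i=1$ can occur in the characterization above, and moreover, as $n-1,n$ are the two largest elements of $[n]$ and both lie in $S_1$, the element of $P_1$ carrying label $n-1$ is the one carrying $L^{(1)}$-label $n_1-1$, so $L^{(1)}\in\T_x(P_1)$. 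Conversely, every triple with $\{n-1,n\}\subseteq S_1$, $L^{(1)}\in\T_x(P_1)$, and $L^{(2)}\in\Lambda(P_2)$ arbitrary gives a tangled $x$-labeling of $P$. Hence $|\T_x(P)|=\binom{n-2}{n_1-2}\,|\T_x(P_1)|\,n_2!$. Applying \cref{conj:(n-1)-refinement} for $P_1$ (which holds by hypothesis), $|\T_x(P_1)|\le(n_1-2)!$ with equality iff there is a unique minimal $y\in P_1$ with $y<_{P_1}x$; since $\binom{n-2}{n_1-2}(n_1-2)!\,n_2!=(n-2)!$ and the factor $\binom{n-2}{n_1-2}\,n_2!$ is a fixed positive integer, this yields $|\T_x(P)|\le(n-2)!$ with equality iff $|\T_x(P_1)|=(n_1-2)!$. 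Finally, because $x\in P_1$, an element of $P$ is minimal and strictly below $x$ iff it is a minimal element of $P_1$ strictly below $x$, so the two equality conditions coincide; the case $x\in P_2$ is symmetric.

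The main obstacle is the structural claim of the first two paragraphs: that promotion on $P_1\sqcup P_2$ decouples into a cyclic shift of the label set $S_1$ together with one promotion step applied to a single factor. The delicate point is to verify directly from \cref{definition:extended-promotion} that the bubbling step and the relabeling step together leave the standardization on the untouched component unchanged while advancing the touched component by exactly one of its own promotions; once this is in hand, tracking $|S_1\cap[t]|$, the resulting formula for $\order(L)$, and the final enumeration are routine.
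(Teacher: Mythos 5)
Your proposal is correct, and its enumerative core coincides with the paper's: in both arguments the tangled $x$-labelings of $P_1\sqcup P_2$ (for $x\in P_1$, $n_1\ge 2$) are put in bijection with triples consisting of a tangled $x$-labeling of $P_1$, a choice of $\binom{n-2}{n_1-2}$ label positions for $P_1\setminus\{L^{-1}(n),L^{-1}(n-1)\}$, and an arbitrary labeling of $P_2$, after which the bound $(n-2)!$ and the equality analysis are identical. The difference is in how the key characterization is obtained. The paper simply cites \cite[Theorem 3.4]{defant-kravitz-2023}, which says that for $n_1\ge 2$ a labeling $L$ with $L^{-1}(n)\in P_1$ is tangled iff $\st(L|_{P_1})\in\T(P_1)$, and combines it with \cref{cor:(n-r)-inequality-when-tangled} to force $n,n-1$ into $S_1$. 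You instead prove from \cref{definition:extended-promotion} that promotion on a disjoint union decouples --- the label set $S_1$ shifts cyclically while exactly one factor advances by one of its own promotions --- and you extract the sharper statement $\order(L)=\max(s_1,s_2)$, where $s_i$ is the $\order(L^{(i)})$-th smallest element of $S_i$. Your decoupling argument checks out (the promotion chain stays in the component containing the label $1$, and the global relabeling preserves the standardization of the untouched component while enacting the other component's own relabeling), and the deduction that $s_i=n-1$ forces $\{n-1,n\}\subseteq S_i$ and $L^{(i)}\in\T(P_i)$ is correct. What your route buys is self-containedness and extra information: your order formula determines the full sorting generating function of a disjoint union from those of its parts, not merely the count of tangled labelings; what it costs is the length of re-deriving a result the paper is content to quote.
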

\begin{proof}
    Let $x \in P_1 \sqcup P_2$ and $L$ be an $x$-labeling of $P$ (i.e., $L(x)=n-1$). If $x \in P_1$ and $n_1 \ge 2$, then by \cite[Theorem 3.4]{defant-kravitz-2023}, $L$ is tangled if and only if $L^{-1}(n) \in P_1$ and $\st(L|_{P_1}) \in \T(P_1)$. Thus, the tangled $x$-labelings of $P_1 \sqcup P_2$ are enumerated by a choice of one of the $|\T_x(P_1)|$ tangled $x$-labelings of $P_1$, one of the $\binom{n_1+n_2-2}{n_1-2}$ assignments of the labels $L^{-1}(P_1) \setminus \{n, n-1\}$, and one of the $n_2!$ labelings on $P_2$. Since $P_1$ satisfies \cref{conj:(n-1)-refinement}, $|\T_x(P_1)| \le (n_1-2)!$. Therefore,
    \begin{equation}
        \label{equation:n-2-conjecture-disjoint}
        \begin{aligned}
        |\T_x(P_1 \sqcup P_2)| &= |\T_x(P_1)| \cdot n_2! \cdot \binom{n_1+n_2-2}{n_1-2}\\
        &\le (n_1-2)! \cdot n_2! \cdot \binom{n_1+n_2-2}{n_1-2}\\
        &= (n_1+n_2-2)!.
        \end{aligned}
    \end{equation}
    
    If $x \in P_1$ and $n_1 < 2$, then by the contrapositive of \cref{cor:(n-r)-inequality-when-tangled}, $L$ is not tangled. In this case, tangled $x$-labelings of $P_1 \sqcup P_2$ do not exist, so $|\T_x(P_1 \sqcup P_2)| \leq (n_1+n_2-2)!$ clearly. Equality in \cref{equation:n-2-conjecture-disjoint} holds if and only if $|\T_x(P_1)| = (n_1-2)!$. Since $P_1$ satisfies \cref{conj:(n-1)-refinement}, $|\T_x(P_1)| = (n_1-2)!$ if and only if there is a unique minimal element $z \in P_1$ such that $z <_{P_1} x$. It follows that equality in \cref{equation:n-2-conjecture-disjoint} holds if and only if there is a unique minimal element $z \in P_1 \sqcup P_2$ such that $z <_{P_1 \sqcup P_2} x$. If $x \in P_2$, then by an identical argument, $|\T_x(P_1 \sqcup P_2)| \le (n_1+n_2-2)!$, with equality if and only if there is a unique minimal element $z \in P_2$ such that $z <_{P_1 \sqcup P_2} x$. Therefore, $P_1 \sqcup P_2$ satisfies \cref{conj:(n-1)-refinement}.
\end{proof}

By \cref{lemma:n-2-conjecture-disjoint}, it suffices to show the $(n-2)!$ conjecture for connected posets. Thus, for the remainder of the paper, we will assume our posets are connected.

\section{Inflated Rooted Forest Posets}
\label{section:inflated-rooted-forests}

In \cite{defant-kravitz-2023}, a large class of posets known as \emph{inflated rooted forest posets} was introduced and it was shown in \cite{hodges-2022} that \cref{conj:main-conjecture} holds for inflated rooted forest posets. In this section, we strengthen this result by showing that \cref{conj:(n-1)-refinement} holds for inflated rooted forest posets.

\begin{defn}[{\cite[Definition 3.2]{defant-kravitz-2023}}]
    Let $P,Q$ be finite posets. The poset $P$ is an \definition{inflation} of $Q$ if there exists a surjective map $\varphi: P \to Q$ that satisfies the following two properties:
    \begin{enumerate}
        \item For any $x \in Q$, the preimage $\varphi^{-1}(x)$ has a unique minimal element in $P$. 
        \item For any $x, y\in P$ such that $\varphi(x) \neq\varphi(y)$,  $x<_P y$ if and only if $\varphi(x) <_Q\varphi(y)$.
    \end{enumerate}
    Such a map $\varphi$ is called an \definition{inflation map}.
    \label{defn:inflation}
\end{defn}

\begin{example}
 In \cref{fig:InflatedRooted} the poset $P$ is an inflation of the poset $Q$. The inflation map $\varphi$ is constant on each colored box in $P$ and maps to the corresponding element in $Q$ pointed to by the arrow. For example, the element labeled $u_{1,1}$ in $Q$ corresponds to the subposet $\varphi^{-1}(u_{1,1})$ in $P$ outlined in green. In general, the preimage of an element in $Q$ must have a unique minimal element, by definition, but may have multiple maximal elements.
\end{example}
\begin{figure}[!hbt]
    \centering
    \includegraphics[width=0.9\textwidth]{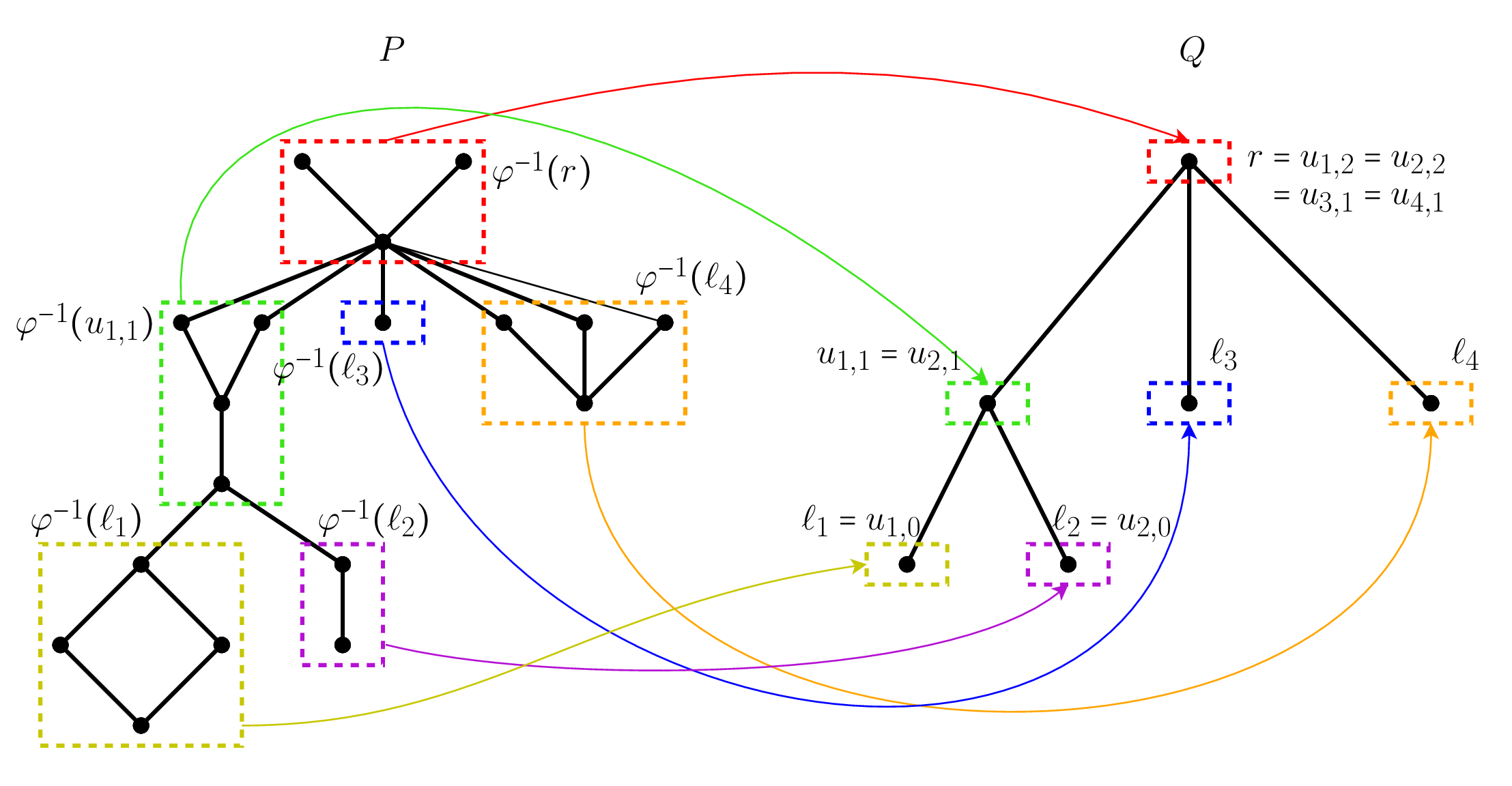}
    \caption{A rooted tree $Q$ and its inflation $P$. The inflation map $\varphi$ is represented by the arrows from $P$ to $Q$.}
    \label{fig:InflatedRooted}
\end{figure}
\begin{defn}
[{\cite[Definition 3.1]{defant-kravitz-2023}}]
    A \definition{rooted tree poset} $Q$ is a finite poset satisfying the following two properties:
    \begin{enumerate}
        \item There is a unique maximal element of $Q$ called the \definition{root} of $Q$.
        \item Every non-root element in $Q$ is covered by exactly one element.
    \end{enumerate}
\end{defn}

A \definition{rooted forest poset} is defined to be a finite poset that can be written as the disjoint union of rooted tree posets. The posets $P$ and $Q$ in \cref{fig:InflatedRooted} are examples of an inflated rooted tree poset and a rooted tree poset, respectively. Throughout the rest of this section, unless otherwise specified, $Q$ will denote a rooted tree poset and $P$ will denote an inflation of $Q$ with inflation map $\varphi$.

The following definitions on inflated rooted tree posets can be found in \cite{defant-kravitz-2023}. We reproduce them here for the reader's convenience and to state \cref{lemma:inflation-inequality-lemma} precisely. Let $r$ be the root of $Q$ and let $x$ be a non-root element of $Q$. The unique element $y$ that covers $x$ in $Q$ is called the \definition{parent} of $x$. The minimal elements of $Q$ are called \definition{leaves}. A rooted tree poset is said to be \definition{reduced} if every non-leaf element covers at least 2 elements. 
By \cite[Remark 3.3]{defant-kravitz-2023},
every inflated rooted tree poset can be obtained as an inflation of a reduced rooted tree poset, so in the following, we will generally restrict ourselves to reduced rooted tree posets.

Let $\ell_1, \ldots, \ell_m$ denote the leaves of $Q$, where $m$ is the number of leaves. For each $i \in [m]$, we have a unique maximal chain from $\ell_i$ to $r$
\begin{equation}
    \ell_i = u_{i,0} \lessdot_Q u_{i,1} \lessdot_Q \cdots \lessdot_Q u_{i,\omega_i} = r,
\end{equation}
where $\omega_i$ denotes the length of the chain. Recall that $u_{i,0}\lessdot_Q u_{i,1}$ means that $u_{i,1}$ covers $u_{i,0}$ in $Q$. For $i \in [m]$ and $j \in [\omega_i]$, define the two quantities
\begin{equation}
\begin{aligned}
    b_{i,j} &= \sum_{v \le_Q u_{i,j-1}} |\varphi^{-1}(v)|,\\
    c_{i,j} &= \sum_{v <_Q u_{i,j}} |\varphi^{-1}(v)|.
\end{aligned}
\end{equation}
The fraction $\frac{b_{i,j}}{c_{i,j}}$ therefore represents the fraction of elements in $P$ below the minimal element of $\varphi^{-1}(u_{i,j})$ that lie on the preimage of the maximal chain from $\ell_i$ to $r$. When it is necessary to specify the rooted tree poset $Q$, we shall do so by indicating $Q$ in parentheses. For example, we will write $u_{i,j}(Q)$ instead of $u_{i,j}$ or $\omega_i(Q)$ instead of $\omega_i$.

\begin{example}
    The vertices of $Q$ in \cref{fig:InflatedRooted} are labeled in accordance with our definitions above. For example, the maximal chain from $\ell_1$ to $r$ is $\ell_1 = u_{1,0} \lessdot u_{1,1} \lessdot u_{1,2} = r$. The length of this maximal chain is $\omega_1 = 2$. As another example, the maximal chain from $\ell_2$ to $r$ is $\ell_2 = u_{2,0} \lessdot u_{2,1} \lessdot u_{2,2} = r$. Notice that $u_{i,j}$ may refer to the same element in $Q$ for distinct $i$ and $j$. For example, $u_{2,1} = u_{1,1}$ in \cref{fig:InflatedRooted}, and the root $r$ is equal to $u_{1,2}$, $u_{2,2}$, $u_{3,1}$, and $u_{4,1}$.
    
    The quantity $b_{1,1}$ can be computed by 
    $$b_{1,1} = \sum_{v \le_Q u_{1,0}} |\varphi^{-1}(v)|
        = |\varphi^{-1}(u_{1,0})|
        = 4.$$
    Similarly, the quantity $c_{1,1}$ can be computed by $$c_{1,1} = \sum_{v <_Q u_{1,1}} |\varphi^{-1}(v)|
        = |\varphi^{-1}(u_{1,0})| + |\varphi^{-1}(u_{2,0})|
        = 6.$$
    Therefore $\frac{b_{1,1}}{c_{1,1}} = \frac{4}{6}$ of the elements in $P$ below the minimal element of $\varphi^{-1}(u_{1,1})$ lie in the direction of $\varphi^{-1}(\ell_1)$. 
\end{example}

The following technical lemma provides a useful bound for the formula in \cref{theorem:x-labeling-inflated-rooted-forest-formula}. The left  side of \cref{eq:inflation-bound} appears in~\cite[Theorem 3.5]{defant-kravitz-2023}, and a similar term also appears in~\cite[Theorem 9]{hodges-2022}.

\begin{lem}
    \label{lemma:inflation-inequality-lemma}
   Let $Q$ be a reduced rooted tree poset with $m$ leaves and let $P$ be an inflation of $Q$ with $n$ elements. Then
   \begin{equation}
      \label{eq:inflation-bound}
      \sum_{i=1}^m \prod_{j=1}^{\omega_i(Q)} \frac{b_{i,j}(Q) - 1}{c_{i,j}(Q)-1} \le 
      \begin{cases}
      1 & \text{if $n = 1$,}\\
      \frac{n-m}{n-1} & \text{otherwise.}
      \end{cases}
   \end{equation}
\end{lem}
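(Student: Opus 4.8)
The plan is to prove the bound by induction on the structure of the reduced rooted tree poset $Q$, peeling off the root. Write $r$ for the root of $Q$ and let $r$ cover the elements $w_1, \dots, w_k$ in $Q$ (so $k \ge 2$ since $Q$ is reduced, unless $Q$ is a single point). Each $w_t$ is the root of a reduced rooted subtree $Q_t = {\downarrow}_Q w_t$, and $P$ restricts to an inflation $P_t$ of $Q_t$; let $n_t = |P_t|$ and let $m_t$ be the number of leaves of $Q_t$, so $\sum_t n_t = n - |\varphi^{-1}(r)|$ and $\sum_t m_t = m$. For each leaf $\ell_i$ of $Q$, the maximal chain from $\ell_i$ to $r$ passes through exactly one $w_t$, and its terms $u_{i,1} \lessdot \cdots \lessdot u_{i,\omega_i} = r$ split as the chain inside $Q_t$ (contributing the factors $\frac{b_{i,j}-1}{c_{i,j}-1}$ for $j < \omega_i$, which are computed identically whether viewed in $Q$ or in $Q_t$, since the relevant principal ideals lie entirely in $P_t$) followed by the single last factor $\frac{b_{i,\omega_i}-1}{c_{i,\omega_i}-1}$ with $b_{i,\omega_i} = c_{i,\omega_i}(Q_t) + $ nothing extra $= n_t$ actually $b_{i,\omega_i} = \sum_{v \le_Q w_t}|\varphi^{-1}(v)| = n_t$ and $c_{i,\omega_i} = \sum_{v <_Q r}|\varphi^{-1}(v)| = n - |\varphi^{-1}(r)| =: N$. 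Hence
\begin{equation}
   \sum_{i=1}^m \prod_{j=1}^{\omega_i(Q)} \frac{b_{i,j}(Q)-1}{c_{i,j}(Q)-1}
   = \frac{1}{N-1}\sum_{t=1}^k (n_t - 1)\left( \sum_{\ell_i \in Q_t}\ \prod_{j=1}^{\omega_i(Q_t)} \frac{b_{i,j}(Q_t)-1}{c_{i,j}(Q_t)-1}\right).
\end{equation}

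Next I would apply the inductive hypothesis to each $Q_t$. If $n_t = 1$ the inner parenthesized sum is $1$ and the coefficient $n_t - 1 = 0$, so that term vanishes; otherwise the inner sum is at most $\frac{n_t - m_t}{n_t - 1}$, and multiplying by $(n_t-1)$ gives at most $n_t - m_t$. Summing over $t$, the whole expression is bounded by $\frac{1}{N-1}\sum_{t=1}^k (n_t - m_t) = \frac{N - m}{N - 1}$. It therefore remains to check $\frac{N-m}{N-1} \le \frac{n-m}{n-1}$, where $N = n - |\varphi^{-1}(r)| \le n - 1$ (the root's preimage is nonempty). This is the statement that $x \mapsto \frac{x-m}{x-1} = 1 - \frac{m-1}{x-1}$ is nondecreasing in $x$ for $x > 1$ when $m \ge 1$, which holds since $m - 1 \ge 0$; one must also handle the edge case $N = 1$ (i.e. $|\varphi^{-1}(r)| = n-1$, forcing each $Q_t$ to be a single leaf with $n_t = 1$), in which case the left side of the displayed identity is literally $0 \le \frac{n-m}{n-1}$ and we are done separately. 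The base case $n = 1$ (equivalently $Q$ a single point, $m = 1$) gives left side $= 1$, matching the claimed bound.

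I would also need to dispatch the degenerate case where $Q$ consists of a single chain-free situation — more precisely, if $Q$ is a single point then $m=1$, $\omega_i$-sum is empty, value $1$; this is the base case. If $n \ge 2$ but $Q$ is still a single point, then $m = 1$ and the value is $1 = \frac{n-m}{n-1}\cdot\frac{n-1}{n-1}$, wait — here $m=1$ gives $\frac{n-1}{n-1} = 1$, so equality holds and the bound is tight; good. The main obstacle I anticipate is purely bookkeeping: verifying carefully that $b_{i,j}(Q) = b_{i,j}(Q_t)$ and $c_{i,j}(Q) = c_{i,j}(Q_t)$ for the "interior" indices $j < \omega_i$, i.e. that passing to the subtree rooted at $w_t$ does not change any of the principal-ideal cardinalities appearing in those factors. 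This follows because for $j \le \omega_i$ with $u_{i,j-1} \le_Q w_t$ (which holds for all $j < \omega_i$ and also $j = \omega_i$ on the $b$ side), every $v \le_Q u_{i,j-1}$ satisfies $v \le_Q w_t$, so $\varphi^{-1}(v) \subseteq P_t$; and similarly for the $c$ sums with $u_{i,j} \le_Q w_t$, valid for $j < \omega_i$. Once this identification is in place the argument reduces to the elementary monotonicity computation above, so the only real care needed is in setting up the induction cleanly and tracking which leaf belongs to which subtree.
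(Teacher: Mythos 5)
Your proposal is correct and follows essentially the same route as the paper's own proof: both peel off the root $r$, identify the subtree quantities $b_{i,j}(Q_t)=b_{i,j}(Q)$, $c_{i,j}(Q_t)=c_{i,j}(Q)$ together with the last factor $\frac{n_t-1}{N-1}$ where $N=n-|\varphi^{-1}(r)|$, apply the inductive hypothesis to each subtree (treating $n_t=1$ separately), and finish with the monotonicity of $x\mapsto \frac{x-m}{x-1}$. The only cosmetic difference is that the paper phrases the induction as being on the height $h(Q)$ rather than on the tree structure directly, and it never needs your $N=1$ edge case since reducedness forces the root to cover at least two elements, giving $N\ge 2$.
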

\begin{proof}
    We will prove the bound by inducting on $h(Q) = \max \{\omega_1(Q), \ldots, \omega_m(Q)\}$. The base case is when $h(Q) = 0$. In this case, there is a single leaf so $m = 1$ and $\omega_1(Q) = 0$. Thus, the left side of the inequality is the sum of a single empty product which is equal to $1$. The right side is $1$ regardless of whether $n = 1$ or $n > 1$, so the inequality holds when $h(Q) = 0$.

    Now, suppose $h(Q) > 0$ and that the lemma holds for all rooted tree posets $Q'$ with $h(Q') < h(Q)$. Since $h(Q) > 0$, $n > 1$. Now, let $r$ denote the root of $Q$ and let $q_1, \ldots, q_t$ be the elements covered by $r$. Recall that for an element $x$ in a poset, $\downarrow x$ denotes the set of elements less than or equal to $x$. The subposets $Q_k =\ \downarrow q_k$ are all rooted tree posets with $h(Q_k) \le h(Q) - 1$, and $P_k = \varphi^{-1}(Q_k)$ is an inflation of $Q_k$. Let $n_k = |\varphi^{-1}(Q_k)|$ so that $n - |\varphi^{-1}(r)| = n_1 + \cdots + n_t$, and let $m_k$ denote the number of leaves of $Q_k$ so that $m = m_1 + \cdots + m_t$. For convenience, let $M_k$ denote the $k$th partial sum $m_1 + \cdots + m_k$ and let $M_0 = 0$. Without loss of generality, order the leaves $\ell_1, \ldots, \ell_m$ of $Q$ such that the leaves of $Q_k$ are $\ell_{M_{k-1}+1}, \ldots, \ell_{M_k}$.

    Observe that for $M_{k-1} + 1 \le i \le M_k$, $\omega_i(Q_k) = \omega_i(Q) - 1$, and for $1 \le j \le \omega_i(Q_k)$, $b_{i,j}(Q_k) = b_{i,j}(Q)$ and $c_{i,j}(Q_k) = c_{i,j}(Q)$. Additionally, $b_{i, \omega_i(Q)}(Q) = n_k$ and $c_{i,\omega_i(Q)}(Q) = n - |\varphi^{-1}(r)|$. Thus,
    \begin{align*}
        \sum_{i=1}^m \prod_{j=1}^{\omega_i(Q)} \frac{b_{i,j}(Q) - 1}{c_{i,j}(Q) - 1} &= \sum_{k=1}^t \left(\sum_{i=M_{k-1}+1}^{M_k} \prod_{j=1}^{\omega_i(Q)} \frac{b_{i,j}(Q) - 1}{c_{i,j}(Q)-1}\right)\\
        &= \sum_{k=1}^t \left(\sum_{i=M_{k-1}+1}^{M_k} \frac{n_k-1}{n-|\varphi^{-1}(r)|-1} \cdot \prod_{j=1}^{\omega_i(Q)-1} \frac{b_{i,j}(Q) - 1}{c_{i,j}(Q)-1}\right)\\
        &= \sum_{k=1}^t \frac{n_k-1}{n-|\varphi^{-1}(r)|-1} \cdot \left(\sum_{i=M_{k-1}+1}^{M_k}  \prod_{j=1}^{\omega_i(Q_k)} \frac{b_{i,j}(Q_k) - 1}{c_{i,j}(Q_k)-1}\right).
    \end{align*}
    For each $1 \le k \le t$, if $n_k = m_k = 1$, then we clearly have
    \begin{align*}
        \frac{n_k-1}{n-|\varphi^{-1}(r)|-1} \cdot \left(\sum_{i=M_{k-1}+1}^{M_k} \prod_{j=1}^{\omega_i(Q_k)} \frac{b_{i,j}(Q_k) - 1}{c_{i,j}(Q_k)-1}\right) &\le \frac{n_k-m_k}{n-|\varphi^{-1}(r)|-1},
    \end{align*}
    as both sides of the inequality are 0. If $n_k > 1$, then by the inductive hypothesis we also have
    \begin{align*}
    \frac{n_k-1}{n-|\varphi^{-1}(r)|-1} \cdot \left(\sum_{i=M_{k-1}+1}^{M_k} \prod_{j=1}^{\omega_i(Q_k)} \frac{b_{i,j}(Q_k) - 1}{c_{i,j}(Q_k)-1}\right) &\le \frac{n_k-1}{n-|\varphi^{-1}(r)|-1} \cdot \frac{n_k-m_k}{n_k-1}\\
    &= \frac{n_k-m_k}{n-|\varphi^{-1}(r)|-1}.
    \end{align*}
    Thus, we conclude that
    \begin{equation}
        \label{eq:shoelace-poset-inequality}
        \begin{split}
        \sum_{i=1}^m \prod_{j=1}^{\omega_i(Q)} \frac{b_{i,j}(Q) - 1}{c_{i,j}(Q) - 1} &\le \sum_{k=1}^t \frac{n_k-m_k}{n-|\varphi^{-1}(r)|-1}\\
        &= \frac{n-|\varphi^{-1}(r)|-m}{n-|\varphi^{-1}(r)|-1}\\
        &\le \frac{n-m}{n-1}. 
        \end{split}
    \end{equation}
\end{proof}

\begin{remark}
\label{remark:irfp-strict-ineq}
    Since $|\varphi^{-1}(r)| > 0$, the final inequality in \cref{eq:shoelace-poset-inequality} is strict for $m > 1$. If $m = 1$, then there is only one leaf in $Q$, so $b_{1,j}(Q) = c_{1,j}(Q)$ for $1 \le j \le \omega_1(Q)$ and equality holds. In particular, the upper bound in \cref{lemma:inflation-inequality-lemma} is never sharp for $m > 1$.
\end{remark}

\begin{defn}
    Let $P$ be an $n$ element poset and $X \subseteq P$. A \definition{partial labeling} of $P$ is an injective map $M: X \to [n]$. A labeling $L:P \to [n]$ is an \definition{extension} of $M$ if $L|_X = M$. The set of extensions of $M$ is denoted $\Lambda(P,M)$.
\end{defn}

\begin{defn}
    Let $P$ be a poset and $x \in P$. The element $x$ is \definition{lower order ideal complete (LOI-complete)} if any element that is comparable to some element in $\downarrow x$ is also comparable to $x$ itself.

\end{defn}

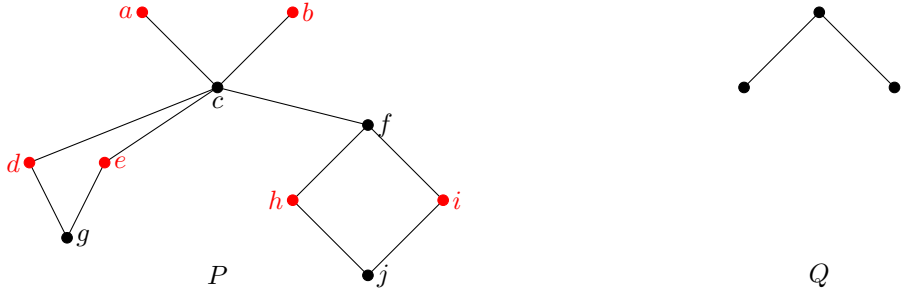
\begin{figure}[ht]
\centering
\begin{tikzpicture}
            %top section of P:
            \node at (0,-2.5){$P$};
            \draw (0,0) -- (1,1);
            \draw (0,0) -- (-1,1);
            \filldraw[black] (0,0) circle (2pt) node [anchor=north] {$c$};
            \filldraw[red] (1,1) circle (2pt) node [anchor=west]{$b$};
            \filldraw[red] (-1,1) circle (2pt) node [anchor=east] {$a$};
            % bottom left of P:
            \draw (0,0) -- (-1.5,-1) -- (-2,-2) -- (-2.5, -1) -- (0,0); 
            \filldraw[red] (-1.5,-1) circle (2pt) node [anchor = west]{$e$};
            \filldraw[red] (-2.5,-1) circle (2pt) node [anchor = east]{$d$};
            \filldraw[black] (-2,-2) circle (2pt) node [anchor = west]{$g$};
            % bottom right of P:
            \draw (0,0)-- (2,-.5) -- (3,-1.5) -- (2,-2.5) -- (1,-1.5) -- (2,-.5);
            \filldraw[black] (2,-.5) circle (2pt) node [anchor = west]{$f$};
            \filldraw[red] (3,-1.5) circle (2pt) node [anchor = west]{$i$};
            \filldraw[black] (2,-2.5) circle (2pt) node [anchor = west]{$j$};
            \filldraw[red] (1,-1.5) circle (2pt) node [anchor = east]{$h$};
            %Poset Q:
            \draw (7,0) --(8,1)--(9,0);
            \filldraw[black] (7,0) circle (2pt);
            \filldraw[black] (8,1) circle (2pt);
            \filldraw[black] (9,0) circle (2pt);
            \node at (8,-2.5){$Q$};
        \end{tikzpicture}
\caption{A rooted tree poset $Q$ and an inflation $P$ of $Q$. The LOI-complete elements in $P$ are colored black.}

\label{fig:loi-complete}
\end{figure}
\begin{example}
    \label{example:loi-complete}
    Consider the rooted tree poset $Q$ and its inflation $P$ in \cref{fig:loi-complete}. In $P$, the elements $c$, $f$, $g$, and $j$ are all LOI-complete, since for each of those elements, all elements comparable to $\downarrow c$, $\downarrow f$, $\downarrow g$, and $\downarrow j$ are also comparable to $c$, $f$, $g$, and $j$, respectively. The elements $a$, $b$, $d$, $e$, $h$, and $i$, colored in red, are \emph{not} LOI-complete. For example, $b$ is not LOI-complete because the element $a$ is comparable to $c \in \downarrow b$ but $a$ is not comparable to $b$.
\end{example}

\begin{lem}
    \label{lemma:minimum-element-is-loi-complete}
    Let $Q$ be a rooted tree poset and let $P$ be an inflation of $Q$ with inflation map $\varphi: P \to Q$. For any $q \in Q$, the unique minimal element of $\varphi^{-1}(q)$ is LOI-complete in $P$.
\end{lem}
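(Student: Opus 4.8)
The plan is to reduce the statement to two elementary structural facts and then run a short case split. The first fact is that the unique minimal element $\hat q$ of $\varphi^{-1}(q)$ is in fact the minimum of $\varphi^{-1}(q)$: in any finite poset a unique minimal element is a minimum, since an element failing to lie above it would have to be incomparable to it, and a minimal element of that element's down-set would then be a second minimal element of the whole poset. Applied inside the subposet $\varphi^{-1}(q)$, this gives $\hat q \le_P v$ for every $v \in \varphi^{-1}(q)$. The second fact is that in a rooted tree poset $Q$ every principal upper order ideal $\uparrow_Q a$ is a chain: a saturated chain going upward from $a$ is forced step by step by the ``covered by exactly one element'' axiom and terminates at the root, so any two elements lying above $a$ are comparable. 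I expect both facts to be immediate; the second is essentially the reason the maximal chains $\ell_i = u_{i,0} \lessdot_Q \cdots \lessdot_Q u_{i,\omega_i} = r$ are well defined.

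Granting these, I would fix $q \in Q$, write $\hat q$ for the minimum of $\varphi^{-1}(q)$, take arbitrary $w \le_P \hat q$ and $v$ comparable to $w$, and show $v$ is comparable to $\hat q$. The cases $v = w$, $w = \hat q$, and $v <_P w$ (where $v <_P \hat q$) are immediate, so assume $w <_P \hat q$ and $w <_P v$. First, $\varphi(w) = q$ would force $w = \hat q$ by the minimality of $\hat q$ in its fiber, so $\varphi(w) \neq q$, and condition (2) of \cref{defn:inflation} gives $\varphi(w) <_Q q$. If $\varphi(v) = \varphi(w)$, then $\varphi(v) <_Q q = \varphi(\hat q)$, and condition (2) yields $v <_P \hat q$. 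If instead $\varphi(v) \neq \varphi(w)$, then condition (2) gives $\varphi(w) <_Q \varphi(v)$; since $\varphi(v)$ and $q$ then both lie in the chain $\uparrow_Q \varphi(w)$, they are comparable in $Q$, and I split into three subcases: if $\varphi(v) = q$ then $v \in \varphi^{-1}(q)$, so $v \ge_P \hat q$ by the first fact; if $\varphi(v) <_Q q$ then $v <_P \hat q$ by condition (2); and if $\varphi(v) >_Q q$ then $v >_P \hat q$ by condition (2). In every case $v$ is comparable to $\hat q$, so $\hat q$ is LOI-complete.

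I do not expect a genuine obstacle here; the argument is essentially bookkeeping. The points requiring care are to argue rather than assume that the unique minimal element of a fiber is its minimum, and to invoke the chain structure of $\uparrow_Q \varphi(w)$ at exactly the step where $\varphi(v)$ must be compared with $q$. One could shorten the write-up slightly by folding the subcase $\varphi(v) = \varphi(w)$ into the others, since $\varphi(v) = \varphi(w) <_Q q$ also lies in $\uparrow_Q \varphi(w)$.
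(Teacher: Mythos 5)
Your proposal is correct and follows essentially the same route as the paper: reduce to the facts that the unique minimal element of the fiber is its minimum and that principal upper order ideals in a rooted tree poset are chains, deduce $\varphi(w) <_Q q$ for $w$ strictly below $\hat q$, and then case-split on whether $\varphi(v)$ equals, lies below, or lies above $q$, invoking condition (2) of the inflation definition in each case. The only difference is that you make explicit two small facts the paper treats as immediate; no gap.
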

\begin{proof}
    Denote the unique minimal element of $\varphi^{-1}(q)$ by $x$. Let $y \in \downarrow \varphi^{-1}(q)$ and suppose $z \in P$ is comparable to $y$. If $y = x$ then $z$ and $x$ are comparable by definition. Otherwise, if $y \neq x$, then $\varphi(y) <_Q \varphi(x) = q$ since $x$ is the unique minimal element of $\varphi^{-1}(q)$. Since $z \in P$ is comparable to $y$ and $P$ is a rooted tree poset, $\varphi(z)$ and $\varphi(y)$ are comparable and hence $\varphi(z)$ and $q$ is comparable.
    
    If $\varphi(z) <_Q q$, then $z <_P x$ by \cref{defn:inflation}. If $\varphi(z) = q$, then $x \le_P z$ since $x$ is the unique minimal element of $\varphi^{-1}(q)$. If $q <_Q \varphi(z)$, then $x <_P z$. In each case, $z$ is comparable to $x$. Therefore $x$ is LOI-complete in $P$.
\end{proof}

We will need the following probability lemmas from \cite{defant-kravitz-2023}, so we have reproduced them for convenience.

\begin{lem}[{\cite[Lemma~3.10]{defant-kravitz-2023}}]
    \label{lemma:probability-other}
    Let $P$ be an $n$ element poset and $x \in P$ be LOI-complete. Let $X =~ \downarrow x \setminus \{x\}$. For $L \in \Lambda(P)$ and $k \ge 0$, the set $L_k(X)$ depends only on the set $L(X)$ and the restriction $L|_{P \setminus X}$. It does not depend on the way in which labels in $L(X)$ are distributed among the elements of $X$.
\end{lem}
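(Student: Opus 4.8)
The plan is to prove, by induction on $k$, the a priori stronger statement that the pair $\bigl(L_k(X),\,L_k|_{P\setminus X}\bigr)$ is determined by the pair $\bigl(L(X),\,L|_{P\setminus X}\bigr)$; the original claim is then its first coordinate. Write $U=\{y\in P:y\ge_P x\}$, and note that $X=\downarrow x\setminus\{x\}$ is a lower order ideal of $P$, that $U\cap X=\varnothing$, and that both $X$ and $U$ are determined by $P$ alone, so $L|_{P\setminus X}$ restricts to a well-defined function on $U$. The base case $k=0$ is trivial, so everything reduces to the one-step claim that $(\partial L)(X)$ and $(\partial L)|_{P\setminus X}$ depend only on $(L(X),L|_{P\setminus X})$. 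Recall from \cref{definition:extended-promotion} that the promotion chain of $L$ is a chain $x_0<_P x_1<_P\cdots<_P x_s$ with $x_0=L^{-1}(1)$ and $x_s$ maximal, where $x_{i+1}$ is the element above $x_i$ with the smallest $L$-label (at that stage the elements strictly above $x_i$ still carry their original $L$-labels, since only $x_0,\dots,x_{i-1}$, all below $x_i$, have been altered); after the single promotion, $x_i$ carries label $L(x_{i+1})-1$ for $i<s$, the maximal element $x_s$ carries $n$, and every element not on the chain has its label decreased by $1$.

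If $1\notin L(X)$ then $x_0\in P\setminus X$, and since $X$ is a lower order ideal the entire chain avoids $X$; hence no label of an element of $X$ is swapped and $(\partial L)(X)=\{a-1:a\in L(X)\}$, while the chain (which begins at the element carrying label $1$) and therefore $(\partial L)|_{P\setminus X}$ are read off directly from $L|_{P\setminus X}$.

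The substantive case is $1\in L(X)$, i.e.\ $x_0\in X$. Every element of $X$ lies strictly below $x$ and so is non-maximal, hence the chain must leave $X$; as $X$ is downward closed there is a threshold $t<s$ with $x_0,\dots,x_t\in X$ and $x_{t+1},\dots,x_s\notin X$. The crux, which I expect to be the main obstacle, is to identify $x_{t+1}$ without reference to how the labels $L(X)$ are distributed among the elements of $X$, and this is exactly where LOI-completeness of $x$ enters: any $y>_P x_t$ is comparable to $x_t\in\downarrow x$, hence comparable to $x$, so any such $y$ outside $X$ lies in $U$; conversely $x_t<_P x$ forces $U\subseteq\{y:y>_P x_t\}$. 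Since $x_{t+1}$ is the $L$-minimizer over $\{y:y>_P x_t\}$ and is not in $X$, it must equal $w:=\argmin_{y\in U}L(y)$, an element determined by $L|_{P\setminus X}$. Granting this, the label bookkeeping collapses to $(\partial L)(X)=\{a-1:a\in L(X)\setminus\{1\}\}\cup\{L(w)-1\}$ — the off-chain elements of $X$ contribute $\{a-1:a\in L(X)\setminus\{1,L(x_1),\dots,L(x_t)\}\}$ and the chain elements $x_0,\dots,x_t$ contribute $\{L(x_1)-1,\dots,L(x_t)-1,L(x_{t+1})-1\}$ with $x_{t+1}=w$ — which is a function of $(L(X),L|_{P\setminus X})$ alone. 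Finally, from $x_{t+1}=w\in U$ every later chain element lies above an element of $U$, hence in $U\subseteq P\setminus X$, so the tail $x_{t+1},\dots,x_s$ of the chain and all labels it carries are determined by $L|_{P\setminus X}$; combined with the fact that every element of $P\setminus X$ off the chain just loses $1$ from its label, this shows $(\partial L)|_{P\setminus X}$ is determined by $(L(X),L|_{P\setminus X})$, which completes the inductive step and, taking first coordinates, the lemma.
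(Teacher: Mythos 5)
Your proof is correct. Note that the paper itself gives no proof of this statement: it is reproduced verbatim from Defant and Kravitz (their Lemma~3.10) and used as a black box, so the only comparison available is with the cited source, whose argument is essentially the one you give. Your write-up is complete and sound: the strengthened induction on the pair $\bigl(L_k(X), L_k|_{P\setminus X}\bigr)$, the observation that $X$ is a lower order ideal so the chain cannot re-enter it, and the use of LOI-completeness to force $\{y : y >_P x_t\}\setminus X = U$ and hence to identify the exit point of the promotion chain as $\argmin_{y\in U} L(y)$, are exactly the ingredients needed, and your label bookkeeping in both cases checks out.
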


\begin{lem}[{\cite[Lemma~3.11]{defant-kravitz-2023}}] 
    \label{lemma:probability}
    Let $P$ be an $n$ element poset and $x \in P$ be LOI-complete. Let $X =~ \downarrow x \setminus \{x\}$ and suppose that $X \neq \varnothing$. Let $A \subseteq X$  have the property that no element of $A$ is comparable with any element in $X \setminus A$ and let $M: P \setminus X \to [n]$ be a partial labeling such that $L_{n-1}^{-1}(1) \in X$ for every extension $L$ of $M$. If a labeling $L$ is chosen uniformly at random from the extensions in $\Lambda(P, M)$, then the probability that $L^{-1}_{n-1}(1) \in A$ is $\frac{|A|}{|X|}$.
\end{lem}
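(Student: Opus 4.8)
The plan is to fix the partial labeling $M$, write $X = C_1 \sqcup \cdots \sqcup C_q$ for its decomposition into connected components (so the hypothesis on $A$ says exactly that $A$ is a union of some of the $C_i$), and prove the sharper statement $\Pr[L_{n-1}^{-1}(1) \in C_i] = |C_i|/|X|$ for each individual component; since $L_{n-1}^{-1}(1) \in X$ always, the component it lands in is well defined, and adding over the components contained in $A$ then gives $\Pr[L_{n-1}^{-1}(1)\in A] = |A|/|X|$. To handle a single component $C$, the goal is to show that the map sending an extension $L$ to the \emph{set} $L_{n-1}(C)$ factors through the set $L(C)$ and is, as a function from $|C|$-element subsets of $L(X)$ to $|C|$-element subsets of $L_{n-1}(X)$, a bijection; then the count is immediate.

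The first ingredient is a few structural observations. Since $X = \downarrow x \setminus \{x\}$ is a lower order ideal, a promotion chain can never enter $X$ from outside; and because $x$ is LOI-complete, any element lying above a vertex of $X$ is again in $X$ or lies in $\uparrow x := \{z : z \ge_P x\}$, so once a chain leaves $X$ it stays in $\uparrow x$, and inside $X$ it can never move between distinct components. Hence, if the chain of $L_k$ starts in $X$ it stays inside a single component $C^*$ until it exits into $\uparrow x$, and — the key point — the label that gets pushed back onto the last chain vertex in $C^*$ is forced to equal $\min L_k(\uparrow x)$ (the exit vertex is the minimum-label element above the exiting vertex, and being in $\uparrow x$ it has the minimum label there). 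Combined with the companion of Lemma~\ref{lemma:probability-other} that $L_k|_{P \setminus X}$ depends only on $M$ (proved by the same induction), this yields for each component $C$ an explicit $M$-driven recursion for $L_k(C)$ in terms of $L(C)$: writing $\mu_k := \min L_k(\uparrow x)$, an $M$-determined integer $\ge 2$ not lying in $L_k(X)$, one has $L_{k+1}(C) = \{t - 1 : t \in L_k(C)\}$ when $1 \notin L_k(C)$, and $L_{k+1}(C) = \{t - 1 : t \in (L_k(C)\setminus\{1\}) \cup \{\mu_k\}\}$ when $1 \in L_k(C)$.

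Writing $S_k$ for the ($M$-determined) set $L_k(X)$, which has $|X|$ elements and contains $1$ when $k = n-1$, a single step of the recursion is a map from the $|C|$-element subsets of $S_k$ to those of $S_{k+1}$, and I would verify it is a bijection: the subsets avoiding $1$ shift down onto the subsets of $S_{k+1}$ that avoid $\mu_k - 1$, while the subsets containing $1$ shift, after replacing $1$ by $\mu_k$, onto those containing $\mu_k - 1$, and both correspondences are invertible. Composing, $L(C) \mapsto L_{n-1}(C)$ is a bijection from the $|C|$-subsets of $L(X)$ onto the $|C|$-subsets of $S_{n-1}$. Since $L_{n-1}^{-1}(1) \in C$ iff $1 \in L_{n-1}(C)$, the extensions with $L_{n-1}^{-1}(1)\in C$ are obtained by choosing such an $L(C)$ — equivalently, by choosing one of the $\binom{|X|-1}{|C|-1}$ $|C|$-subsets of $S_{n-1}$ through $1$ — and then distributing labels freely, giving $\binom{|X|-1}{|C|-1}\cdot |C|!\,(|X|-|C|)!$ of the $|X|!$ extensions, a proportion of $|C|/|X|$. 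Summing over the components of $X$ inside $A$ finishes the proof.

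The step I expect to require the most care is the structural one, specifically the claim that the label re-entering a component upon exit is exactly $\min L_k(\uparrow x)$: this is where LOI-completeness of $x$ is used essentially, to rule out ``sideways'' elements above $X$, and it must be combined correctly with the global relabeling $i \mapsto i-1$ in the definition of promotion to produce the recursion as stated. The companion strengthening of Lemma~\ref{lemma:probability-other} is needed only so that $\mu_k$ is a function of $M$ alone, and it should follow by rerunning that lemma's induction while tracking the full restriction $L_k|_{P\setminus X}$ rather than just the set $L_k(X)$.
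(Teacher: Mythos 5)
This lemma is imported verbatim from Defant and Kravitz \cite[Lemma~3.11]{defant-kravitz-2023}; the paper reproduces the statement only ``for convenience'' and gives no proof of its own, so there is no in-paper argument to compare yours against. Judged on its own merits, your proof is correct and complete in all essential respects. The reduction to a single connected component $C$ of $X$ is valid: the hypothesis on $A$ says exactly that $A$ is a union of components, and the events $\{L_{n-1}^{-1}(1)\in C_i\}$ partition the sample space because of the hypothesis on $M$. The structural claims all check out: since $X$ is a lower order ideal a promotion chain cannot enter it from outside; by LOI-completeness everything strictly above a point of $X$ lies in $X$ or in $\uparrow x=\{z: z\ge_P x\}$; and since every element of $\uparrow x$ lies above every element of $X$, the label pushed back into $C$ at the exit step is forced to equal $\mu_k=\min L_k(\uparrow x)$, which is $\ge 2$ and outside $L_k(X)$ precisely in the case where you invoke it. The resulting one-step recursion for $L_k(C)$, the $M$-determinacy of $S_k=L_k(X)$ and of $\mu_k$ (via the strengthened form of \cref{lemma:probability-other} that tracks all of $L_k|_{P\setminus X}$, which does go through by the same induction since nothing in $P\setminus X$ ever looks at individual labels inside $X$, only at the set $L_k(X)$), the verification that each step is a bijection on $|C|$-element subsets (splitting according to whether the subset contains $1$), and the final count $\binom{|X|-1}{|C|-1}\,|C|!\,(|X|-|C|)!/|X|! = |C|/|X|$ are all sound. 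Two small points worth making explicit in a polished write-up: no element of $C\subseteq\,\downarrow x\setminus\{x\}$ is maximal in $P$, so a chain starting in $C$ really does exit into $\uparrow x$ rather than terminating inside $X$; and the one-step bijection should separate the trivial case $1\notin S_k$, where $\mu_k$ plays no role and every subset simply shifts down by one.
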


By suitably modifying the proof of \cite[Theorem 3.5]{defant-kravitz-2023}, one can strengthen it to obtain \cref{theorem:x-labeling-inflated-rooted-forest-formula}. The following proof is self-contained, but the interested reader may wish to refer to \cite[Section 3]{defant-kravitz-2023} for further details.

\begin{thm}
\label{theorem:x-labeling-inflated-rooted-forest-formula}
    Let $Q$ be a reduced rooted tree poset with $m$ leaves and let $P$ be an inflation of $Q$ with $n $ elements, with inflation map $\varphi: P \to Q$. For a nonminimal element $x \in P$, let $\ell(x) = \{i \in [m] : \ell_i \le_Q \varphi(x)\}$ and $\omega_{i,x} = \max \{j : u_{i,j} \le_Q \varphi(x)\}$. Then the number of tangled $x$-labelings of $P$ is given by
    \begin{equation*}
        |\T_x(P)| = (n-2)! \sum_{i \in \ell(x)} \prod_{j = 1}^{\omega_{i,x}} \frac{b_{i,j}(Q)-1}{c_{i,j}(Q)-1}.
    \end{equation*}
\end{thm}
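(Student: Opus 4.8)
The plan is to adapt the proof of \cite[Theorem~3.5]{defant-kravitz-2023}, carrying along the element that holds label $n-1$. Write $q=\varphi(x)$. Since $x$ is nonminimal, property~(2) of \cref{defn:inflation} quickly yields three facts: the minimal elements of $P$ are exactly the unique minimal elements $\widehat{\ell_1},\dots,\widehat{\ell_m}$ of the fibers $\varphi^{-1}(\ell_1),\dots,\varphi^{-1}(\ell_m)$; one has $\funnel(\widehat{\ell_i})=\varphi^{-1}(\ell_i)\setminus\{\widehat{\ell_i}\}$ (here reducedness of $Q$ is used), so $\widehat{\ell_i}$ is a basin precisely when $|\varphi^{-1}(\ell_i)|\ge 2$; and $\widehat{\ell_i}<_P x$ exactly when $\ell_i\le_Q q$, i.e.\ when $i\in\ell(x)$. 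By \cref{corollary:tangled-labeling-n-(n-1)-locations} and \cref{cor:(n-r)-inequality-when-tangled}, a tangled $x$-labeling $L$ has $L^{-1}(n)$ a basin lying strictly below $x$, so $L^{-1}(n)=\widehat{\ell_i}$ for some $i\in\ell(x)$, whence
\[
  |\T_x(P)|=\sum_{i\in\ell(x)}N_i,\qquad N_i:=\#\{L\in\Lambda(P): L(x)=n-1,\ L^{-1}(n)=\widehat{\ell_i},\ L\text{ tangled}\}.
\]
If $|\varphi^{-1}(\ell_i)|=1$ then $N_i=0$ by \cref{corollary:tangled-labeling-n-(n-1)-locations}; in that case $q>_Q\ell_i$, so $\omega_{i,x}\ge 1$ and the $j=1$ factor $\tfrac{b_{i,1}(Q)-1}{c_{i,1}(Q)-1}=\tfrac{|\varphi^{-1}(\ell_i)|-1}{c_{i,1}(Q)-1}$ of the claimed product also vanishes. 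Hence it suffices to show $N_i=(n-2)!\prod_{j=1}^{\omega_{i,x}}\tfrac{b_{i,j}(Q)-1}{c_{i,j}(Q)-1}$ for each $i\in\ell(x)$.

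First I would handle the base case $\omega_{i,x}=0$, i.e.\ $q=\ell_i$: then $\widehat{\ell_i}$ is the unique minimal element below $x$, so $x\in\funnel(\widehat{\ell_i})$, and \cref{lemma:sufficient-condition-tangled} shows every labeling with $L^{-1}(n)=\widehat{\ell_i}$ and $L(x)=n-1$ is tangled; thus $N_i=(n-2)!$, which is $(n-2)!$ times the empty product.

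For the general case, set $d:=\omega_{i,x}\ge 1$ and $v_j:=\widehat{u_{i,j}}$ for $0\le j\le d$, so that $v_0=\widehat{\ell_i}$, $\varphi(v_j)=u_{i,j}$, and $\varphi(v_d)=u_{i,d}=q$ (as $q$ lies on the chain from $\ell_i$ to $r$). By property~(2) of \cref{defn:inflation} and minimality of $v_j$ within its fiber, $v_0<_P v_1<_P\cdots<_P v_d\le_P x$; each $v_j$ is LOI-complete by \cref{lemma:minimum-element-is-loi-complete}; $|\downarrow v_j\setminus\{v_j\}|=c_{i,j}(Q)$; and the ``branch toward $\ell_i$'', $\varphi^{-1}(\{v\le_Q u_{i,j-1}\})\subseteq\downarrow v_j\setminus\{v_j\}$, has $b_{i,j}(Q)$ elements. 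Now fix $L$ with $L(x)=n-1$ and $L^{-1}(n)=v_0$; there are $(n-2)!$ of these, and by \cref{lemma:tangled-labelings-characterization} such an $L$ is tangled iff $v_0<_P L_{n-2}^{-1}(1)$. Iterating \cref{lemma:labels-slide-down} (cf.\ \cref{Inequals for label n-1}) shows $L_{n-2}^{-1}(1)\le_P x$ and that the holder of the least label descends through $\downarrow x$ one ``level'' at a time. As in \cite[Section~3]{defant-kravitz-2023}, whether $v_0<_P L_{n-2}^{-1}(1)$ is decided level by level, for $j=d,d-1,\dots,1$, by whether the least label lying on $\varphi^{-1}(\{v\le_Q u_{i,j}\})$ — a set of effective size $c_{i,j}(Q)-1$, the element $v_0$ being excluded since it carries label $n$ — actually lies on the sub-branch $\varphi^{-1}(\{v\le_Q u_{i,j-1}\})$, of effective size $b_{i,j}(Q)-1$. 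Because $L(x)=n-1$ and $L_{n-2}^{-1}(1)\le_P x$, the descent already begins at level $d$, so only levels $1,\dots,d$ impose a condition. Applying \cref{lemma:probability-other} and \cref{lemma:probability} at the LOI-complete elements $v_1,\dots,v_d$ in turn, and using that the supports $\downarrow v_1\subset\cdots\subset\downarrow v_d$ are nested — which makes the $d$ level-events independent, each of probability $\tfrac{b_{i,j}(Q)-1}{c_{i,j}(Q)-1}$ — gives $N_i=(n-2)!\prod_{j=1}^d\tfrac{b_{i,j}(Q)-1}{c_{i,j}(Q)-1}$. Summing over $i\in\ell(x)$ yields the theorem.

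The main obstacle is the ``level by level'' claim and its bookkeeping. One must verify that constraining $L(x)=n-1$ leaves the analysis of \cite{defant-kravitz-2023} below the fiber of $q$ intact (so the factors with $j\le d$ are literally theirs) while correctly truncating it at level $d$ — meaning that once the least label has reached the $\ell_i$-branch at level $d$, the large label frozen on $x$ forces $L_{n-2}^{-1}(1)>_P v_0$ with no further condition; this is the $\omega_{i,x}=0$ base case relocated to the top of the descent, and is where the circle of ideas around \cref{lemma:sufficient-condition-tangled} does the work. The delicate part is tracking which elements are ``already spent'' across the repeated use of \cref{lemma:probability}, so that the counts come out as $b_{i,j}(Q)-1$ and $c_{i,j}(Q)-1$ rather than $b_{i,j}(Q)$ and $c_{i,j}(Q)$; the ``$-1$'' is exactly the exclusion of the label-$n$ element $v_0$, which lies in every $\downarrow v_j$. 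As a consistency check, summing the resulting formula over all nonminimal $x$ telescopes, via $b_{i,j+1}(Q)=c_{i,j}(Q)+|\varphi^{-1}(u_{i,j})|$, to $(n-1)!\sum_{i=1}^m\prod_{j=1}^{\omega_i(Q)}\tfrac{b_{i,j}(Q)-1}{c_{i,j}(Q)-1}$, recovering \cite[Theorem~3.5]{defant-kravitz-2023}.
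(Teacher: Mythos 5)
Your proposal is correct and follows essentially the same route as the paper's proof: decompose tangled $x$-labelings by which fiber-minimum $\widehat{\ell_i}$ (with $\ell_i\le_Q\varphi(x)$) carries the label $n$, dispose of the case $\omega_{i,x}=0$ via \cref{lemma:sufficient-condition-tangled}, and for $\omega_{i,x}\ge 1$ run the level-by-level probabilistic argument of \cite[Section~3]{defant-kravitz-2023} at the LOI-complete fiber-minima along the chain to $\varphi(x)$, using \cref{lemma:probability-other} and \cref{lemma:probability}, with the $-1$'s in $b_{i,j}(Q)-1$ and $c_{i,j}(Q)-1$ coming from excluding the label-$n$ element (the paper makes this explicit by working in $\widetilde{P}=P\setminus\{x_0\}$). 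The only cosmetic difference is that you track the excluded element by hand inside $P$ rather than deleting it from the poset at the outset.
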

\begin{proof}
     Fix a leaf $\ell_i$ of $Q$ and let $x_0$ be the unique minimal element of $\varphi^{-1}(\ell_i)$. We will count the number of tangled labelings $L$ such that $L^{-1}(n) = x_0$ and $L^{-1}(n-1) = x$. By \cref{cor:(n-r)-inequality-when-tangled}, if $L$ is tangled, then $x_0 = L^{-1}(n) <_P L^{-1}(n-1) = x$. Thus, we need only consider leaves $\ell_i$ such that $\ell_i \le_Q \varphi(x)$. Furthermore, since $Q$ is reduced, $L$ is tangled if and only if $L^{-1}_{n-2}(1) \in \varphi^{-1}(\ell_i)$.

     If $\omega_{i,x} = 0$, then the product $\prod_{j=1}^{\omega_{i,x}} \frac{b_{i,j}(Q) -1}{c_{i,j}(Q)-1}$ is the empty product 1. In this case, $x \in \varphi^{-1}(\ell_i)$ so all $x$-labelings $L$ such that $L^{-1}(n) = x_0$ are tangled.

     Now, assume $\omega_{i,x} \ge 1$ and choose a labeling $L \in \Lambda(P)$ uniformly at random among the $(n-2)!$ labelings that satisfy $L^{-1}(n) = x_0$ and $L^{-1}(n-1) = x$. We will proceed to compute the probability that $L$ is tangled. Let $\widetilde{P} = P \setminus \{x_0\}$ and $\widetilde{\varphi} = \varphi|_{\widetilde{P}}$. For $1 \le j \le \omega_{i,x}$, let $x_j$ be the unique minimal element of $\widetilde{\varphi}^{-1}(u_{i,j})$, and define the sets
     \[
     X_j = \downarrow x_j \setminus \{x_j\} \text{ and } A_j = \bigcup_{v \le_Q u_{i,j-1}} \widetilde{\varphi}^{-1}(v).
     \]
     The sizes of the sets are $|X_j| = b_{i,j}(Q) - 1$ and $|A_j| = c_{i,j}(Q) - 1$.

     For any partial labeling $M: \widetilde{P} \setminus X_{\omega_{i,x}} \to [n-1]$ such that $M(x) = n-1$ and any extension $L$ of $M$, the condition $L^{-1}_{n-2}(1) \in X_{\omega_{i,x}}$ holds since $x \in \varphi^{-1}(u_{i,\omega_{i,x}})$. Furthermore, since $P$ is an inflated rooted forest poset and $x_j$ is the unique minimal element of $\widetilde{\varphi}^{-1}(u_{i,j})$, $x_j$ is LOI-complete, and no element of $A_j$ is comparable with any element of $X_j \setminus A_j$. Thus, the poset $\widetilde{P}$, the subsets $X_{\omega_{i,x}}$ and $A_{\omega_{i,x}}$, and the partial labeling $M$ satisfy the conditions in \cref{lemma:probability}. Applying the lemma tells us that the probability that $L_{n-2}^{-1}(1) \in A_{\omega_{i,x}}$ is
     \[
        \frac{|A_{\omega_{i,x}}|}{|X_{\omega_{i,x}}|} = \frac{b_{i,\omega_{i,x}}(Q)-1}{c_{i,\omega_{i,x}}(Q)-1}.
     \]
     Furthermore, \cref{lemma:probability-other} tells us that the occurrence of this event only depends on $L|_{\widetilde{P}\setminus A_{\omega_{i,x}}}$.

     This process can be continued for $j = \omega_{i,x}-1, \ldots, 1$ to deduce that the probability that $L_{n-2}^{-1}(1) \in \varphi^{-1}(u_{i,0})$ is the product
     \[
     \prod_{j=1}^{\omega_{i,x}} \frac{b_{i,j}(Q)-1}{c_{i,j}(Q)-1}.
     \]
     Summing over all the leaves such that $\ell_i \le_Q \varphi(x)$ yields the result.
\end{proof}

\begin{thm}\label{theorem:inflated_rooted}
    If $P$ is an inflated rooted forest poset on $n$ elements and $x\in P$, then $|\T_x(P)| \le (n-2)!$. Equality holds if and only if there is a unique minimal element $z \in P$ such that $z <_P x$.
\end{thm}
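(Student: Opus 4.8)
The plan is to reduce the statement to the exact formula in \cref{theorem:x-labeling-inflated-rooted-forest-formula} and the bound in \cref{lemma:inflation-inequality-lemma}, with a single extra maneuver: passing from $Q$ to the principal lower order ideal generated by $\varphi(x)$. Since $P$ is connected, it is an inflation of a single rooted tree poset, and by \cite[Remark 3.3]{defant-kravitz-2023} I may assume that tree poset $Q$ is reduced; write $\varphi \colon P \to Q$ for the inflation map. First I would dispose of the case that $x$ is minimal in $P$: then the contrapositive of \cref{cor:(n-r)-inequality-when-tangled} shows that no tangled labeling of $P$ can assign $x$ the label $n-1$, so $\T_x(P) = \varnothing$ and $|\T_x(P)| = 0 < (n-2)!$; since no minimal element lies strictly below $x$ either, the claimed equivalence holds vacuously. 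So from now on $x$ is nonminimal.

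Next I would set $Q' = \,\downarrow_Q \varphi(x)$ and $P' = \varphi^{-1}(Q')$. The routine checks are that $Q'$ is again a reduced rooted tree poset (with root $\varphi(x)$), that $\varphi|_{P'}$ exhibits $P'$ as an inflation of $Q'$, and that $|P'| \ge 2$ because $P' \supseteq \,\downarrow_P x$. Writing $n' = |P'|$ and letting $m' = |\ell(x)|$ be the number of leaves of $Q'$, the key bookkeeping point is that for each $i \in \ell(x)$ the maximal chain from $\ell_i$ up to $\varphi(x)$ inside $Q'$ is an initial segment of the chain from $\ell_i$ to the root of $Q$, so $\omega_i(Q') = \omega_{i,x}$ and the quantities $b_{i,j}, c_{i,j}$ (which only involve preimages of elements $\le_Q \varphi(x)$) are unchanged. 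Hence \cref{theorem:x-labeling-inflated-rooted-forest-formula} for $P$ and \cref{lemma:inflation-inequality-lemma} for the pair $(Q',P')$ combine to give
\[
  |\T_x(P)| = (n-2)!\sum_{i \in \ell(x)} \prod_{j=1}^{\omega_{i,x}} \frac{b_{i,j}(Q)-1}{c_{i,j}(Q)-1}
           = (n-2)!\sum_{i=1}^{m'} \prod_{j=1}^{\omega_i(Q')} \frac{b_{i,j}(Q')-1}{c_{i,j}(Q')-1}
           \le (n-2)!\cdot\frac{n'-m'}{n'-1},
\]
which is at most $(n-2)!$ since $m' \ge 1$; this establishes the inequality.

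For the equality clause I would argue that $|\T_x(P)| = (n-2)!$ forces $\frac{n'-m'}{n'-1} = 1$, hence $m' = 1$ (using $n' \ge 2$), and that \cref{remark:irfp-strict-ineq} shows the bound of \cref{lemma:inflation-inequality-lemma} is attained precisely when the tree has one leaf; conversely, when $m' = 1$ the poset $Q'$ is a chain, so $b_{i,j}(Q') = c_{i,j}(Q')$ for every $j$, the single product equals $1$, and equality holds. Finally I would translate $m' = 1$: the minimal elements of $P$ biject with the leaves of $Q$ by sending $\ell_i$ to the unique minimal element of $\varphi^{-1}(\ell_i)$, and under this bijection the minimal elements of $P$ strictly below $x$ correspond exactly to $\ell(x)$ (nonminimality of $x$ is what makes the boundary case $\ell_i = \varphi(x)$ behave correctly). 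Thus $m' = |\ell(x)| = 1$ exactly when there is a unique minimal $z \in P$ with $z <_P x$.

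I do not expect a genuine obstacle here: the theorem is largely an assembly job. The two places that need care are verifying that the passage to the principal ideal $Q'$ preserves the reduced-rooted-tree structure and fixes the parameters $b_{i,j}, c_{i,j}$, and pinning down the dictionary between $|\ell(x)| = 1$ and the order-theoretic condition ``unique minimal element below $x$''; the genuinely hard analytic content is already packaged in \cref{theorem:x-labeling-inflated-rooted-forest-formula} and \cref{lemma:inflation-inequality-lemma}.
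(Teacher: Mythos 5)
Your proposal is correct and follows essentially the same route as the paper: reduce to a connected (tree) component, restrict to the principal ideal $\widetilde{Q} = \,\downarrow_Q \varphi(x)$ and its preimage, and combine \cref{theorem:x-labeling-inflated-rooted-forest-formula} with \cref{lemma:inflation-inequality-lemma} and \cref{remark:irfp-strict-ineq}, translating ``one leaf of $\widetilde{Q}$'' into ``unique minimal element of $P$ below $x$.'' Your explicit handling of the case where $x$ is minimal and your check that the restricted tree remains reduced are small refinements the paper leaves implicit, but they do not change the argument.
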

\begin{proof}
    We first consider the case of an inflated rooted tree poset. Let $Q$ be a reduced rooted tree poset and $P$ an inflation of $Q$ with $|P| = n$. For an element $x$ of $P$, \cref{theorem:x-labeling-inflated-rooted-forest-formula} implies that
    \[
    |\T_x(P)| = (n-2)!\sum_{i \in \ell(x)}\prod_{j=1}^{\omega_{i,x}} \frac{b_{i,j}(Q) - 1}{c_{i,j}(Q)-1}.
    \]
    The subposet $\widetilde{Q} := \downarrow\varphi(x)$ is also a rooted tree poset. Let $\widetilde{P} := \varphi^{-1}(\widetilde{Q})$ and $\widetilde{\varphi}$ be the restriction $\varphi|_{\widetilde{P}}$. Then $\widetilde{P}$ is an inflated rooted tree poset, so \cref{lemma:inflation-inequality-lemma} gives the upper bound
    \begin{equation}
    \label{equation:inflated_rooted_bound}
    \sum_{i \in \ell(x)}\prod_{j=1}^{\omega_{i,x}} \frac{b_{i,j}(Q)-1}{c_{i,j}(Q)-1} \le 1.
    \end{equation}
    Therefore, $|\T_x(P)| \le (n-2)!$ in the case of an inflated rooted tree poset.
    
    Let $m$ denote the number of leaves in the subposet $\widetilde{Q}$. By \cref{remark:irfp-strict-ineq}, the inequality in \cref{equation:inflated_rooted_bound} is strict if and only if $m > 1$. The number of leaves in the subposet $\widetilde{Q}$ is precisely the number of minimal elements in $\widetilde{P}$. By definition of $\widetilde{P}$, the minimal elements in $\widetilde{P}$ are precisely the minimal elements $z \in P$ that satisfy $z <_P x$. Thus, equality in \cref{equation:inflated_rooted_bound} holds if and only if there is a unique minimal element $z \in P$ that satisfies $z <_P x$.
    
    The general case of an inflated rooted forest poset follows from \cref{lemma:n-2-conjecture-disjoint}, since an inflated rooted forest poset is a disjoint union of inflated rooted tree posets. 
\end{proof}

\section{Shoelace Posets}
\label{section:shoelace-posets}

In this section, we will study tangled labelings on a new family of posets called \emph{shoelace posets} and show that %\cref{conj:(n-1)-refinement}
the $(n-2)!$ conjecture holds for them. The key ingredient in the proof is a careful analysis of the number of tangled labelings where a fixed element in the poset is labeled $n-1$. We note that in general, shoelace posets are not the inflation of any rooted forest poset. We will also examine a specific subset of shoelace posets called \emph{$W$-posets}, and enumerate the exact number of tangled labelings of these posets.

\begin{defn}
    A \definition{shoelace poset} $P$ is a connected poset defined by a set of minimal elements $\{x_1, \ldots, x_\ell\}$, a set of maximal elements $\{y_1, \ldots, y_m\}$, and a set $\mathcal{S}(P) \subseteq \{x_1, \ldots, x_\ell\} \times \{y_1, \ldots, y_m\}$ such that the following three conditions hold:
    \begin{enumerate}
        \item For every $(i,j) \in [\ell] \times [m]$, the elements $x_i$ and $y_j$ are comparable in $P$ if and only if $(x_i, y_j) \in \mathcal{S}(P)$.
        \item For every $(x_i, y_j) \in \mathcal{S}(P)$, the open interval $(x_i, y_j)_P$ is a (possibly empty) chain, denoted $C_i^j$.
        \item For distinct pairs $(x_i,y_j),(x_{i'},y_{j'}) \in \mathcal{S}(P)$, the chains $C_i^j$ and $C_{i'}^{j'}$ are disjoint.
     \end{enumerate}
\end{defn}

We will use the following notation
\[
\mathcal{S}^j(P)=\{x_i : (x_i,y_j) \in \mathcal{S}(P)\},\qquad \mathcal{S}_i(P)=\{y_j : (x_i,y_j) \in \mathcal{S}(P)\}.
\]

The funnels of a shoelace poset can be described fairly simply. The funnel of a minimal element $x_i$ consists of the elements in $C_i^j$ for $y_j \in \mathcal{S}_i(P)$, along with the maximal elements $y_j$ for $y_j \in \mathcal{S}_i(P)$ that satisfy $\mathcal{S}^j(P) = \{x_i\}$.

\begin{example}
\cref{fig:shoelace} depicts a shoelace poset $P$ with 3 minimal elements and 4 maximal elements. In this example $\mathcal{S}(P) = \{(x_1,y_2), (x_1,y_3), (x_1,y_4), (x_2,y_1), (x_2,y_3), (x_3,y_3), (x_3,y_4)\}$. The elements of the chain $C_3^4$ are highlighted in blue and the chain $C_1^3$ is empty. Notice also that $\mathcal{S}_1(P) = \{y_2,y_3,y_4\}$ and $\mathcal{S}^2(P) = \{x_1\}$.
\end{example}
\begin{figure}[!ht]
    \centering
        \begin{tikzpicture}
            \draw (-3,2) -- (-1,0) -- (0,2) -- (1,0) -- (3,2) -- (-3,0);
            \draw (-2,2) -- (-3,0) -- (0,2);
            \filldraw (-3,2) circle (2pt) node [anchor=west]{$y_1$};
            \filldraw (-1,0) circle (2pt) node [anchor=north west]{$x_2$};
            \filldraw (0,2) circle (2pt) node [anchor=west]{$y_3$};
            \filldraw (1,0) circle (2pt) node [anchor=north west]{$x_3$};
            \filldraw (3,2) circle (2pt) node [anchor=west]{$y_4$};
            \filldraw (-3,0) circle (2pt) node [anchor=north west]{$x_1$};
            \filldraw (-2,2) circle (2pt) node [anchor=west]{$y_2$};
            \filldraw[blue] (2,1) circle (1.5pt) node [anchor=north west]{$C_3^4$};
            \filldraw[blue] (2.5,1.5) circle (1.5pt) node [anchor=north]{};
            \filldraw[blue] (1.5,.5) circle (1.5pt) node [anchor=west]{};
            \filldraw (-2,1) circle (1pt) node [anchor=west]{};
            \filldraw (-2.23,1.53) circle (1pt) node [anchor=west]{};
            \filldraw (-2.66,.66) circle (1pt) node [anchor=west]{};
            \filldraw (0,1) circle (1pt) node [anchor=west]{};
            \filldraw (-.69,.6) circle (1pt) node [anchor=west]{};
            \filldraw (-.33,1.33) circle (1pt) node [anchor=west]{};
            \filldraw (-2,.33) circle (1pt) node [anchor=west]{};
            \filldraw (-1,.66) circle (1pt) node [anchor=west]{};
            \filldraw (1,1.33) circle (1pt) node [anchor=west]{};
            \filldraw (2,1.66) circle (1pt) node [anchor=west]{};
        \end{tikzpicture}

    \caption{An example of a shoelace poset.}
    \label{fig:shoelace}
\end{figure}
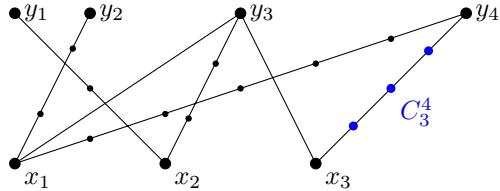

In order to prove that shoelace posets satisfy the $(n-2)!$ conjecture, we will partition labelings according to the location of the label $n-1$, and bound $|\T_x(P)|$ for the various elements $x$.

For the following lemma, we use the following notation: for $S$ a set and $f$ a function whose codomain is well-ordered, $\argmin_{S} f$ is the element $x \in S$ such that $f(x)$ is minimal. 
\begin{lem}
    \label{lemma:shoelace-necessary-tangled-condition}
    Let $P$ be a shoelace poset with minimal elements $x_1, \ldots, x_\ell$ and maximal elements $y_1, \ldots, y_m$. Let $L \in \T(P)$, $i \in [\ell]$, and $j \in [m]$ such that $L(y_j) = n-1$ and $L(x_i) = n$. If $|\mathcal{S}^j(P)| \ge 2$, then $x_i \in \mathcal{S}^j(P)$, $C_i^j \neq \varnothing$, and 
    \begin{equation*}
        \underset{\downarrow y_j \setminus \mathcal{S}^j(P)}{\argmin} L \in C_i^j.
    \end{equation*}
\end{lem}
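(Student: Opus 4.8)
The plan is to combine the characterization of tangled labelings from \cref{lemma:tangled-labelings-characterization} with the chain of inequalities \eqref{Inequals for label n-1}, exploiting the specific structure of shoelace posets — namely that $\downarrow y_j$ is a union of the chains $C_i^j \cup \{x_i\}$ over $x_i \in \mathcal{S}^j(P)$. First I would invoke \cref{cor:(n-r)-inequality-when-tangled}, which gives $x_i = L^{-1}(n) <_P L^{-1}(n-1) = y_j$. Since $x_i$ is minimal and $y_j$ is maximal, comparability forces $(x_i, y_j) \in \mathcal{S}(P)$, i.e.\ $x_i \in \mathcal{S}^j(P)$. This immediately settles the first assertion.

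Next I would set $w = L_{n-2}^{-1}(1)$. By \eqref{Inequals for label n-1} we have $w \le_P L^{-1}(n-1) = y_j$, so $w \in \downarrow y_j$, and by \cref{lemma:tangled-labelings-characterization} together with the minimality of $x_i = L^{-1}(n) = L_{n-2}^{-1}(2)$ we get $x_i <_P w$. Now the key structural observation: since $x_i <_P w \le_P y_j$ and $w \ne x_i$, the element $w$ must lie in the open interval $(x_i, y_j)_P = C_i^j$ or equal $y_j$. But $w = L_{n-2}^{-1}(1)$ cannot be maximal (a labeling in which the element labeled $1$ is maximal would already be sorted at that step, contradicting tangledness — more precisely, $L_{n-2}$ is not a natural labeling, and by the frozen-element argument in the proof of \cref{lemma:tangled-funnel-condition}, $w$ is not frozen, so $w \ne y_j$). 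Hence $C_i^j \ne \varnothing$ and $w \in C_i^j$. This is essentially the content of \cref{lemma:tangled-funnel-condition} specialized to the shoelace setting, and I would cite that lemma to produce $z \in \funnel(x_i)$ with $z \le_P y_j$; in a shoelace poset $\funnel(x_i) \cap \downarrow y_j \subseteq C_i^j$ (the only other funnel elements of $x_i$ are maximal elements $y_{j'}$ with $\mathcal{S}^{j'}(P) = \{x_i\}$, which are not $\le_P y_j$ unless $j' = j$, and $y_j \notin \funnel(x_i)$ since $|\mathcal{S}^j(P)| \ge 2$), so $z \in C_i^j$ and in particular $C_i^j \ne \varnothing$.

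Finally, for the $\argmin$ statement: I need to show the element of $\downarrow y_j \setminus \mathcal{S}^j(P)$ carrying the smallest $L$-label lies on $C_i^j$. Note $\downarrow y_j \setminus \mathcal{S}^j(P) = \bigsqcup_{x_{i'} \in \mathcal{S}^j(P)} C_{i'}^j$ is the disjoint union of the chains. Let $v = \argmin_{\downarrow y_j \setminus \mathcal{S}^j(P)} L$; I want $v \in C_i^j$. Suppose not, so $v \in C_{i'}^j$ for some $i' \ne i$. The idea is to trace the promotion chains: at each of the first $n-2$ promotion steps, the element labeled $1$ travels up through the poset, and one shows inductively (using that the chains are disjoint and that $x_{i'}$ with $i' \ne i$ is not the minimal basin element $L^{-1}(n)$) that once a small label enters one chain $C_{i'}^j$ it stays within $C_{i'}^j \cup \{y_j\}$; comparing how quickly labels are consumed along $C_i^j$ versus $C_{i'}^j$, the fact that $w = L_{n-2}^{-1}(1) \in C_i^j$ rather than in $C_{i'}^j$ forces the smallest label among $\downarrow y_j \setminus \mathcal{S}^j(P)$ to have been on $C_i^j$ initially. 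I expect this last step — making the promotion-chain tracking rigorous and correctly handling the interaction between the several chains hanging below $y_j$ — to be the main obstacle; the first two assertions are short consequences of results already in the excerpt, but pinning down the location of the $\argmin$ will require a careful case analysis of where the minimum label sits relative to $w$ and an appeal to \cref{lemma:labels-slide-down} along each chain.
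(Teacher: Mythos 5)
Your treatment of the first two assertions is correct and matches the paper: $x_i \in \mathcal{S}^j(P)$ follows from \cref{cor:(n-r)-inequality-when-tangled}, and $C_i^j \neq \varnothing$ follows by applying \cref{lemma:tangled-funnel-condition} and ruling out $y_j \in \funnel(x_i)$ via $|\mathcal{S}^j(P)| \ge 2$; your observation that $\funnel(x_i) \cap \downarrow y_j \subseteq C_i^j$ in a shoelace poset is also right.

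The third assertion, however, is where the real content of the lemma lies, and you leave it as an acknowledged sketch ("I expect this last step \dots to be the main obstacle"). That is a genuine gap, and the strategy you outline does not contain the idea that makes the argument close. Knowing that $w = L_{n-2}^{-1}(1)$ lies in $C_i^j$ is a statement about time $n-2$, while the $\argmin$ is a statement about the original labeling $L$; the chain of inequalities \eqref{Inequals for label n-1} tracks only the single trajectory of the label that becomes $1$ at time $n-2$ and says nothing directly about which of the several chains $C_{i'}^j$ initially carries the smallest label. The paper instead works at a different, earlier time: let $r$ be the \emph{first} index for which the $r$th promotion chain terminates at $y_j$. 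Minimality of $r$ gives $L_{r-1}^{-1}(n-r) = y_j$, so after that promotion the element labeled $n-1-r$ is the penultimate element $z$ of that chain; applying \cref{cor:(n-r)-inequality-when-tangled} \emph{at time $r$} yields $x_i = L_r^{-1}(n-r) <_P L_r^{-1}(n-1-r) = z$, which forces $z$ (and hence the whole ascending chain preceding it) into $C_i^j$. One then checks that every $z' \in \downarrow y_j \setminus \mathcal{S}^j(P)$ satisfies $r \le L(z')$ (either $z'$ is never touched before time $L(z')$, in which case the $L(z')$th chain starts at $z'$ and ends at $y_j$, or an earlier chain passes through $z'$ and hence reaches $y_j$), while the starting element of the $r$th chain has $L$-label exactly $r$ and lies in $C_i^j$; this pins the $\argmin$ down. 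Your plan of "comparing how quickly labels are consumed along $C_i^j$ versus $C_{i'}^j$" would have to reconstruct this, and in particular the application of the tangledness inequality at the intermediate time $r$ rather than at time $0$ or $n-2$ is the step your sketch is missing.
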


\begin{proof}
    Since $L$ is tangled, $L^{-1}(n) <_P L^{-1}(n-1)$ by \cref{cor:(n-r)-inequality-when-tangled}. Therefore, $x_i <_P y_j$, which implies $i \in \mathcal{S}^j(P)$. By \cref{lemma:tangled-funnel-condition}, there exists $z \in \funnel(x_i)$ such that $z \le_P y_j$. By the assumption that $|\mathcal{S}^j(P)| \ge 2$, we observe that $y_j \not\in \funnel(x_i)$. Therefore, $x_i <_P z <_P y_j$, so $C_i^j \neq \varnothing$.

    Next, let $r$ be the smallest positive integer such that the $r$th promotion chain ends in $y_j$. Denote the $r$th promotion chain by $(z_1, \ldots, z_n, y_j)$. Since $r$ is the smallest such positive integer, $y_j$ does not lie on the $q$th promotion chain for $q < r$, and hence $L_{r-1}^{-1}(n-1-(r-1)) = y_j$. Then, after the $r$th promotion, $L_r^{-1}(n-1-r) = z_n$. Since $L$ is a tangled labeling, \cref{cor:(n-r)-inequality-when-tangled} implies that 
    \begin{equation*}
        x_i = L_r^{-1}(n-r) <_P L_r^{-1}(n-1-r) = z_n.
    \end{equation*}
    Therefore, $z_n \in C_i^j$. Since $z_1 <_P \ldots <_P z_{n-1}$, the remaining elements $z_1, \ldots, z_{n-1}$ in the $r$th promotion chain are also on $C_i^j$. 

    Now, let $z \in\, \downarrow y_j \setminus \mathcal{S}^j(P)$ and let $t = L(z)$. Then either $L_{t-1}^{-1}(1) = z$ and the $t$th promotion chain ends in $y_j$, or $L_{t-1}^{-1}(1) <_P z$ and the $t'$-th promotion chain ends in $y_j$ for some $t' < t$. In either case, it follows that $r \le t$. Since the starting element of the $r$th promotion chain lies in $C_i^j$, we conclude that $\underset{\downarrow y_j \setminus \mathcal{S}^j(P)}{\argmin} L \in C_i^j$.
\end{proof}
    
Essentially, if a labeling on a shoelace poset is tangled, and $L(y_j) = n-1$, then the element with smallest label in $\downarrow y_j \setminus S^j(P)$ must be above the element labeled $n$. This is therefore a necessary condition for a labeling on a shoelace poset to be tangled. This will be instrumental in proving the following theorem.
  
\begin{thm}\label{thm:shoelace}
    If $P$ is a shoelace poset on $n$ elements and $z\in P$, then $|\T_z(P)| \le (n-2)!$. Equality holds if and only if there is a unique minimal element $x <_P z$.
\end{thm}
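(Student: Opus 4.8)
The plan is to stratify the elements of a shoelace poset $P$ on $n\ge2$ elements according to their role in the shoelace structure. Write the minimal elements as $x_1,\dots,x_\ell$ and the maximal elements as $y_1,\dots,y_m$; since $P$ is connected, every element of $P$ is either some $x_i$, some $y_j$, or an interior element of a unique lace $C_i^j$. Accordingly $z$ falls into exactly one of the cases (i) $z=x_i$; (ii) $z=y_j$ with $|\mathcal{S}^j(P)|=1$; (iii) $z\in C_i^j$; (iv) $z=y_j$ with $|\mathcal{S}^j(P)|\ge2$. Case (i) is immediate: by the remark after \cref{lemma:tangled-labelings-characterization} the element labeled $n-1$ in a tangled labeling is never minimal, so $\T_z(P)=\varnothing$; as $(n-2)!\ge1$ the inequality is strict, and since no element lies strictly below $z$ both sides of the claimed equivalence fail.

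In cases (ii) and (iii) the hypothesis pins $z$ above a single minimal element $x$ — the unique element of $\mathcal{S}^j(P)$ in (ii), and $x_i$ (with $z\in C_i^j$) in (iii) — and from the description of funnels of shoelace posets one checks that $x$ is a basin, that $z\in\funnel(x)$, and that $x$ is the only minimal element below $z$. If $L$ is a tangled $z$-labeling then $L^{-1}(n)<_P z$ by \cref{cor:(n-r)-inequality-when-tangled} and $L^{-1}(n)$ is minimal by \cref{lemma:tangled-labelings-characterization}, forcing $L^{-1}(n)=x$; conversely, \cref{lemma:sufficient-condition-tangled} shows every labeling with $L(x)=n$ and $L(z)=n-1$ is tangled. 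Hence $\T_z(P)$ is exactly the set of $(n-2)!$ labelings with $L(x)=n$ and $L(z)=n-1$, so $|\T_z(P)|=(n-2)!$ and there is a unique minimal element below $z$.

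Case (iv) is the heart of the argument; here no minimal element below $z=y_j$ is unique, so the claim demands the strict inequality $|\T_{y_j}(P)|<(n-2)!$. Write $\mathcal{S}^j(P)=\{x_{i_1},\dots,x_{i_s}\}$ with $s\ge2$. If every lace $C_{i_t}^j$ is empty then \cref{lemma:shoelace-necessary-tangled-condition} gives $\T_{y_j}(P)=\varnothing$; otherwise put $C=\sum_t|C_{i_t}^j|\ge1$. For the upper bound, \cref{lemma:shoelace-necessary-tangled-condition} tells us that a tangled $L$ with $L^{-1}(n)=x_{i_t}$ must have $C_{i_t}^j\neq\varnothing$ and the minimum of $L$ on $\{y_j\}\cup\bigcup_r C_{i_r}^j$ — attained on a lace element, since $L(y_j)=n-1$ exceeds every lace label — lying in $C_{i_t}^j$. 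Among the $(n-2)!$ labelings with $L(x_{i_t})=n$ and $L(y_j)=n-1$, exactly a fraction $|C_{i_t}^j|/C$ have this property, so summing over $t$ gives $|\T_{y_j}(P)|\le(n-2)!$.

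The remaining, and main, step is to show this bound is not attained, for which it suffices to produce one necessary-condition labeling that is not tangled. Fix $t$ with $C_{i_t}^j\neq\varnothing$, set $x=x_{i_t}$, and pick $x'=x_{i_{t'}}$ with $t'\ne t$. Choose a linear extension $e_1,\dots,e_{n-1}$ of $P\setminus\{x\}$ with $e_1=x'$, with $C_{i_t}^j$ preceding every other lace element, and with $e_{n-1}=y_j$ (such an extension exists since distinct laces and the minimal elements are pairwise incomparable), and set $L^*(e_r)=r$, $L^*(x)=n$. Then $L^*$ meets the necessary conditions above, so it is one of the $(n-2)!$ counted labelings; I claim $\order(L^*)\le n-2$, so it is not tangled. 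Indeed, since $x$ is minimal and bears the label $n$, it never lies on a promotion chain during the first $n-2$ promotions, whence $\partial^r L^*(x)=n-r$ and the promotion chains of $L^*$ coincide with those of the linear-extension labeling $L^*|_{P\setminus\{x\}}$ of $P\setminus\{x\}$; consequently $\partial^r L^*$ restricted to $P\setminus\{x\}$ has the same relative order of labels as $\partial^r(L^*|_{P\setminus\{x\}})$, a linear extension of $P\setminus\{x\}$. Thus $\partial^{n-2}L^*$ restricts to a natural labeling of $P\setminus\{x\}$ with $\partial^{n-2}L^*(x)=2$, and it is natural on all of $P$ precisely when $(\partial^{n-2}L^*)^{-1}(1)\not>_P x$. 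Establishing this last inequality is the crux: one shows that because $x'$, an original minimal element (hence not above $x$), sits first in the linear extension, the label $1$ after $n-2$ promotions of $L^*|_{P\setminus\{x\}}$ settles on an original minimal element distinct from $x$, rather than on the bottom of the lace $C_{i_t}^j$. Granting this, $\order(L^*)\le n-2$, so $L^*\notin\T_{y_j}(P)$ and $|\T_{y_j}(P)|\le(n-2)!-1<(n-2)!$, completing case (iv) and the theorem.
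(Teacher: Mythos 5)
Your stratification of $z$ and your handling of cases (i)--(iii) coincide with the paper's proof, and your upper bound in case (iv) --- observing that $\min_{\downarrow y_j\setminus\mathcal{S}^j(P)}L$ is attained uniformly among the lace elements, so that the labelings meeting the necessary condition of \cref{lemma:shoelace-necessary-tangled-condition} with $L^{-1}(n)=x_{i_t}$ number $(n-2)!\,|C_{i_t}^j|/C$ --- is a correct repackaging of the paper's count over equivalence classes of restrictions to $P\setminus\mathcal{S}^j(P)$. The gap is the strict inequality in case (iv). You correctly identify the crux (locating $(L^*_{n-2})^{-1}(1)$) and then do not prove it; worse, the claim you ask the reader to grant fails for your construction as stated. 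Take the shoelace poset $P$ with minimal elements $x_1,x_2$, maximal elements $y_1,y_2$, $\mathcal{S}(P)=\{(x_1,y_1),(x_2,y_1),(x_2,y_2)\}$, $C_1^1=\{c\}$ and all other laces empty, so $n=5$; let $z=y_j=y_1$, $x=x_{i_t}=x_1$, $x'=x_2$. The linear extension $(x_2,c,y_2,y_1)$ of $P\setminus\{x_1\}$ satisfies every condition you impose ($e_1=x'$; the lace $C_1^1$ precedes every other lace element, vacuously; $e_{n-1}=y_1$), and yields $L^*$ with $x_1\mapsto 5$, $x_2\mapsto 1$, $c\mapsto 2$, $y_2\mapsto 3$, $y_1\mapsto 4$. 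Direct computation gives $(L^*_3)^{-1}(1)=c>_P x_1$, so by \cref{lemma:tangled-labelings-characterization} this $L^*$ \emph{is} tangled. The failure mode is that the first promotion chain out of $x'=x_2$ terminates at $y_2$ rather than at $y_1$, which shifts the schedule on which label $1$ visits and leaves the bottom of the lace; the heuristic that placing $x'$ first forces label $1$ to sit on an original minimal element at time exactly $n-2$ is simply not true. (For this $P$ the extension $(x_2,y_2,c,y_1)$ does give a non-tangled witness, so your strategy is salvageable here, but the construction as specified does not distinguish the good choice from the bad one, and no argument is offered for either.)

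For comparison, the paper obtains strictness by a different mechanism: it fixes the restriction of $L$ to $P\setminus\mathcal{S}^j(P)$, notes that within each such class the position of the label $n$ is forced by \cref{lemma:shoelace-necessary-tangled-condition}, and then exhibits a class (label $1$ on $\mathcal{S}^j(P)$, label $2$ on a lace below $y_j$) in which an additional necessary condition eliminates at least one of the remaining $(s-1)!$ completions. Whichever route you take, the step ``some labeling satisfying the necessary condition is not tangled'' requires an actual analysis of where $L_{n-2}^{-1}(1)$ lands --- it cannot be read off from the shape of the initial linear extension alone --- so as written your case (iv) does not establish $|\T_{y_j}(P)|<(n-2)!$.
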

\begin{proof}
Let $x_1, \ldots, x_\ell$ be minimal elements of $P$, and $y_1, \ldots, y_m$ be maximal elements of $P$. The element $z$ can either be a minimal element, an element on a chain $C_i^j$ for some $i$ and $j$, or a maximal element. There is a unique minimal element $x <_P z$ only if $z\in C_i^j$ or if $z$ is one of the maximal elements $y_j$ and $|\mathcal{S}^j(P)| = 1$. For convenience, we set $s := |\mathcal{S}^j(P)|$. Below we separate the cases mentioned above and claim that equality holds only in Case 2 and Case 3.

\textbf{Case 1: Suppose $z$ is a minimal element.} In this case, it is impossible to find an element labeled $n$ such that $L^{-1}(n) <_P L^{-1}(n-1)=z$. So by \cref{cor:(n-r)-inequality-when-tangled}, $|\T_z(P)| = 0$.

\textbf{Case 2: Suppose $z$ lies on some chain $C_i^j$.} In this case there is a unique basin $x_i$ that in $\downarrow z$. Any tangled labeling $L \in \T_z(P)$ must satisfy $L(z) = n-1$ and $L(x_i) = n$. There are at most $(n-2)!$ such labelings, and by \cref{lemma:sufficient-condition-tangled} all such labelings are tangled so $|\T_z(P)| = (n-2)!$. 

\textbf{Case 3: Suppose $z$ is a maximal element $y_j$ and $s = 1$.} Since $s=1$, for any tangled labeling $L$, $L^{-1}(n)$ must be the unique $x_i$ satisfying $x_i<_P y_j=z$. There are $(n-2)!$ such labelings, and by \cref{lemma:sufficient-condition-tangled} all such labelings are tangled. Thus, $|\T_z(P)| = (n-2)!$. 

\textbf{Case 4: Suppose $z$ is a maximal element $y_j$ and $s \ge 2$.} Partition $\Lambda(P)$ into equivalence classes, where two labelings $L$ and $L'$ belong to the same equivalence class if and only if they restrict to the same labeling on $P \setminus \mathcal{S}^j(P)$. Labelings in $\T_z(P)$ require $y_j$ to be labeled $n-1$ and some element in $\mathcal{S}^j(P)$ to be labeled $n$. The number of equivalence classes where this is possible is $(n-2)(n-3) \cdots s$. In each such equivalence class, the tangled labelings $L$ have only one choice of $L^{-1}(n)$ according to \cref{lemma:shoelace-necessary-tangled-condition}. Therefore, at most $(s-1)!$ labelings in each equivalence class are tangled. Consequently, $|\T_z(P)| \le (n-2)(n-3) \cdots s(s-1)! = (n-2)!$.

With a little more careful analysis, one can conclude that at least one of the equivalence classes has strictly fewer than $(s-1)!$ labelings. Consider an equivalence class where the label 1 is in $\mathcal{S}^j(P)$ and the label 2 is in $\downarrow y_j \setminus \mathcal{S}^j(P)$. Then in this equivalence class, there is the additional restriction $L^{-1}(1) \not<_P L^{-1}(2)$. Thus, there are strictly fewer than $(s-1)!$ tangled labelings, so $|\T_z(P)| < (n-2)!$.
\end{proof}

Notice that \cref{thm:shoelace} shows that shoelaces satisfy \cref{conj:(n-1)-refinement}, and therefore also satisfy \cref{conjecture:hodges} and \cref{conj:main-conjecture}.

We have proven an upper bound on the number of tangled labelings of shoelaces, but we are also able to enumerate the exact number of tangled labelings for a specific subfamily of shoelace posets called \emph{$W$-posets}. In general, few explicit formulas for tangled labelings are known. The proof of this formula will also involve counting the number of tangled labelings by fixing the label $n-1$.

\begin{defn}
    Given $a,b,c,d \in \mathbb{Z}_{\ge 0}$, the \definition{$W$-poset} $W_{a,b,c,d}$ is a poset on $a+b+c+d+3$ elements: $\alpha_1, \ldots, \alpha_a$, $\beta_1, \ldots, \beta_b$, $\gamma_1, \ldots, \gamma_c$, $\delta_1, \ldots, \delta_d$, $x,y,z$. The partial order has covering relations $\alpha_i \lessdot_W \alpha_{i+1}$, $\beta_i \lessdot_W \beta_{i+1}$, $\gamma_i \lessdot_W \gamma_{i+1}$, $\delta_i \lessdot_W \delta_{i+1}$, $x \lessdot_W \alpha_1$, $x \lessdot_W \beta_1$, $\beta_b \lessdot_W y$, $\gamma_c \lessdot_W y$, $z \lessdot_W \gamma_1$, and $z \lessdot_W \delta_1$. 
\end{defn}

The poset $W_{a,b,c,d}$ can be viewed as the shoelace poset with the set of minimal elements $\{x,z\}$, the set of maximal elements $\{\alpha_a, y, \delta_d\}$ and $\mathcal{S}(P) = \{(x,\alpha_a),(x,y),(z,y),(z,\delta_d)\}$.

\begin{example}
The Hasse diagram for $W_{2,2,1,1}$ is shown in \cref{figure:W_2211}. There are 34,412 tangled labelings of this poset. 

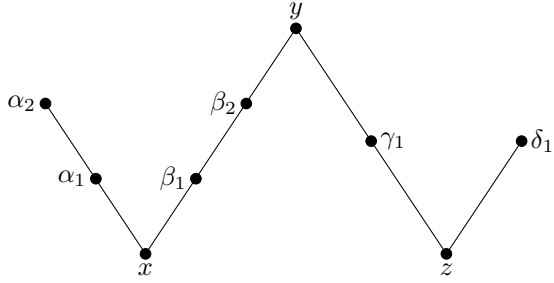
\begin{figure}[!h]
        \centering
        \begin{tikzpicture}
            \draw (-3.33,2) -- (-2,0) --  (0,3) -- (2,0) -- (3,1.5);
            \filldraw[black] (-3.33,2) circle (2pt) node [anchor=east] {$\alpha_2$};
            \filldraw[black] (-2.66,1) circle (2pt) node [anchor=east] {$\alpha_1$};
            \filldraw[black] (-2,0) circle (2pt) node [anchor=north] {$x$};
            \filldraw[black] (-1.33,1) circle (2pt) node [anchor=east] {$\beta_1$};
            \filldraw[black] (-.66,2) circle (2pt) node [anchor=east] {$\beta_2$};
            \filldraw[black] (0,3) circle (2pt) node [anchor=south] {$y$};
            \filldraw[black] (1,1.5) circle (2pt) node [anchor=west] {$\gamma_1$};
            \filldraw[black] (2,0) circle (2pt) node [anchor=north] {$z$};
            \filldraw[black] (3,1.5) circle (2pt) node [anchor=west] {$\delta_1$};
        \end{tikzpicture}
        \caption{The poset $W_{2,2,1,1}$.}
        \label{figure:W_2211}
    \end{figure}
\end{example}

\begin{thm}\label{thm:w-tangled}
    Let $a,b,c,d$ be four positive integers and $n = a+b+c+d+3$. Let
    \begin{align*}
        X &= \binom{n-2}{a} \sum_{i=0}^{b-1} \sum_{j=0}^d (d-j+1)\binom{i+j+c-1}{i,j,c-1}\text{, and}\\
        Z &= \binom{n-2}{d} \sum_{i=0}^{c-1} \sum_{j=0}^a (a-j+1)\binom{i+j+b-1}{i,j,b-1}.
    \end{align*}
    
    Then the number of tangled labelings of $W_{a,b,c,d}$ is given by $(n-2)(n-2)! - a!b!c!d!(X+Z)$.
\end{thm}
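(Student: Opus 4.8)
The plan is to count the tangled labelings of $W_{a,b,c,d}$ by partitioning $\T(W_{a,b,c,d})$ according to the element $w$ with $L(w) = n-1$, i.e.\ to write $|\T(W_{a,b,c,d})| = \sum_{w} |\T_w(W_{a,b,c,d})|$ as in \cref{equation:tangled-refinement}, and then to show that the total equals $(n-2)(n-2)!$ minus a "defect" term, where the defect term is exactly $a!b!c!d!(X+Z)$. The key structural observation is that $W_{a,b,c,d}$ is a shoelace poset with minimal elements $\{x,z\}$ and $\mathcal{S}(W) = \{(x,\alpha_a),(x,y),(z,y),(z,\delta_d)\}$, and by \cref{lemma:tangled-labelings-characterization} and \cref{corollary:tangled-labeling-n-(n-1)-locations} any tangled labeling has $L^{-1}(n) \in \{x,z\}$ (the only basins) and $L^{-1}(n-1)$ strictly above it. So a tangled labeling is either an "$x$-type" labeling ($L(x) = n$) or a "$z$-type" labeling ($L(z) = n$), and these are disjoint.

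First I would handle the element $w$ with $L(w) = n-1$ lying on one of the "private" chains — that is, $w \in \{\alpha_1,\dots,\alpha_a\}$ (forcing $L(x)=n$) or $w \in \{\delta_1,\dots,\delta_d\}$ (forcing $L(z)=n$) or $w$ on the interior of a chain below a maximal element covered by a unique minimal element. For these, Case~2/Case~3 of the proof of \cref{thm:shoelace} (via \cref{lemma:sufficient-condition-tangled}) says \emph{every} labeling with $L(w)=n-1$ and $L(\cdot)=n$ on the appropriate minimal element is tangled, contributing a clean $(n-2)!$ each. The subtle cases are $w = \alpha_a$ (maximal, above only $x$, so $s=1$: again a full $(n-2)!$), $w=\delta_d$ (symmetric), and $w = y$, which is above \emph{both} $x$ and $z$ — this is Case~4 of \cref{thm:shoelace} with $s=2$. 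Here, by \cref{lemma:shoelace-necessary-tangled-condition}, once the labeling restricted to $W \setminus \{x,z\}$ is fixed, whether a completion is tangled is governed by whether $\argmin_{\downarrow y \setminus \{x,z\}} L$ lies in the $\beta$-chain (forcing $L(x)=n$) or the $\gamma$-chain (forcing $L(z)=n$) — and one must also check the extra constraint that the element labeled $1$ is not below the element labeled $2$ when both of those small labels fall appropriately. Summing the valid completions over all placements of labels on $\downarrow y \setminus \{x,z\} = \{\beta_1,\dots,\beta_b,\gamma_1,\dots,\gamma_c\}$ and the free labels is where the multinomial coefficients $\binom{i+j+c-1}{i,j,c-1}$ and $\binom{i+j+b-1}{i,j,b-1}$ will come from: $i,j$ track how many of the "small" labels land in the $\beta$-chain versus on the free positions, and the $(d-j+1)$ / $(a-j+1)$ factors and the outer $\binom{n-2}{a}$, $\binom{n-2}{d}$ count the ways to slot in the remaining labels on the $\alpha$- and $\delta$-chains.

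The cleanest bookkeeping is probably to \emph{not} compute $|\T_y|$ directly but instead to compute, for each of the two types ($L(x)=n$ and $L(z)=n$), the number of \emph{non-tangled} labelings among all $(n-2)!$ labelings with that minimal element labeled $n$ and $y$ labeled $n-1$ — call these defect counts $D_x$ and $D_z$. A labeling with $L(x) = n$, $L(y) = n-1$ fails to be tangled precisely when $\argmin_{\downarrow y \setminus \{x,z\}} L \notin C_x^y = \{\beta_1,\dots,\beta_b\}$, i.e.\ when the smallest label among $\{\beta_1,\dots,\beta_b,\gamma_1,\dots,\gamma_c\}$ sits on a $\gamma$ (by \cref{lemma:shoelace-necessary-tangled-condition}, or more carefully: when the relevant promotion chain terminates through the $\gamma$ side), together with the boundary sub-case flagged at the end of the proof of \cref{thm:shoelace}. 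Counting these bad labelings is a direct (if fiddly) combinatorial sum: choose where the $a$ labels on the $\alpha$-chain go, then distribute the rest between the $\beta$-, $\gamma$-, and $\delta$-chains subject to the "$\gamma$ wins the minimum" condition, yielding $a!b!c!d! \cdot X$ after factoring out the internal orderings of the four chains (each chain's labels must appear in increasing order, contributing the $a!b!c!d!$ when we instead count ordered fillings). By the $x \leftrightarrow z$, $\alpha \leftrightarrow \delta$, $\beta \leftrightarrow \gamma$ symmetry of $W_{a,b,c,d}$, the $z$-type defect is $a!b!c!d! \cdot Z$. Then
\[
    |\T(W_{a,b,c,d})| = \underbrace{\big((n-2)-1\big)(n-2)!}_{\text{all } w \neq y,\text{ one per other slot}} + \underbrace{\big(2(n-2)! - a!b!c!d!(X+Z)\big)}_{|\T_y|} = (n-2)(n-2)! - a!b!c!d!(X+Z),
\]
where the first bracket counts the $(n-2)-1 = n-3$ slots $w \neq y$ among the $n-2$ non-minimal elements, each contributing exactly $(n-2)!$, and adding the two full $(n-2)!$ from the two types at $w=y$ gives the stated total.

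The main obstacle is the careful enumeration of the defect in the $w=y$ case: I must (a) correctly characterize, via \cref{lemma:shoelace-necessary-tangled-condition} and the promotion-chain analysis in its proof, exactly which labelings with $L(y)=n-1$ are non-tangled — in particular handling the boundary sub-case where labels $1$ and $2$ interact — and (b) translate "the $\gamma$-chain holds the smallest of the mixed labels, with the remaining labels freely interleaved on the other chains" into the triple-multinomial sums defining $X$ and $Z$, getting the index ranges ($0 \le i \le b-1$, $0 \le j \le d$, etc.) and the linear factors $(d-j+1)$, $(a-j+1)$ exactly right. Everything else — the shoelace Cases 1–3 contributions and the final arithmetic — follows routinely from \cref{thm:shoelace} and its supporting lemmas.
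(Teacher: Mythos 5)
Your overall strategy is the same as the paper's: partition $\T(W_{a,b,c,d})$ by the element labeled $n-1$, get a clean $(n-2)!$ from each of the $a+b+c+d = n-3$ chain elements and the maximal elements $\alpha_a,\delta_d$ via Cases 2--3 of \cref{thm:shoelace}, and then do a careful count at $w=y$. But your bookkeeping at $y$ contains a genuine error. You take the pool of candidates to be all $2(n-2)!$ labelings with $L(y)=n-1$ and $L^{-1}(n)\in\{x,z\}$ and claim the defect is $a!b!c!d!(X+Z)$, giving $|\T_y(W)| = 2(n-2)! - a!b!c!d!(X+Z)$. This is too large by $(n-2)!$: for each choice of basin labeled $n$ (say $L(x)=n$), the half of the $(n-2)!$ labelings in which the $\gamma$-chain carries the smaller of $L(\beta_1),L(\gamma_1)$ are \emph{automatically} non-tangled (tangledness would force $L(z)=n$ by \cref{lemma:shoelace-necessary-tangled-condition}), and these are \emph{not} counted by $X$ --- e.g.\ for $a=b=c=d=1$ one has $a!b!c!d!\,X = 15$ while that dead half-pool alone has $\tfrac12(n-2)! = 60$ labelings. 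The correct count is $|\T_y(W)| = (n-2)! - a!b!c!d!(X+Z)$, and indeed your own displayed equation $(n-3)(n-2)! + 2(n-2)! = (n-2)(n-2)!$ is arithmetically false (the left side is $(n-1)(n-2)!$).

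Relatedly, your characterization of the remaining failures is not right: the condition $\argmin_{\downarrow y\setminus\{x,z\}}L \in C_x^y$ from \cref{lemma:shoelace-necessary-tangled-condition} is only \emph{necessary} for tangledness, so "fails to be tangled precisely when the argmin sits on a $\gamma$" is false as an iff, and the "labels $1$ and $2$" boundary sub-case from the end of the proof of \cref{thm:shoelace} is not the missing refinement. What $X$ actually counts (within the surviving half-pool $L(x)=n$, $L(y)=n-1$, $L(\beta_1)<L(\gamma_1)$, chains normalized increasing) are the labelings that still fail because a promotion chain launched from $z$ reaches $y$ too early; concretely, $L$ is non-tangled there iff $L(z) < L(\beta_1)$ and no $\delta_i$ satisfies $L(\beta_1) < L(\delta_i) < L(\gamma_1)$. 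That condition about $L(z)$ and the $\delta$-labels is precisely where the factor $(d-j+1)$ (the admissible positions of $L(z)$ relative to the $\delta$'s below $L(\beta_1)$) comes from, which your description of the multinomial sums does not capture. With the half-pool correction and this characterization in place, the rest of your outline matches the paper's proof.
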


\begin{proof}
    Fix $a,b,c,d$ and write $W=W_{a,b,c,d}$. By \cref{equation:tangled-refinement}, it suffices to compute $|\T_p(W)|$ as $p$ ranges over elements of $W$. If $p = x$ or $p = z$, then $|\T_p(W)| = 0$ due to Case 1 in the proof of \cref{thm:shoelace}. If $p = \alpha_i$ or $p = \beta_i$, then this belongs to Cases 2 and 3 in the proof of \cref{thm:shoelace}, and so $|\T_p(W)| = (n-2)!$. Similarly, if $p = \gamma_i$ or $p = \delta_i$, then $|\T_p(W)| = (n-2)!$. With the exception of $p = y$, we have counted $(a+b+c+d)(n-2)! = (n-3)(n-2)!$ tangled labelings.    

    Let us now count the number of tangled labelings $L$ that satisfy $L(y) = n-1$. Observe that permuting the labels $L(\alpha_1), \ldots, L(\alpha_a)$ does not change whether or not $L$ is tangled. Similarly, permuting the labels $L(\beta_1), \ldots, L(\beta_b)$, the labels $L(\gamma_1), \ldots, L(\gamma_c)$, and the labels $L(\delta_1), \ldots, L(\delta_d)$ among themselves does not change whether or not $L$ is tangled. Thus, we will additionally impose the conditions $L(\alpha_1) < \cdots < L(\alpha_a)$, $L(\beta_1) < \cdots < L(\beta_b)$, $L(\gamma_1) < \cdots < L(\gamma_c)$, and $L(\delta_1) < \cdots < L(\delta_d)$. To obtain the total number of tangled labelings, we will count the number of such tangled labelings $L$ satisfying these conditions and then multiply by $a!b!c!d!$.

    We split into two cases. The first case is where $L(\beta_1) < L(\gamma_1)$. Let $m_\beta = L(\beta_1)$. In this case, a necessary condition for $L$ to be tangled is that $L(x) = n$. To see this, suppose otherwise that $L(z) = n$. Then note that  $L_{m_\beta}^{-1}(n-1-m_\beta) \in [x, y)$. This is because for the first $m_\beta$ promotions, the only promotion chains ending in $y$ are those that begin with some element in $[x,y)$ and furthermore, there exists at least one promotion chain ending in $y$, namely  the $m_\beta$-th one. It follows that $L_{n-2}^{-1}(1) \not >_W z$ so $L$ cannot be tangled if $L(z) = n$ (\cref{lemma:tangled-labelings-characterization}).
    
    Now, the total number of labelings that satisfy all these conditions is given by $\frac{1}{2}\binom{n-2}{a,b,c,d,1}$, since it amounts to choosing $a$ of the labels in $[n-2]$ for $\alpha_1, \ldots, \alpha_a$, $b$ of the labels for the $\beta$s and so on. To account for the condition $L(\beta_1) < L(\gamma_1)$, we divide by 2 because there is an involution swapping $L(\beta_1)$ and $L(\gamma_1)$. We will now subtract the number of labelings satisfying these conditions that are \emph{not} tangled. 

    Given that $L$ satisfies all the conditions above, $L$ is \emph{not} tangled if and only if $L(z) < L(\beta_1)$ and there do not exist $\delta_i$
    such that $L(\beta_1) < L(\delta_i) < L(\gamma_1)$. To see this, observe that $L$ is not tangled if and only if there is some $j < m_\beta$ where the $j$th promotion chain begins with an element in $[z, y)$ and ends in $y$. Since $L(\beta_1) < L(\gamma_1)$, this can occur only if $L(z) < L(\beta_1)$. Now, let $\delta_{i} <_W \delta_{i+1} <_W \cdots <_W \delta_{j}$ be all the $\delta$'s with labels in between $L(z)$ and $L(\gamma_1)$. Then the $L(z), L(\delta_i), \ldots, L(\delta_{j-1})$th promotion chains would all begin with $z$ and end with some $\delta_k$, and the $L(\delta_j)$th promotion chain would begin with $z$ and end with $y$. Thus, in order for $L$ to not be tangled we must have $L(\delta_j) < L(\beta_1)$. And conversely, if we do have $L(\delta_j) < L(\beta_1)$ then $L$ is not tangled since the $L(\delta_j)$th promotion chain would start with $z$ and end with $y$.

    Now, we wish to count the number of such labelings $L$. To do so, observe that the labels of the $\alpha$'s are subject to no constraints. We will suppose that $L(\delta_1) < \cdots < L(\delta_{d-j}) < L(\beta_1) < \cdots < L(\beta_{b-i}) < L(\gamma_1)$ and sum over $0 \le i \le b-1$ and $0 \le j \le d$.

    For each $i,j$ there are $(d-j+1)$ choices of what $L(z)$ could be and $\binom{i+j+c-1}{i,j, c-1}$ choices for the labels greater than $L(\gamma_1)$.
    This yields
    \begin{equation*}
        X = \binom{n-2}{a} \sum_{i=0}^{b-1} \sum_{j=0}^d (d-j+1)\binom{i+j+c-1}{i,j,c-1}.
    \end{equation*}
    
    By a similar argument, if $L(\gamma_1) < L(\beta_1)$ then a necessary condition for $L$ to be tangled is $L(z) = n$. The number of labelings satisfying these conditions is $\frac{1}{2}\binom{n-2}{a,b,c,d,1}$ and the number of these labelings that are not tangled is
    \begin{equation*}
        Z = \binom{n-2}{d} \sum_{i=0}^{c-1} \sum_{j=0}^a (a-j+1)\binom{i+j+b-1}{i,j,b-1}.
    \end{equation*}
    Thus, the number of tangled labelings $L$ that satisfy $L(y) = n-1$ is
    \begin{align*}
        a!b!c!d!\left(\frac{1}{2}\binom{n-2}{a,b,c,d,1} - X + \frac{1}{2}\binom{n-2}{a,b,c,d,1} - Z\right) &= a!b!c!d!\binom{n-2}{a,b,c,d,1 } - a!b!c!d!(X + Z)\\
        &= (n-2)! - a!b!c!d!(X+Z).
    \end{align*}
    Adding this to the $(n-3)(n-2)!$ tangled labelings where $L^{-1}(n-1) \neq y$ yields the desired formula.
\end{proof}

In principle, one could compute the exact number of tangled labelings for various subsets of shoelace posets in this way. Even for the class of $W$-posets, however, the computations appear rather unwieldy.

%%%%%%%%%%%%%%%%%%%%%%%%%%%%%%%%%%%%%%%%%%%%%%%%%%%

\section{Generating Functions}
\label{section:generating-functions}

In the previous sections, we focused on counting the number of tangled labelings of various posets and analyzed their upper bounds. In this section, we are interested in exploring the number of labelings of a poset $P$ on $n$ elements that have a fixed order $k$. Recall that the order of a labeling $L$ is the minimal integer $k \ge 0$ such that $L_k$ is sorted. Such labelings we will call $k$-sorted; see \cref{defn:k-tangled}. Dual to $k$-sorted labelings are $k$-tangled labelings that have order $n-k-1$. We define two kinds of generating functions (\cref{defn:gfs}) on $P$ and investigate how these generating functions change if we attach some minimal elements to $P$. Our result provides a simple and unified proof of enumerating tangled labelings and quasi-tangled labelings in \cite{defant-kravitz-2023} and \cite{hodges-2022} (see \cref{rmk:(quasi)_tangled_p(l)}).

\begin{defn}\label{defn:k-tangled}
    Let $P$ be an $n$-element poset. A labeling $L \in \Lambda(P)$ is said to be \definition{$k$-sorted} if $\order(L) = k$ and is said to be \definition{$k$-tangled} if $\order(L) = n-k-1$.
\end{defn}

Observe that natural labelings are synonymous with $0$-sorted labelings and tangled labelings are synonymous with $0$-tangled labelings. Quasi-tangled labelings introduced in \cite{hodges-2022} correspond exactly to $1$-tangled labelings.

\begin{defn}\label{defn:gfs}
    Let $P$ be an $n$-element poset. The  \definition{sorting generating function} of $P$ is defined to be 
    \begin{equation*}
    %\label{eq.gf1}
        f_P(q) \coloneqq \sum_{L \in \Lambda(P)} q^{\order(L)} = \sum_{i=0}^{n-1}a_iq^{i},
    \end{equation*}
    where $a_i$ counts the number of $i$-sorted labelings of $P$. The  \definition{cumulative generating function} of $P$ is defined to be  
    \begin{equation*}
    %\label{eq.gf2}
        g_P(q) \coloneqq \sum_{i=0}^{n-1}b_iq^i,
    \end{equation*}
    where $b_i \coloneqq a_0 + a_1 + \cdots + a_i$ is the partial sum of $a_i$'s. In particular, $b_{n-1} = n!$. 
\end{defn}

\begin{example}\label{ex:fg}
We list all the six labelings and their orders of the $\Lambda$-shaped poset $P$ in \cref{tab:posetV}. The sorting generating function and cumulative generating function of $P$ are given by $f_P(q)=2+4q$ and $g_P(q) = 2+6q+6q^2$.

\begin{table}[htb!]
    \centering
    \begin{tabular}{c|c|c|c|c|c|c}
      Labeling   & \begin{tikzpicture}
        \draw (-0.55,0) -- (0,1) --  (0.55,0);
        \filldraw[black] (-0.55,0) circle (2pt) node [anchor=east] {$2$};
        \filldraw[black] (0,1) circle (2pt) node [anchor=south] {$1$};
        \filldraw[black] (0.55,0) circle (2pt) node [anchor=west] {$3$};
        \end{tikzpicture} & \begin{tikzpicture}
        \draw (-0.55,0) -- (0,1) --  (0.55,0);
        \filldraw[black] (-0.55,0) circle (2pt) node [anchor=east] {$3$};
        \filldraw[black] (0,1) circle (2pt) node [anchor=south] {$1$};
        \filldraw[black] (0.55,0) circle (2pt) node [anchor=west] {$2$};
        \end{tikzpicture} & \begin{tikzpicture}
        \draw (-0.55,0) -- (0,1) --  (0.55,0);
        \filldraw[black] (-0.55,0) circle (2pt) node [anchor=east] {$1$};
        \filldraw[black] (0,1) circle (2pt) node [anchor=south] {$2$};
        \filldraw[black] (0.55,0) circle (2pt) node [anchor=west] {$3$};
        \end{tikzpicture} & \begin{tikzpicture}
        \draw (-0.55,0) -- (0,1) --  (0.55,0);
        \filldraw[black] (-0.55,0) circle (2pt) node [anchor=east] {$3$};
        \filldraw[black] (0,1) circle (2pt) node [anchor=south] {$2$};
        \filldraw[black] (0.55,0) circle (2pt) node [anchor=west] {$1$};
        \end{tikzpicture} & \begin{tikzpicture}
        \draw (-0.55,0) -- (0,1) --  (0.55,0);
        \filldraw[black] (-0.55,0) circle (2pt) node [anchor=east] {$1$};
        \filldraw[black] (0,1) circle (2pt) node [anchor=south] {$3$};
        \filldraw[black] (0.55,0) circle (2pt) node [anchor=west] {$2$};
        \end{tikzpicture} & \begin{tikzpicture}
        \draw (-0.55,0) -- (0,1) --  (0.55,0);
        \filldraw[black] (-0.55,0) circle (2pt) node [anchor=east] {$2$};
        \filldraw[black] (0,1) circle (2pt) node [anchor=south] {$3$};
        \filldraw[black] (0.55,0) circle (2pt) node [anchor=west] {$1$};    
        \end{tikzpicture} \\
      \hline
      Order & 1 & 1 & 1 & 1 & 0 & 0 
    \end{tabular}
    \caption{The six labelings and their corresponding orders for the $\Lambda$-shaped poset.}
    \label{tab:posetV}
\end{table}
\end{example}

We now define precisely what it means to attach $k$ minimal elements to a poset. The operation we need is the ordinal sum of two posets $P$ and $Q$.
\begin{defn}
    Let $P$ and $Q$ be two posets. The \definition{ordinal sum} of $P$ and $Q$ is the poset $P \oplus Q$ on the elements of the disjoint union $P \sqcup Q$ such that $s \leq t$ in $P \oplus Q$ if and only if at least one of the following conditions hold:
    \begin{enumerate}
        \item $s,t \in P$ and $s \leq_P t$, or
        \item $s,t \in Q$ and $s \leq_Q t$, or
        \item $s \in P$ and $t \in Q$. 
    \end{enumerate}
\end{defn}

The $n$-element chain will be denoted $C_n$ and the $k$-element antichain will be denoted $T_k$. In the language of ordinal sums, we can view $C_n$ as the ordinal sum of $n$ copies of $C_1$'s and we can view attaching $k$ minimal elements to a poset $P$ as the ordinal sum $T_k \oplus P$. Our main result in this section provides a way to compute the sorting generating function $f_{T_k \oplus P}(q)$ from $f_P(q)$.

Define a lower-triangular $n \times n$ matrix $X_n(k)$ whose $(i,j)$ entry $x_{ij}$ is given by
\[
x_{ij} \coloneqq \begin{cases}
          k!\binom{k+i-2}{k-1} & \text{if $i>j$,} \\
          k!\binom{k+i-1}{k} & \text{if $i=j$,} \\
          0 & \text{otherwise.}
          \end{cases}
\]

Recall that given a labeling on a poset, the \definition{standardization} of the restricted labeling on a subposet $Q$ shifts the labels to those from 1 to $|Q|$; see \cref{standard-def}.

\begin{thm}\label{thm:attach_k_minimal_gf}
    Let $P$ be an $n$-element poset and $f_P(q) = \sum_{i=0}^{n-1}a_iq^i$ be the sorting generating function of $P$. Write the sorting generating function of $T_k \oplus P$ as $f_{T_k \oplus P}(q) = \sum_{i=0}^{n+k-1}a'_iq^i$. 
 Let $v=(a_0,a_1,\dotsc,a_{n-1})^{\intercal}$ be the column vector of the coefficients of $f_P(q)$ and $v^{\prime} = (a_0^{\prime},a_1^{\prime},\dotsc,a_{n-1}^{\prime})^{\intercal}$ the column vector of the first $n$ coefficients of $f_{T_k \oplus P}(q)$. Then
    \begin{enumerate}
        \item $X_n(k)v = v^{\prime}$,
        \item $a_n^{\prime} = n! k!\binom{n+k-1}{k-1}$, and
        %\HC{Note that $a_0 + a_1 + \cdots + a_{n-1} = n!$, do we want that form here?}
        \item $a_i^{\prime} = 0$ for $i=n+1,n+2,\dotsc,n+k-1$.
    \end{enumerate}
\end{thm}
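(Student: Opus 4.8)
The plan is to reduce (1)--(3) to two ingredients: a closed formula for $\order(L)$ on $T_k\oplus P$ in terms of $L|_P$, and a count of $k$-subsets by how quickly the bottom antichain ``sorts itself.'' Set $N:=n+k$. A labeling $L\in\Lambda(T_k\oplus P)$ is encoded by the triple $(S,\sigma,L')$, where $S:=L(T_k)\in\binom{[N]}{k}$ is the set of labels on the minimal antichain, $\sigma$ is the bijection $T_k\to S$ recording their arrangement (there are $k!$ choices), and $L':=\st(L|_P)\in\Lambda(P)$. Writing $S_j:=\partial^j L(T_k)$ and $\tau(S):=\min\{j\ge0:S_j=\{1,\dots,k\}\}$, I claim
\[
\order_{T_k\oplus P}(L)=\max\bigl(\tau(S),\,\order_P(L')\bigr),
\]
so in particular the order is independent of $\sigma$. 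The key structural point is that promotion respects the ordinal--sum decomposition: since every element of $P$ dominates every element of $T_k$ and $T_k$ is an antichain of minimal elements, a promotion chain of $\partial^jL$ is either contained in $P$ (when the label $1$ sits on $P$) or consists of one element of $T_k$ followed by a chain in $P$; tracking step~(2) of \cref{definition:extended-promotion} and standardizing then yields $\st(\partial^{j+1}L|_P)=\partial(\st(\partial^jL|_P))$ in both cases, hence $\st(\partial^jL|_P)=\partial^jL'$ for all $j$, and $S_{j+1}$ is a function of $S_j$ alone. Since a labeling of $T_k\oplus P$ is natural exactly when $T_k$ carries $\{1,\dots,k\}$ and $\st(\cdot|_P)$ is natural, $\partial^jL\in\L(T_k\oplus P)$ iff $j\ge\tau(S)$ and $\partial^jL'\in\L(P)$; taking the least such $j$ (and checking that $S_j=\{1,\dots,k\}$ is a trap, so $\tau$ is well defined) gives the displayed formula.

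Next I would evaluate $\tau(S)$. Encode $S=\{s_1<\dots<s_k\}$ by the weakly increasing ``gap'' sequence $g_i:=s_i-i\in\{0,\dots,n\}$, so that $S=\{1,\dots,k\}\Leftrightarrow g_1=\dots=g_k=0$. A short case analysis of the recursion $S_j\mapsto S_{j+1}$ --- splitting on whether $1\in S_j$ and, if so, letting $p$ be the least positive integer missing from $S_j$ --- shows that in either case the induced map on gap sequences is simply $g_i\mapsto\max(g_i-1,0)$. Iterating, the gap sequence of $S_j$ is $(\max(g_i-j,0))_i$, whence $\tau(S)=g_k=\max(S)-k$. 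Consequently $\tau(S)\le N-k=n$ for every $S$, and for $0\le i\le n$,
\[
\#\bigl\{S\in\textstyle\binom{[N]}{k}:\tau(S)=i\bigr\}=\#\bigl\{S:\max(S)=k+i\bigr\}=\binom{k+i-1}{k-1},\qquad
\#\bigl\{S:\tau(S)\le i\bigr\}=\binom{k+i}{k}.
\]

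With these in hand the generating function follows by counting. Because $\order(L)=\max(\tau(S),\order_P(L'))$ is independent of $\sigma$, we have $a_i'=k!\cdot\#\{(S,L'):\max(\tau(S),\order_P(L'))=i\}$, and since $\tau$ depends only on $S$ and $\order_P(L')$ only on $L'$, inclusion--exclusion gives
\[
\#\{(S,L'):\max=i\}=\binom{k+i}{k}B_i-\binom{k+i-1}{k}B_{i-1},\qquad B_i:=\#\{L'\in\Lambda(P):\order_P(L')\le i\},
\]
where $B_i=b_i=a_0+\dots+a_i$ for $i\le n-1$ and $B_i=n!$ for $i\ge n-1$. For $i\le n-1$ this reads $a_i'=k!\bigl(\binom{k+i}{k}b_i-\binom{k+i-1}{k}b_{i-1}\bigr)$, and $b_i=b_{i-1}+a_i$ together with Pascal's rule turns the bracket into $\binom{k+i}{k}a_i+\binom{k+i-1}{k-1}b_{i-1}$, which is precisely $(X_n(k)v)_{i+1}$ by the definition of $X_n(k)$; this proves (1). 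For $i=n$, the bound $\order_P(L')\le n-1$ forces $\tau(S)=n$, so $a_n'=k!\cdot n!\cdot\#\{S:\tau(S)=n\}=n!\,k!\binom{n+k-1}{k-1}$, which is (2). For $i>n$, $\max(\tau(S),\order_P(L'))\le\max(n,n-1)=n<i$, so $a_i'=0$, which is (3). The main obstacle I anticipate is the bookkeeping behind ``promotion respects the ordinal--sum decomposition'': carefully verifying that standardization intertwines $\partial$ on $T_k\oplus P$ with $\partial$ on $P$ (through the relabeling step, and in the case where the label $1$ rests on $T_k$) and that $\{1,\dots,k\}$ is an absorbing value of $S_j$. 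Once that lemma is isolated, everything else is the elementary combinatorics of $\max(S)$ over $k$-subsets of $[n+k]$.
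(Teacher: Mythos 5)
Your proposal is correct and follows essentially the same route as the paper: the same decomposition of a labeling of $T_k\oplus P$ into the antichain label-set, its arrangement, and the standardized restriction to $P$; the same intertwining lemma $\st(\partial^jL|_P)=\partial^j(\st(L|_P))$; and the same order formula $\order(L)=\max(\max(S)-k,\order_P(L'))$ (your $\tau(S)=\max(S)-k$ via gap sequences is the paper's Claim~2 in different clothing). The only difference is cosmetic: you recover the entries of $X_n(k)$ by inclusion--exclusion on cumulative counts plus Pascal's rule, whereas the paper reads them off directly by casework on $\order(L)$ versus $m-1$.
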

\begin{proof}
Let $x_1,x_2,\ldots,x_k$ be the elements of $T_k$. Since the roles of the $x_i$'s are symmetrical, it follows that permuting the labels of the $x_i$'s on any labeling $L \in \Lambda(T_k \oplus P)$ doesn't change $\order(L)$. Therefore, we will compute the number of labelings that satisfy $L(x_1) < L(x_2) < \cdots < L(x_k)$ and then multiply by $k!$.

Now, we will define a procedure that, given a labeling $L \in \Lambda(P)$ and a $k$-tuple of distinct numbers $I = (i_1, \ldots, i_k)\in [n+k]^k$, %such that $i_1, \ldots, i_k \in [n+k]$, 
produces a labeling $L^I \in \Lambda(T_k \oplus P)$ such that $L^I(x_s) = i_s$ for $1 \le s \le k$. Since we are counting labelings where the labels of the $x_i$'s are increasing, we will assume that $i_1 < i_2 < \cdots < i_k$ for the rest of the proof.

To obtain $L^I$, first define labelings of $L^0, L^1, \ldots, L^k$ of $P$, where $L^0 \coloneqq L$ and for $s = 1, \ldots, k$, recursively define $L^s$ by
\[
L^{s}(x) \coloneqq \begin{cases}
          L^{s-1}(x)+1 & \text{if $L^{s-1}(x) \geq i_s$,} \\
          L^{s-1}(x) & \text{otherwise.}
          \end{cases}
\]

Then define $L^I$ on $T_k\oplus P$ by
\begin{equation}
    L^I(x) \coloneqq \begin{cases}
          i_s & \mbox{if $x = x_s$,} \\
          L^k(x) & \mbox{if $x\in P$.}
          \end{cases}
\end{equation}

In \cref{fig:LI}, we give an example of defining $L^I$ of $T_3 \oplus P$ on a 7-element poset $P$ and with $I=(2,4,7)$. The labeling of $P$ is given in the left figure, and the middle three figures illustrate the process mentioned above. The right figure is the resulting labeling $L^I$ of $T_3 \oplus P$.
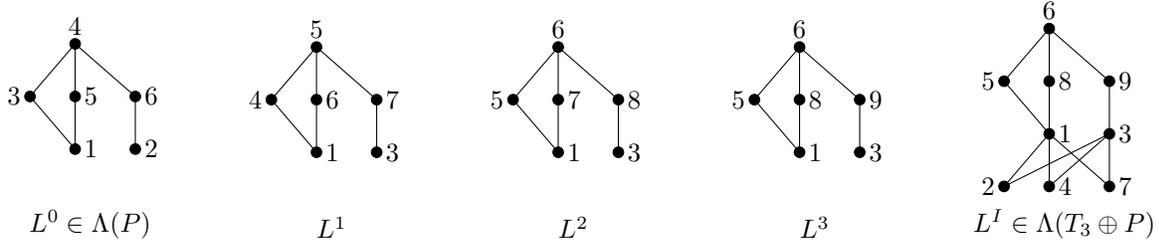
\begin{figure}[htb!]
    \centering
    \begin{subfigure}{0.18\textwidth}
        \begin{tikzpicture}
            \draw (0,1.4) -- (0,0.7) -- (0,0) -- (-0.6,0.7) -- (0,1.4) -- (0.8,0.7) -- (0.8,0);
            \filldraw (0,1.4) circle (2pt) node [anchor=south]{$4$};
            \filldraw (0,0.7) circle (2pt) node [anchor=west]{$5$};
            \filldraw (0,0) circle (2pt) node [anchor=west]{$1$};
            \filldraw (-0.6,0.7) circle (2pt) node [anchor=east]{$3$};
            \filldraw (0.8,0.7) circle (2pt) node [anchor=west]{$6$};
            \filldraw (0.8,0) circle (2pt) node [anchor=west]{$2$};
            \node at (0.2, -1) {$L^0 \in \Lambda(P)$};
        \end{tikzpicture}
    \end{subfigure}
    \begin{subfigure}{0.18\textwidth}
        \begin{tikzpicture}
            \draw (0,1.4) -- (0,0.7) -- (0,0) -- (-0.6,0.7) -- (0,1.4) -- (0.8,0.7) -- (0.8,0);
            \filldraw (0,1.4) circle (2pt) node [anchor=south]{$5$};
            \filldraw (0,0.7) circle (2pt) node [anchor=west]{$6$};
            \filldraw (0,0) circle (2pt) node [anchor=west]{$1$};
            \filldraw (-0.6,0.7) circle (2pt) node [anchor=east]{$4$};
            \filldraw (0.8,0.7) circle (2pt) node [anchor=west]{$7$};
            \filldraw (0.8,0) circle (2pt) node [anchor=west]{$3$};
            \node at (0.2, -1) {$L^1$};
        \end{tikzpicture}
    \end{subfigure}
    \begin{subfigure}{0.18\textwidth}
        \begin{tikzpicture}
            \draw (0,1.4) -- (0,0.7) -- (0,0) -- (-0.6,0.7) -- (0,1.4) -- (0.8,0.7) -- (0.8,0);
            \filldraw (0,1.4) circle (2pt) node [anchor=south]{$6$};
            \filldraw (0,0.7) circle (2pt) node [anchor=west]{$7$};
            \filldraw (0,0) circle (2pt) node [anchor=west]{$1$};
            \filldraw (-0.6,0.7) circle (2pt) node [anchor=east]{$5$};
            \filldraw (0.8,0.7) circle (2pt) node [anchor=west]{$8$};
            \filldraw (0.8,0) circle (2pt) node [anchor=west]{$3$};
            \node at (0.2, -1) {$L^2$};
        \end{tikzpicture}
    \end{subfigure}
    \begin{subfigure}{0.18\textwidth}
        \begin{tikzpicture}
            \draw (0,1.4) -- (0,0.7) -- (0,0) -- (-0.6,0.7) -- (0,1.4) -- (0.8,0.7) -- (0.8,0);
            \filldraw (0,1.4) circle (2pt) node [anchor=south]{$6$};
            \filldraw (0,0.7) circle (2pt) node [anchor=west]{$8$};
            \filldraw (0,0) circle (2pt) node [anchor=west]{$1$};
            \filldraw (-0.6,0.7) circle (2pt) node [anchor=east]{$5$};
            \filldraw (0.8,0.7) circle (2pt) node [anchor=west]{$9$};
            \filldraw (0.8,0) circle (2pt) node [anchor=west]{$3$};
            \node at (0.2, -1) {$L^3$};
        \end{tikzpicture}
    \end{subfigure}
    \begin{subfigure}{0.18\textwidth}
        \begin{tikzpicture}
            \draw (0,1.4) -- (0,0.7) -- (0,0) -- (-0.6,0.7) -- (0,1.4) -- (0.8,0.7) -- (0.8,0);
            \draw (-0.6,-0.7) -- (0,0) -- (0,-0.7) -- (0.8,0) -- (0.8,-0.7);
            \draw (0,0) -- (0.8,-0.7);
            \draw (0.8,0) -- (-0.6,-0.7);
            \filldraw (0,1.4) circle (2pt) node [anchor=south]{$6$};
            \filldraw (0,0.7) circle (2pt) node [anchor=west]{$8$};
            \filldraw (0,0) circle (2pt) node [anchor=west]{$1$};
            \filldraw (-0.6,0.7) circle (2pt) node [anchor=east]{$5$};
            \filldraw (0.8,0.7) circle (2pt) node [anchor=west]{$9$};
            \filldraw (0.8,0) circle (2pt) node [anchor=west]{$3$};
            \filldraw (0.8,-0.7) circle (2pt) node [anchor=west]{$7$};
            \filldraw (0,-0.7) circle (2pt) node [anchor=west]{$4$};
            \filldraw (-0.6,-0.7) circle (2pt) node [anchor=east]{$2$};
            \node at (0.2, -1.2) {$L^I \in \Lambda(T_3 \oplus P)$};
        \end{tikzpicture}
    \end{subfigure}
    \caption{Defining $L^I$ of $T_3 \oplus P$ with $I=(2,4,7)$.}
    \label{fig:LI}
\end{figure}

    One can check that at each step $s = 1, \ldots, k$ the standardization $\st(L^s)$ is precisely $L$. Therefore, the standardization of $L^I|_P$ is $\st(L^I|_P) = \st(L^k) = L$. In other words, $L^I$ is the unique labeling in $\Lambda(T_k \oplus P)$ that assigns the label $i_s$ to $x_s$ for $s = 1, \ldots, k$ and whose standardization when restricted to $P$ is $L$. As a consequence, the set of labelings $\Lambda(T_k \oplus P)$ can be partitioned as
    \begin{equation}
        \Lambda(T_k \oplus P) = \bigsqcup_{L \in \Lambda(P)} \left\{L^{[I]} : I \in \binom{[n]}{k} \right\},
        \label{eq:attach-k-minimum-elements-decomposition}
    \end{equation}
    where $L^{[I]}$ contains the labeling $L^I$ and the labelings obtained from $L^I$ by permuting all the labels of the $x_i$'s.

    Next, we proceed with the following two claims.
    
\noindent \textbf{Claim 1.} Given $L \in \Lambda(P)$ and $I = (i_1, \ldots, i_k)$, the standardization of $L^I|_P$ is preserved under a sequence of promotions:
    \begin{equation}
        \st((L_j^I)|_{P}) =  L_j\text{, for all $j \in \mathbb{Z}_{\geq 0}$.}
        \label{eq:standardization-preserved}
    \end{equation}
\begin{proof}[Proof of Claim 1.]
    We will show Claim $1$ by induction. When $j=0$, the identity holds by the definition of $L^I$. Suppose it holds for some $j$ and consider $L_{j+1}^I$. If $L_{j}^I(x_s)>1$ for all $s=1,2,\dotsc,k$, then these minimal elements $x_s$'s are not in the $(j+1)$-th promotion chain and the claim holds. On the other hand, if there exists an $s$ such that $L_{j}^I(x_s)=1$, then the $(j+1)$-th promotion begins at $x_s$. Since $x_s \leq x$ for all $x \in P$, the next element in the promotion chain is $(L_{j}^I)^{-1}(y)$, where $y=\min\{L_{j}^I(z) : z \in P\}$. This element is exactly $(L_{j}^I)^{-1}(y) = (L_j)^{-1}(1)$. From this point on, the rest of the promotion chain is the same in $L_j^I$ and $L_j$. Therefore, $\st((L_j^I)|_{P}) = L_j$ for all $j \in \mathbb{Z}_{\geq 0}$.
\end{proof}    
    
\noindent \textbf{Claim 2.} Given $L \in \Lambda(P)$ and $I = (i_1, \ldots, i_k)$, the order of $L^I$ is given by 
\begin{equation}\label{eq:order_LI}
    \order(L^I) = \max(i_k - k, \order(L)).
\end{equation}
\begin{proof}[Proof of Claim 2.]
    We observe that for some nonnegative integer $j$, $L_j^I$ is a natural labeling if and only if two conditions are satisfied:
    \begin{enumerate}
        \item the set of labels $\{L_j^I(x_1),\dotsc,L_j^I(x_k)\}$ is $[k]$, and
        \item $(L_j^I)|_P$ is a natural labeling.
    \end{enumerate}
    By \cref{eq:standardization-preserved}, the second condition is satisfied if and only if $j \ge \order(L)$. On the other hand, we show below that the first condition is satisfied if and only if $j \ge i_k-k$. 
    
    To see this, we notice that the first $i_1-1$ promotions only decrement the labels of $x_1, \ldots, x_k$. Let $\mathcal{S}_j \coloneqq \{L_j^I(x_1), \ldots, L_j^I(x_k)\}$ and let $s_j$ be the maximum value (possibly $0$) such that $[s_j] \subseteq \mathcal{S}_j$. Then the minimum label in $L_j^I|_P$ is $s_j+1$ and in the $(j+1)$-th promotion, $(L_j^I)^{-1}(s_j+1)$ is part of the promotion chain, so $\mathcal{S}_{j+1} = [s_j] \cup \{y - 1 : y \in \mathcal{S}_j \setminus [s_j]\}$. Note that $s_{j+1} > s_j$ if and only if $s_j + 1 \in \{y-1 : y \in \mathcal{S}_j \setminus [s_j]\}$. Thus, it follows by an inductive argument that $s_j \ge t$ if and only if $j \ge i_t - t$ which yields the desired result. Combining these two conditions implies that $\order(L^I) = \max(i_k - k, \order(L))$.
\end{proof}

    We are now ready to prove the first statement, in which we show that for $1 \le s \le n$, $k!$ times the number of labelings in $\Lambda(T_k \oplus P)$ with order $m-1$ is equal to the $m$th row of $X_n(k)v$. By \cref{eq:attach-k-minimum-elements-decomposition}, we can sum over all labelings $L \in \Lambda(P)$ and count the number of $I \in \binom{[n]}{k}$ such that $\order(L^I) = m-1$. We proceed by cases analysis of $\order(L)$.
    \begin{itemize}
        \item Suppose $\order(L) < m-1$. Then in order for $\order(L^I) = \max(i_k-k, \order(L)) = m-1$ to hold, it must be that $i_k-k = m-1$. Fixing $i_k = k+m-1$, there are $\binom{k+m-2}{k-1}$ ways to choose $i_1, \ldots, i_{k-1}$ such that $\order(L^I) = m-1$.

        \item Suppose $\order(L)=m-1$. Then in order for $\order(L^I) = \max(i_k-k, \order(L)) = m-1$ to hold, it must be that $i_k - k \le m-1$. Thus, $i_k \le k+m-1$ so there are $\binom{k+m-1}{k}$ ways to choose $i_1, \ldots, i_k$ such that $\order(L^I) = m-1$.

        \item Suppose $\order(L) > m-1$. Then $\order(L^I) = \max(i_k-k,\order(L)) > m-1$ so there are no choices of $I$ that yield $\order(L^I) = m-1$.
    \end{itemize}
    After multiplying by $k!$ to account for the fact that permuting the labels of $x_1, \ldots, x_k$ do not change the order of a labeling of $T_k \oplus P$, the first case yields the $(m,j)$ entry of $X_n(k)$ when $j < m$, the middle case yields the $(m,m)$ entry of $X_n(k)$, and the last case yields the $(m,j)$ entry of $X_n(k)$ when $j > m$. This completes the proof of the first statement.

    To prove the second statement, observe that since $\order(L) \le n-1$ for any $L \in \Lambda(P)$, then $\order(L^I) = \max(i_k-k, \order(L)) = n$ if and only if $i_k -k = n$. Fixing $i_k = k+n$, there are $\binom{n+k-1}{k-1}$ choices for $i_1, \ldots, i_{k-1}$, regardless of $\order(L)$. Multiplying by $k!$ to account for permuting the labels of $x_1, \ldots, x_k$ yields 
    \begin{equation*}
        a_n^{\prime} = k!\binom{n+k-1}{k-1}(a_0 + a_1 + \cdots + a_{n-1}) = n!k!\binom{n+k-1}{k-1}.
    \end{equation*}
    This completes the proof of the second statement.

    Finally to prove the last statement, first observe that $i_k \le k+n$ since there are only $k+n$ elements in $T_k \oplus P$. Thus, $i_k - k \le n$. In addition, any labeling $L \in \Lambda(P)$ has $\order(L) \le n-1$. It follows that $\order(L^I) \le n$ for any choice of $L \in \Lambda(P)$ and $I \in \binom{[n+k]}{k}$. Thus, there do not exist labelings $T_k \oplus P$ with order greater than $n$ and hence $a_i^{\prime} = 0$ for $i=n+1,n+2,\dotsc,n+k-1$. This completes the proof of the last statement.
\end{proof}

We would like to point out that if $k\ge 2$, then $T_k\oplus P$ has no tangled labelings.

\begin{example}\label{ex:attachgf}
    Let $P$ be as in \cref{ex:fg}. The sorting generating function of $P$ is given by $f_P(q) = 2+4q$. Let $v$ be the column vector $(2,4,0)^{\intercal}$. We show below how to obtain the sorting generating function of posets shown in \cref{fig:attachposet} from \cref{thm:attach_k_minimal_gf}. 

    For $T_1 \oplus P$,
    \begin{equation*}
        X_3(1)v = \begin{pmatrix}
            1 & 0 & 0 \\
            1 & 2 & 0 \\
            1 & 1 & 3
        \end{pmatrix} \begin{pmatrix}
            2 \\
            4 \\
            0 
        \end{pmatrix} = \begin{pmatrix}
            2 \\
            10 \\
            6 
        \end{pmatrix} \text{ and } a_3^{\prime} = 1(2+4+0)=6.
    \end{equation*}
    Then $f_{T_1 \oplus P}(q) = 2+10q+6q^2+6q^3$. For $T_2 \oplus P$,
    \begin{equation*}
        X_3(2)v = \begin{pmatrix}
            2 & 0 & 0 \\
            4 & 6 & 0 \\
            6 & 6 & 12
        \end{pmatrix} \begin{pmatrix}
            2 \\
            4 \\
            0 
        \end{pmatrix} = \begin{pmatrix}
            4 \\
            32 \\
            36 
        \end{pmatrix} \text{ and } a_3^{\prime} = 8(2+4+0)=48.
    \end{equation*}
    Then $f_{T_2 \oplus P}(q) = 4+32q+36q^2+48q^3$. Finally, for $T_3 \oplus P$,
    \begin{equation*}
        X_3(3)v = \begin{pmatrix}
            6 & 0 & 0 \\
            18 & 24 & 0 \\
            36 & 36 & 60
        \end{pmatrix} \begin{pmatrix}
            2 \\
            4 \\
            0 
        \end{pmatrix} = \begin{pmatrix}
            12 \\
            132 \\
            216 
        \end{pmatrix} \text{ and } a_3^{\prime} = 60(2+4+0)=360.
    \end{equation*}
    Then $f_{T_3 \oplus P}(q) = 12+132q+216q^2+360q^3$.
\begin{figure}[htb!]
    \centering
    \begin{subfigure}{0.18\textwidth}
        \begin{tikzpicture}
        \draw (-0.55,0) -- (0,1) --  (0.55,0);
        \filldraw[black] (-0.55,0) circle (2pt) node [anchor=east] {};
        \filldraw[black] (0,1) circle (2pt) node [anchor=south] {};
        \filldraw[black] (0.55,0) circle (2pt) node [anchor=west] {};
        \node at (0,-0.5) {$P$};
        \end{tikzpicture}
    \end{subfigure} 
    \begin{subfigure}{0.18\textwidth}
        \begin{tikzpicture}
        \draw (-0.55,0) -- (0,1) --  (0.55,0) -- (0,-1) -- (-0.55,0);
        \filldraw[black] (-0.55,0) circle (2pt) node [anchor=east] {};
        \filldraw[black] (0,1) circle (2pt) node [anchor=south] {};
        \filldraw[black] (0.55,0) circle (2pt) node [anchor=west] {};
        \filldraw[black] (0,-1) circle (2pt) node [anchor=west] {};
        \node at (0,-1.5) {$T_1 \oplus P$};
        \end{tikzpicture}
    \end{subfigure} 
    \begin{subfigure}{0.18\textwidth}
        \begin{tikzpicture}
        \draw (0.55,-1) -- (-0.55,0) -- (0,1) -- (0.55,0) -- (-0.55,-1);
        \draw (-0.55,0) -- (-0.55,-1);
        \draw (0.55,0) -- (0.55,-1); 
        \filldraw[black] (-0.55,0) circle (2pt) node [anchor=east] {};
        \filldraw[black] (0,1) circle (2pt) node [anchor=south] {};
        \filldraw[black] (0.55,0) circle (2pt) node [anchor=west] {};
        \filldraw[black] (-0.55,-1) circle (2pt) node [anchor=south] {};
        \filldraw[black] (0.55,-1) circle (2pt) node [anchor=west] {};
        \node at (0,-1.5) {$T_2 \oplus P$};
        \end{tikzpicture}
    \end{subfigure} 
    \begin{subfigure}{0.18\textwidth}
        \begin{tikzpicture}
        \draw (1.1,-1) -- (-0.55,0) -- (0,1) -- (0.55,0) -- (-1.1,-1);
        \draw (-0.55,0) -- (-1.1,-1);
        \draw (0.55,0) -- (1.1,-1); 
        \draw (-0.55,0) -- (0,-1); 
        \draw (0.55,0) -- (0,-1); 
        \filldraw[black] (-0.55,0) circle (2pt) node [anchor=east] {};
        \filldraw[black] (0,1) circle (2pt) node [anchor=south] {};
        \filldraw[black] (0.55,0) circle (2pt) node [anchor=west] {};
        \filldraw[black] (-1.1,-1) circle (2pt) node [anchor=south] {};
        \filldraw[black] (0,-1) circle (2pt) node [anchor=west] {};
        \filldraw[black] (1.1,-1) circle (2pt) node [anchor=west] {};
        \node at (0,-1.5) {$T_3 \oplus P$};
        \end{tikzpicture}
    \end{subfigure} 
    \caption{The posets obtained from $P$ by attaching $1,2$ and $3$ minimal elements.}
    \label{fig:attachposet}
\end{figure}
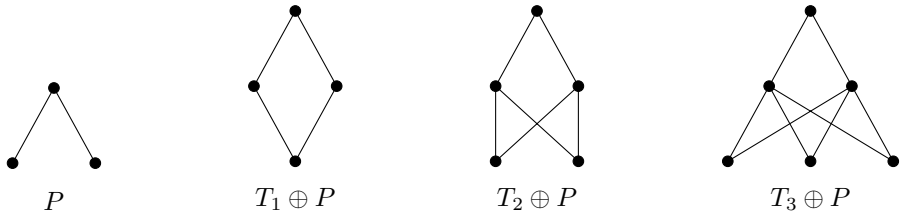
\end{example}

An analogous result for the cumulative generating function $g_{T_k \oplus P}(q)$ is stated below. 
\begin{thm}\label{thm:attach_k_element2}
    Let $P$ be an $n$-element poset and  $g_P(q) = \sum_{i=0}^{n-1}b_iq^i$ the cumulative generating function of $P$. Assume $g_{T_k \oplus P}(q) = \sum_{i=0}^{n+k-1} b_i^{\prime} q^i$. Let $w=(b_0,b_1,\dotsc,b_{n-1})^{\intercal}$ be the column vector of the coefficients of $g_P(q)$ and $w^{\prime} = (b_0^{\prime},b_1^{\prime},\dotsc,b_{n-1}^{\prime})^{\intercal}$ be the column vector of the first $n$ coefficients of $g_{T_k \oplus P}(q)$. Then
     \begin{enumerate}
        \item $Y_n(k)w = w^{\prime}$, where $Y_n(k)$ is the $n \times n$ diagonal matrix, the $i$th diagonal entry given by $\frac{(k+i-1)!}{(i-1)!}$.
        \item $b_i^{\prime} = (n+k)!$ for $i=n,n+1,\dotsc,n+k-1$.    
    \end{enumerate}   
\end{thm}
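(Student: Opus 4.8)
The plan is to obtain \cref{thm:attach_k_element2} directly from the combinatorial set-up in the proof of \cref{thm:attach_k_minimal_gf}, rather than by summing rows of the matrix identity $X_n(k)v=v'$. First I would recall from that proof the decomposition of $\Lambda(T_k\oplus P)$ into classes, one class $L^{[I]}$ for each pair $(L,I)$ with $L\in\Lambda(P)$ and $I$ a $k$-element subset of $[n+k]$: the class $L^{[I]}$ consists of the $k!$ labelings obtained from $L^I$ by permuting the labels of the elements of $T_k$, and by \cref{eq:order_LI} all $k!$ of them share the common order $\order(L^I)=\max(\max(I)-k,\ \order(L))$. Since $b'_m$ is by definition the number of $L'\in\Lambda(T_k\oplus P)$ with $\order(L')\le m$, it equals $k!$ times the number of pairs $(L,I)$ for which $\max(\max(I)-k,\ \order(L))\le m$.

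Then I would carry out the count. For a fixed $L\in\Lambda(P)$ the inequality $\max(\max(I)-k,\ \order(L))\le m$ is equivalent to $\order(L)\le m$ together with $\max(I)\le k+m$. If $\order(L)>m$ no set $I$ works; if $\order(L)\le m$, then since $0\le m\le n-1$ we have $k+m\le n+k$, so the admissible $I$ are exactly the $k$-subsets of $\{1,\dots,k+m\}$, of which there are $\binom{k+m}{k}$. Summing over $L$ yields
\[
b'_m = k!\binom{k+m}{k}\,\#\{L\in\Lambda(P):\order(L)\le m\} = k!\binom{k+m}{k}\,b_m = \frac{(k+m)!}{m!}\,b_m ,
\]
which is exactly the corresponding coordinate of $Y_n(k)w$; this proves statement (1). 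For statement (2) I would note that $\order(L^I)=\max(\max(I)-k,\ \order(L))\le\max((n+k)-k,\ n-1)=n$ for every $L$ and $I$, using $\order(L)\le n-1$ from \cref{theorem:defant-kravitz-main}; hence every labeling of $T_k\oplus P$ has order at most $n$, so $b'_i=|\Lambda(T_k\oplus P)|=(n+k)!$ for all $i$ with $n\le i\le n+k-1$.

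There is no genuinely hard step here: the entire content is importing, with correct indices, the decomposition and the order formula $\order(L^I)=\max(\max(I)-k,\ \order(L))$ from the proof of \cref{thm:attach_k_minimal_gf}. The two points to watch are that $I$ must range over $k$-subsets of $[n+k]$ (so that precisely $\binom{k+m}{k}$ of them have maximum at most $k+m$) and that each class $L^{[I]}$ contributes $k!$ labelings of equal order. As an alternative one could instead sum the rows of $X_n(k)v=v'$: using $a'_i=k!\binom{k+i}{k}a_i+k!\binom{k+i-1}{k-1}b_{i-1}$ together with the hockey-stick identity $\sum_{q=r}^{m-1}\binom{k+q}{k-1}=\binom{k+m}{k}-\binom{k+r}{k}$ one again reaches $b'_m=k!\binom{k+m}{k}b_m$, but this route is more sensitive to index shifts, so I would present the combinatorial argument.
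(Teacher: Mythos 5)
Your proof is correct, but it takes a different route from the paper. The paper deduces the theorem purely linear-algebraically from the \emph{statement} of \cref{thm:attach_k_minimal_gf}: it introduces the lower-triangular all-ones matrix $R_n$ (so that $R_nv=w$), verifies the intertwining identity $Y_n(k)R_n=R_nX_n(k)$, and concludes $Y_n(k)w=R_nX_n(k)v=R_nv'=w'$; part (2) then follows from $a_i'=0$ for $i>n$ together with the value of $a_n'$. You instead recount directly at the level of cumulative sums, reusing the decomposition of $\Lambda(T_k\oplus P)$ into classes $L^{[I]}$ and the order formula $\order(L^I)=\max(\max(I)-k,\order(L))$ from the \emph{proof} of \cref{thm:attach_k_minimal_gf}, arriving at the clean identity $b_m'=k!\binom{k+m}{k}b_m$. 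The trade-off: the paper's argument is shorter once \cref{thm:attach_k_minimal_gf} is in hand but hides the verification of $Y_n(k)R_n=R_nX_n(k)$ in a ``one can check''; your argument must import internal machinery from the earlier proof, but it replaces the three-case analysis there with a single inequality $\max(I)\le k+m$ and makes the diagonal form of $Y_n(k)$ combinatorially transparent. Your indices check out (in particular, $I$ correctly ranges over $k$-subsets of $[n+k]$, which silently corrects a typo in the paper's displayed decomposition, where $\binom{[n]}{k}$ should read $\binom{[n+k]}{k}$), and your part (2) is the same observation the paper makes, phrased via the order bound $\order(L^I)\le n$ rather than via vanishing of the $a_i'$.
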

\begin{proof}
    Let $R_n$ be the lower triangular matrix of size $n$ whose lower triangular entries (including the diagonal entries) are $1$. If $v=(a_0,a_1,\dotsc,a_{n-1})^{\intercal}$ is the column vector of the coefficients of $f_P(q)$, then it is easy to see that $R_nv=w$. One can also check that $Y_n(k)R_n = R_nX_n(k)$. 

    By \cref{thm:attach_k_minimal_gf}, the first part of the statement follows from the identities below. 
    \begin{equation*}
        Y_n(k)w = Y_n(k)R_nv = R_nX_n(k)v = R_nv^{\prime} = w^{\prime}.
    \end{equation*}
    Since $a_i^{\prime} = 0$ for $i=n+1,n+2,\dotsc,n+k-1$, this implies that $b_i^{\prime} = (n+k)!$ for $i=n,n+1,\dotsc,n+k-1$.
\end{proof}

We close this section with a special family of posets which are obtained from a given $n$-element poset $P$ by attaching the chain with $\ell$ elements below $P$, that is, $\left( \bigoplus_{i=1}^{\ell} T_1 \right) \oplus P$. For convenience, we denote it by $P^{(\ell)}$. Note that $P^{(\ell)}$ has $n+\ell$ elements.

We assume that the sorting and cumulative generating functions of $P^{(\ell)}$ are written as $f_{P^{(\ell)}}(q) = \sum_{i=0}^{n+\ell-1} a_{i}^{(\ell)} q^i$ and $g_{P^{(\ell)}}(q) = \sum_{i=0}^{n+\ell-1} b_{i}^{(\ell)}q^i$, respectively. Two propositions are stated below.
\begin{prop}\label{prop:Pk_gf_b}
    Let $P$ be an $n$-element poset and $P^{(\ell)}$ the poset obtained from $P$ by attaching the chain with $\ell$ elements below $P$. The last $\ell+1$ coefficients of the cumulative generating function $g_{P^{(\ell)}}(q)$ are given by
    \begin{equation}\label{eq.coeff_of_b}
        b_{n+\ell-(r+1)}^{(\ell)} = (n+\ell-r)^{r}(n+\ell-r)!, 
    \end{equation}
    for $0 \leq r \leq \ell$.
    
    Moreover, $P^{(\ell)}$ satisfies \cref{conj:main-conjecture} if and only if 
    \begin{equation}\label{eq.lower_bound_of_b}
        b_{n-2}^{(\ell)} \geq (n-1)^{\ell+1}(n-1)!. 
    \end{equation}  
\end{prop}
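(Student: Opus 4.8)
\emph{Part 1 (the coefficient formula).} The plan is to prove the formula by induction on $\ell$, realizing $P^{(\ell)} = T_1 \oplus P^{(\ell-1)}$ as a single minimal element attached below $P^{(\ell-1)}$ and invoking the $k=1$ case of \cref{thm:attach_k_element2}. Write $N = n+\ell$. The two ingredients I need at each stage are: (i) the top cumulative coefficient of any $N$-element poset equals $N!$ (so $b^{(\ell)}_{N-1} = N!$), as recorded in \cref{defn:gfs}; and (ii) for $k=1$ the matrix $Y_{N-1}(1)$ is the diagonal matrix with entries $1,2,\dots,N-1$, so the relation $Y_{N-1}(1)w = w'$ reads $b^{(\ell)}_{i} = (i+1)\,b^{(\ell-1)}_{i}$ for $0 \le i \le N-2$. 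I would then induct on the claim $b^{(\ell)}_{N-1-r} = (N-r)^r (N-r)!$ for $0\le r\le \ell$. The case $r=0$ is exactly (i). For $1 \le r \le \ell$ the index $i = N-1-r$ lies in $[0, N-2]$, so (ii) gives $b^{(\ell)}_{N-1-r} = (N-r)\,b^{(\ell-1)}_{N-1-r}$; rewriting $N-1-r = (N-1)-1-(r-1)$ and applying the inductive hypothesis for $P^{(\ell-1)}$ (an $(N-1)$-element poset) with $s = r-1$ gives $b^{(\ell-1)}_{N-1-r} = (N-r)^{r-1}(N-r)!$, and multiplying yields $(N-r)^r(N-r)!$. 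The only care needed is the reindexing $r \mapsto r-1$ when passing from $P^{(\ell)}$ to $P^{(\ell-1)}$ and checking that the index stays in the valid band of (ii).

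\emph{Moreover part.} Here the key computation is to track the single coefficient $b_{n-2}$ through all $\ell$ attachments. Since $n-2 \le (n+\ell')-2$ for every intermediate poset $P^{(\ell')}$, relation (ii) applies at this index at every step and multiplies it by $(n-2)+1 = n-1$. Iterating $\ell$ times produces the exact identity
\begin{equation*}
 b_{n-2}^{(\ell)} = (n-1)^{\ell}\,b_{n-2},
\end{equation*}
where $b_{n-2}$ is the corresponding coefficient of $g_P$. Next I would rewrite \cref{conj:main-conjecture} as a coefficient inequality: for an $m$-element poset $R$ the tangled labelings are precisely the order-$(m-1)$ labelings, so their number is $m! - b_{m-2}(R)$, and the bound $m! - b_{m-2}(R) \le (m-1)!$ is equivalent to $b_{m-2}(R) \ge (m-1)(m-1)!$. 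Taking $R = P$ and $m = n$, \cref{conj:main-conjecture} for $P$ is equivalent to $b_{n-2} \ge (n-1)(n-1)!$.

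Combining these and dividing by the positive quantity $(n-1)^{\ell}$ (valid for $n \ge 2$) gives
\begin{equation*}
 b_{n-2}^{(\ell)} \ge (n-1)^{\ell+1}(n-1)! \iff b_{n-2} \ge (n-1)(n-1)!,
\end{equation*}
and the right-hand inequality is exactly \cref{conj:main-conjecture} for $P$. This is the asserted biconditional. I would also record, via Part 1 with $r=1$ (which gives $b^{(\ell)}_{N-2} = (N-1)(N-1)!$ and hence $|\T(P^{(\ell)})| = N! - (N-1)(N-1)! = (N-1)!$), that the tangled-count bound for $P^{(\ell)}$ is met with equality automatically. This automatic equality is precisely why the nonvacuous content of the clause must be read through the base poset: the displayed inequality is the faithful translation of \cref{conj:main-conjecture} into a single cumulative coefficient of $P^{(\ell)}$, with the geometric factor $(n-1)^{\ell}$ absorbing the effect of the $\ell$ attachments.

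\emph{Main obstacle.} The induction in Part 1 is routine once ingredients (i) and (ii) are isolated; the only genuine bookkeeping is the shift $r \mapsto r-1$ across the two posets and keeping the index inside the band where (ii) is valid. The subtle point in the \emph{moreover} part is conceptual rather than computational: one must observe that the conjecture is governed by the \emph{second-highest} cumulative coefficient, while attaching minimal elements acts on the fixed low coefficient $b_{n-2}$ by the clean geometric factor $(n-1)^{\ell}$. Establishing that $i = n-2$ never leaves the valid range of relation (ii) at any of the $\ell$ steps is exactly what makes the scaling identity $b_{n-2}^{(\ell)} = (n-1)^{\ell} b_{n-2}$ hold on the nose, and it is this identity that converts the conjecture for the base poset into the displayed single-coefficient criterion.
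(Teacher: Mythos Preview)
Your proof is correct and follows essentially the same route as the paper: both iterate \cref{thm:attach_k_element2} with $k=1$, using that the diagonal entries of $Y_{\bullet}(1)$ multiply the $i$th cumulative coefficient by $i+1$, and both reduce the \emph{moreover} clause to the identity $b_{n-2}^{(\ell)} = (n-1)^{\ell}\,b_{n-2}$ together with the rewriting of \cref{conj:main-conjecture} as $b_{n-2} \ge (n-1)(n-1)!$. Your additional observation that $P^{(\ell)}$ itself automatically meets the tangled bound with equality (so that the biconditional is really a statement about the base poset $P$) is a nice clarification that the paper leaves implicit.
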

\begin{proof}    
   Applying \cref{thm:attach_k_element2} with $k=1$ repeatedly, we obtain
   \begin{equation*}
       b_{n+\ell-(r+1)}^{(\ell)} = (n+\ell-r)b_{n+\ell-(r+1)}^{(\ell-1)}  = (n+\ell-r)^2 b_{n+\ell-(r+1)}^{(\ell-2)} = \cdots = (n+\ell-r)^t b_{n+\ell-(r+1)}^{(\ell-t)},
   \end{equation*}
   for $0 \leq r \leq \ell$ and for some non-negative integer $t$. When $(n+\ell-(r+1)) - (\ell-t) = n-1$, that is, when $t=r$, $b_{n+\ell-(r+1)}^{(\ell-t)}$ is the leading coefficient of the $g_{P^{(\ell-t)}}(q)$. So, $b_{n+\ell-(r+1)}^{(\ell-t)} = (n+\ell-r)!$. Therefore, $b_{n+\ell-(r+1)}^{(\ell)} = (n+\ell-r)^{r}(n+\ell-r)!$.

   \cref{conj:main-conjecture} implies that $a_{n-1} \leq (n-1)!$. Since $b_{n-1} = b_{n-2} + a_{n-1}$,
   \begin{equation*}
       b_{n-2} = b_{n-1} - a_{n-1} \geq n! - (n-1)! = (n-1)(n-1)!.
   \end{equation*}
   We again apply \cref{thm:attach_k_element2} with $k=1$ repeatedly, then
   \begin{equation*}
       b_{n-2}^{(\ell)} = (n-1) b_{n-2}^{(\ell-1)} = \cdots = (n-1)^{\ell} b_{n-2} \geq (n-1)^{\ell+1}(n-1)!.
   \end{equation*}
    The converse statement is argued in a similar way and will be omitted here.
\end{proof}
We then state below the counterpart result of Proposition \ref{prop:Pk_gf_b}.
\begin{prop}\label{prop:Pk_gf_a}
    Let $P$ be an $n$-element poset. For $0 \leq r \leq \ell-1$, the number of $r$-tangled labelings of $P^{(\ell)}$ is given by
    \begin{equation}\label{eq.r-tangled_labelings}
        a_{n+\ell-(r+1)}^{(\ell)} = \left((n+\ell-r)^{r+1}-(n+\ell-(r+1))^{r+1} \right) (n+\ell-(r+1))!.
    \end{equation}
    Moreover, $P^{(\ell)}$ satisfies \cref{conj:main-conjecture} if and only if
    \begin{equation}\label{eq.upper_bound_of_a}
        a_{n-1}^{(\ell)} \leq \left(n^{\ell+1}-(n-1)^{\ell+1} \right)(n-1)!. 
    \end{equation}
\end{prop}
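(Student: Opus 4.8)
The plan is to derive both parts directly from \cref{prop:Pk_gf_b}, using only the elementary relation between the coefficients of the sorting and cumulative generating functions: by \cref{defn:gfs} we have $a_i^{(\ell)} = b_i^{(\ell)} - b_{i-1}^{(\ell)}$ for $i \ge 1$ (with the convention $b_{-1}^{(\ell)} = 0$). No new promotion-theoretic input is needed; the argument is a bookkeeping computation on the closed forms already established.

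For \eqref{eq.r-tangled_labelings}, I would set $i = n+\ell-(r+1)$, so that an $r$-tangled labeling of $P^{(\ell)}$ is precisely a labeling of order $i$ and $a_i^{(\ell)} = b_i^{(\ell)} - b_{i-1}^{(\ell)}$. Evaluate the first term with \eqref{eq.coeff_of_b} at index $r$, obtaining $b_{n+\ell-(r+1)}^{(\ell)} = (n+\ell-r)^r(n+\ell-r)!$, and absorb one factor of $n+\ell-r$ into the factorial to rewrite it as $(n+\ell-r)^{r+1}(n+\ell-(r+1))!$. The second term is $b_{i-1}^{(\ell)} = b_{n+\ell-((r+1)+1)}^{(\ell)}$, which \eqref{eq.coeff_of_b} at index $r+1$ (legal since $r \le \ell-1$) evaluates to $(n+\ell-(r+1))^{r+1}(n+\ell-(r+1))!$. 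Subtracting yields exactly \eqref{eq.r-tangled_labelings}.

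For the ``moreover'' statement, I would first specialize \eqref{eq.coeff_of_b} to $r = \ell$, which gives $b_{n-1}^{(\ell)} = n^{\ell}\, n! = n^{\ell+1}(n-1)!$. Since $a_{n-1}^{(\ell)} = b_{n-1}^{(\ell)} - b_{n-2}^{(\ell)}$, the inequality $a_{n-1}^{(\ell)} \le (n^{\ell+1} - (n-1)^{\ell+1})(n-1)!$ is equivalent to $b_{n-2}^{(\ell)} \ge (n-1)^{\ell+1}(n-1)!$, which is exactly condition \eqref{eq.lower_bound_of_b}; the second half of \cref{prop:Pk_gf_b} then identifies this with $P^{(\ell)}$ satisfying \cref{conj:main-conjecture}, giving the claimed equivalence.

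I do not expect a genuine obstacle here, since the proposition is essentially a corollary of \cref{prop:Pk_gf_b}. The only points requiring care are the repeated re-indexing between $n+\ell-(r+1)$ and $n+\ell-(r+2)$, the algebraic identity $(n+\ell-r)^r(n+\ell-r)! = (n+\ell-r)^{r+1}(n+\ell-(r+1))!$ used to match the two forms of $b^{(\ell)}$, and the check that $r+1 \le \ell$ stays within the range allowed by \eqref{eq.coeff_of_b}. A self-contained alternative would be to iterate \cref{thm:attach_k_minimal_gf} with $k=1$ on the sorting generating function directly---using that $X_m(1)$ is lower-triangular with diagonal entries $i$ and all strictly-lower entries equal to $1$---but this is messier than routing through the diagonal matrix $Y_m(1)$ of \cref{thm:attach_k_element2}, which is precisely what \cref{prop:Pk_gf_b} already packages.
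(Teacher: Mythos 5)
Your proposal is correct and follows exactly the paper's route: both parts are obtained by differencing the cumulative coefficients, i.e.\ $a_{n+\ell-(r+1)}^{(\ell)} = b_{n+\ell-(r+1)}^{(\ell)} - b_{n+\ell-(r+2)}^{(\ell)}$ together with \cref{eq.coeff_of_b}, and the ``moreover'' equivalence is the same translation of \cref{eq.upper_bound_of_a} into \cref{eq.lower_bound_of_b} via $b_{n-1}^{(\ell)} = n^{\ell}\,n!$. The only difference is that you spell out the index shift and the identity $(n+\ell-r)^r(n+\ell-r)! = (n+\ell-r)^{r+1}(n+\ell-(r+1))!$, which the paper leaves implicit.
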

\begin{proof}
    Notice that $b_{n+\ell-(r+1)}^{(\ell)} - b_{n+\ell-(r+2)}^{(\ell)} = a_{n+\ell-(r+1)}^{(\ell)}$ for $0 \leq r \leq \ell-1$. Then \cref{eq.r-tangled_labelings} follows immediately from \cref{eq.coeff_of_b}. 
    
    By \cref{eq.coeff_of_b} with $r=\ell$, $b_{n-1}^{(\ell)} = n^{\ell}n!$. Then \cref{eq.upper_bound_of_a} is obtained from $a_{n-1}^{(\ell)} = b_{n-1}^{(\ell)} - b_{n-2}^{(\ell)}$ and \cref{eq.lower_bound_of_b}. The converse statement can be argued similarly and is omitted here.
\end{proof}

We next show that our poset $P^{(\ell)}$ satisfies \cite[Conjecture 23]{hodges-2022}. This conjecture states that for an $n$-element poset $P$, the number of labelings $L \in \Lambda(P)$ such that $L_{n-3} \notin \mathcal{L}(P)$ has an upper bound $3(n-1)!$.
\begin{cor}\label{cor:(quasi)-tangled_labelings}
    Let $P$ be an $n$-element poset and $\ell \geq 1$. The number of labelings $L \in \Lambda(P^{(\ell)})$ such that $L_{n+\ell-3} \notin \mathcal{L}(P^{(\ell)})$, that is, the total number of tangled and quasi-tangled labelings of $P^{(\ell)}$, equals
    \begin{equation*}
            3(n+\ell-1)! - (n+\ell-2)! \le 3(n+\ell-1)!,  
    \end{equation*}
\end{cor}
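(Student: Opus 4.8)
The plan is to translate the condition into a statement about the sorting generating function of $P^{(\ell)}$. Since $P^{(\ell)}$ has $n+\ell$ elements, a labeling $L$ satisfies $L_{n+\ell-3}\notin\mathcal{L}(P^{(\ell)})$ exactly when $\order(L)>n+\ell-3$, i.e.\ when $\order(L)\in\{n+\ell-2,\,n+\ell-1\}$; in the terminology of \cref{defn:k-tangled} these are precisely the $1$-tangled (quasi-tangled) and $0$-tangled (tangled) labelings of $P^{(\ell)}$. Hence the quantity in question is $a_{n+\ell-1}^{(\ell)}+a_{n+\ell-2}^{(\ell)}$, the sum of the top two coefficients of $f_{P^{(\ell)}}(q)$, and everything reduces to evaluating these two numbers.

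For the evaluation I would invoke \cref{prop:Pk_gf_a}, which expresses the number of $r$-tangled labelings of $P^{(\ell)}$ in closed form. Substituting $r=0$ gives $a_{n+\ell-1}^{(\ell)}=\bigl((n+\ell)-(n+\ell-1)\bigr)(n+\ell-1)!=(n+\ell-1)!$, and substituting $r=1$ gives $a_{n+\ell-2}^{(\ell)}=\bigl((n+\ell-1)^{2}-(n+\ell-2)^{2}\bigr)(n+\ell-2)!=\bigl(2(n+\ell)-3\bigr)(n+\ell-2)!$. Adding these and pulling out the factor $(n+\ell-2)!$ leaves $\bigl((n+\ell-1)+2(n+\ell)-3\bigr)(n+\ell-2)!=\bigl(3(n+\ell)-4\bigr)(n+\ell-2)!$, and since $3(n+\ell-1)!-(n+\ell-2)!=\bigl(3(n+\ell-1)-1\bigr)(n+\ell-2)!=\bigl(3(n+\ell)-4\bigr)(n+\ell-2)!$ this is exactly the claimed value; the inequality $3(n+\ell-1)!-(n+\ell-2)!\le 3(n+\ell-1)!$ is trivial. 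This last bound is the bound $3((n+\ell)-1)!$ of \cite[Conjecture~23]{hodges-2022} for the $(n+\ell)$-element poset $P^{(\ell)}$, so the corollary confirms that this family satisfies that conjecture.

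The step that I expect to require the most care is the boundary case $\ell=1$, because \cref{prop:Pk_gf_a} supplies the $r$-tangled count of $P^{(\ell)}$ only for $0\le r\le\ell-1$: when $\ell=1$ it gives the tangled count $a_{n}^{(1)}=n!$ (the $r=0$ instance) but says nothing about the quasi-tangled count $a_{n-1}^{(1)}$ that the sum also needs. To handle $\ell=1$ I would route through the cumulative generating function, writing $a_{n-1}^{(1)}=b_{n-1}^{(1)}-b_{n-2}^{(1)}$, reading $b_{n-1}^{(1)}=n\cdot n!$ off \cref{prop:Pk_gf_b} (its $\ell=r=1$ instance), and obtaining $b_{n-2}^{(1)}=(n-1)b_{n-2}$ from \cref{thm:attach_k_element2} with $k=1$, where $b_{n-2}=n!-|\T(P)|$ is the corresponding coefficient of $g_P(q)$; invoking the bound $|\T(P)|\le(n-1)!$ for $P$ (for instance via \cite[Corollary~3.7]{defant-kravitz-2023} when $P^{(1)}=T_1\oplus P$ has a unique basin, and in general the assertion of \cref{conj:main-conjecture}) then closes the case. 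For $\ell\ge2$ this wrinkle disappears, since $P^{(\ell)}=T_1\oplus P^{(\ell-1)}$ with $P^{(\ell-1)}$ already having a unique minimal element --- exactly the feature making the $r=1$ instance of \cref{prop:Pk_gf_a} available and the answer independent of $P$.
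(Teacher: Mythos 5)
Your main computation is exactly the paper's proof: apply \cref{prop:Pk_gf_a} with $r=0$ and $r=1$, sum the two counts, and simplify to $(3(n+\ell)-4)(n+\ell-2)! = 3(n+\ell-1)!-(n+\ell-2)!$. For $\ell\ge 2$ this is complete and correct, and the translation of the condition $L_{n+\ell-3}\notin\mathcal{L}(P^{(\ell)})$ into ``order in $\{n+\ell-2,\,n+\ell-1\}$'' is right.

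Where you go beyond the paper is in flagging that \cref{prop:Pk_gf_a} only covers $0\le r\le\ell-1$, so its $r=1$ instance is unavailable when $\ell=1$; the paper's proof silently ignores this. You are right to worry, but your patch does not close the case. First, it leans on ``the assertion of \cref{conj:main-conjecture}'', an open conjecture, which cannot be used as a lemma. Second, even granting $|\T(P)|\le(n-1)!$, your own chain of identities gives $a_{n-1}^{(1)} = n\cdot n! - (n-1)\bigl(n!-|\T(P)|\bigr) = n! + (n-1)|\T(P)|$, a quantity that genuinely depends on $P$; the bound yields only $a_{n-1}^{(1)}\le 2n!-(n-1)!$, an inequality where the corollary asserts an equality. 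In fact no patch can work: taking $P=T_2$ (so $|\T(P)|=0$ and $n=2$), the poset $P^{(1)}$ is the three-element $\vee$-shape, which has $4$ labelings of positive order rather than the $3\cdot 2!-1!=5$ the formula predicts. The correct conclusion from your analysis is that the stated equality holds only for $\ell\ge 2$, while for $\ell=1$ the count is $2n!+(n-1)|\T(P)|$ and is not independent of $P$ --- a defect in the corollary as stated rather than in your instinct about where the difficulty lies, but one your write-up should make explicit instead of claiming the case is closed.
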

\begin{proof}
    By \cref{prop:Pk_gf_a} with $r=0$, the number of tangled labelings of $P^{(\ell)}$ is          \begin{equation*}
        a_{n+\ell-1}^{(\ell)} = (n+\ell-1)!.
    \end{equation*}
    Take $r=1$ in \cref{prop:Pk_gf_a}, we obtain the number of quasi-tangled labelings of $P^{(\ell)}$, which is given by
     \begin{align*}
       a_{n+\ell-2}^{(\ell)} & =
        \left( (n+\ell-1)^{2} - (n+\ell-2)^{2}\right) (n+\ell-2)! \\
        & = \left( 2(n+\ell-1) -1 \right)(n+\ell-2)! \\
        & = 2(n+\ell-1)! - (n+\ell-2)!.
    \end{align*}
    Summing these two numbers gives the desired result. 
\end{proof}
\begin{remark}\label{rmk:(quasi)_tangled_p(l)}
    Let $P$ be an $n$-element poset. We are able to give a simple and unified proof of some results given by Defant and Kravitz in \cite{defant-kravitz-2023} and by Hodges in \cite{hodges-2022}.
    \begin{itemize}
        \item Take $\ell=1$, the poset $P^{(1)}$ has one minimal element. By \cref{prop:Pk_gf_a} with $r=0$, the number of tangled labelings of $P^{(1)}$ is given by
        \begin{equation*}
            a_{n}^{(1)} = \left(n+1 - n\right) n! = n!.
        \end{equation*}
        This gives an alternative proof of \cite[Corollary 3.7]{defant-kravitz-2023} (for a connected poset).
        
        \item Take $\ell=2$, the poset $P^{(2)}$ has one minimal element and this minimal element has exactly one parent. By \cref{prop:Pk_gf_a} with $r=1$, the number of quasi-tangled labelings of $P^{(2)}$ is given by
        \begin{equation*}
            a_{n}^{(2)} = \left( (n+1)^{2} - n^{2}\right) n! = (2n+1)n! = 2(n+1)!-n!.
        \end{equation*}
         This gives a simpler proof of \cite[Corollary 10]{hodges-2022}.
    \end{itemize}
\end{remark}

\section{Ordinal sum of antichains}\label{sec:ordinalsum}

In this section, we consider a family of posets consisting of the \definition{ordinal sum of antichains}. Let $C=(c_1,c_2,\dotsc,c_r)$ be an ordered sequence of $r$ positive integers. We write $P_C = \bigoplus_{i=1}^{r}T_{c_i}$ for the ordinal sum of antichains of $C$. We completely determine the cumulative generating function of this family of posets. We also show various properties and a poset structure of its cumulative generating function.

The cumulative generating function of the $k$-element antichain $T_k$ is $g_{T_k}(q) = k!(1+q+q^2+\cdots+q^{k-1})$. To find $g_{P_C}(q)$, we start from the antichain $T_{c_1}$ and let $w=(c_1!,\dotsc,c_1!)^{\intercal}$ be the column vector consisting of the coefficients of $g_{T_{c_1}}(q)$. We next attach $c_2$ minimal elements to $T_{c_1}$; the cumulative generating function $g_{T_{c_1} \oplus T_{c_2}}(q)$ is obtained by \cref{thm:attach_k_element2}. Recall that $Y_{c_1}(c_2)$ denotes the $c_1 \times c_1$ diagonal matrix whose $i$th diagonal entry is given by $\frac{(c_2+i-1)!}{(i-1)!}$. The matrix multiplication $Y_{c_1}(c_2)w$ gives the first $c_1$ coefficients of $g_{T_{c_1} \oplus T_{c_2}}(q)$ and the rest of coefficients are given by $(c_1+c_2)!$. As a consequence, we can obtain $g_{P_C}$ by applying \cref{thm:attach_k_element2} repeatedly in this way. The explicit formula of $g_{P_C}(q)$ is summarized in the following proposition.
\begin{prop}\label{prop:ordinalsumg}
    Let $P_C = \bigoplus_{i=1}^{r}T_{c_i}$ be the ordinal sum of antichains of $C$, where $C=(c_1,c_2,\dotsc,c_r)$ is an ordered sequence of $r$ positive integers. 
    Write $g_{P_C}(q) = \sum_{s=0}^{c_1+\cdots+c_r-1}b_s q^s$ for the cumulative generating function of $P_C$.
    For each $0 \le s < c_1 + \cdots + c_r$, let $j \in [r]$ be the unique integer such that $$\sum_{k=1}^{j-1}c_k  \leq s < \sum_{k=1}^j c_k.$$ Then
    \begin{equation}\label{eq:ordinal_sum_antichains_b}
        b_s = (c_1+c_2+\cdots+c_j)! \prod_{m=j+1}^{r} \frac{(c_m+s)!}{s!}.
    \end{equation}
\end{prop}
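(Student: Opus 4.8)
The plan is to prove \cref{eq:ordinal_sum_antichains_b} by induction on $r$, letting \cref{thm:attach_k_element2} carry the inductive step. For the base case $r=1$ one has $P_C=T_{c_1}$, all of whose labelings are natural, so $g_{T_{c_1}}(q)=c_1!(1+q+\cdots+q^{c_1-1})$ as recorded before the proposition; this is precisely the $r=1$ instance of \cref{eq:ordinal_sum_antichains_b}, where necessarily $j=1$, the product $\prod_{m=2}^{1}$ is empty, and $b_s=c_1!$ for $0\le s<c_1$.

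For $r\ge 2$, put $C'=(c_1,\dots,c_{r-1})$, $N'=c_1+\cdots+c_{r-1}$, and $N=c_1+\cdots+c_r$. Following the construction described before the proposition, $P_C$ is obtained from $P_{C'}$ by adjoining an antichain of $c_r$ minimal elements, which is exactly the setting of \cref{thm:attach_k_element2} with $k=c_r$ and $n=N'$. Let $b'_0,\dots,b'_{N'-1}$ be the coefficients of $g_{P_{C'}}(q)$ and $b_0,\dots,b_{N-1}$ those of $g_{P_C}(q)$. Part~(2) of that theorem gives $b_s=(N'+c_r)!=(c_1+\cdots+c_r)!$ for $N'\le s\le N-1$; for such $s$ the index $j$ determined by $\sum_{k=1}^{j-1}c_k\le s<\sum_{k=1}^{j}c_k$ is $j=r$, so the right-hand side of \cref{eq:ordinal_sum_antichains_b} also equals $(c_1+\cdots+c_r)!$ (empty product), and the two agree. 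Part~(1) gives, for $0\le s\le N'-1$, that $b_s$ is the $(s{+}1)$-st diagonal entry of $Y_{N'}(c_r)$ times $b'_s$, i.e.\ $b_s=\frac{(c_r+s)!}{s!}\,b'_s$. For such $s$ we have $s<N'=\sum_{k=1}^{r-1}c_k$, so the $j$ selected for $s$ lies in $\{1,\dots,r-1\}$, and since the partial sums $\sum_{k=1}^{t}c_k$ with $t\le r-1$ are the same for $C$ and $C'$, this $j$ is the one the inductive hypothesis assigns to $s$ in $P_{C'}$. Hence $b'_s=(c_1+\cdots+c_j)!\prod_{m=j+1}^{r-1}\frac{(c_m+s)!}{s!}$, and
\[
b_s=\frac{(c_r+s)!}{s!}\,(c_1+\cdots+c_j)!\prod_{m=j+1}^{r-1}\frac{(c_m+s)!}{s!}=(c_1+\cdots+c_j)!\prod_{m=j+1}^{r}\frac{(c_m+s)!}{s!},
\]
which is \cref{eq:ordinal_sum_antichains_b}. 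This closes the induction.

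Almost all of this is bookkeeping, and I do not expect a genuine obstacle: \cref{thm:attach_k_element2} already contains the combinatorial content. The two points that need a little care are (i) recognizing that the block of ``new'' coefficients produced by part~(2), all equal to $N!$, is exactly the $j=r$ branch of \cref{eq:ordinal_sum_antichains_b}, and (ii) checking that for $s<N'$ the value of $j$ does not change when one passes from $C'$ to $C$, so that the inductive hypothesis can be invoked verbatim. As a byproduct the argument also makes precise the informal recipe stated before the proposition: starting from the coefficient vector of $g_{T_{c_1}}(q)$, multiply its current initial block by $Y_{\bullet}(c_i)$ and append a block filled with the current factorial size, for $i=2,\dots,r$ in turn.
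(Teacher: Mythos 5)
Your proof is correct and formalizes exactly the argument the paper intends: the paper gives no separate proof of \cref{prop:ordinalsumg}, merely asserting in the preceding paragraph that repeated application of \cref{thm:attach_k_element2} (attach $T_{c_2}$, then $T_{c_3}$, \dots, each as new minimal elements) yields the formula, and your induction on $r$ with the $j=r$ block handled by part~(2) and the rest by the diagonal of $Y_{N'}(c_r)$ is precisely that argument written out. The one point worth making explicit is that under the paper's literal definition of $\oplus$ (first summand on the bottom), $P_C=P_{C'}\oplus T_{c_r}$ would adjoin $c_r$ \emph{maximal} elements and \cref{thm:attach_k_element2} would not apply; your reading, in which each successive antichain goes \emph{below} the poset built so far, is the one forced by the prose and the only one under which \cref{eq:ordinal_sum_antichains_b} is actually true (for instance, the poset with two minimal elements under three maximal ones has cumulative generating function $12+36q+72q^2+120q^3+120q^4$, which matches the formula for $C=(3,2)$ rather than $C=(2,3)$), so it would be worth stating this convention at the start of your argument.
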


We now present the following symmetry property for the poset $B_{n,k} = T_n \oplus C_{k+1}$, where $C_{k+1}$ is the chain of $k+1$ elements and $n,k \in \mathbb{Z}_{\geq 0}$. This poset is sometimes called a \definition{broom}.
\begin{prop}\label{prop:broom_gf}
    Let $n,k \in \mathbb{B}_{\geq 0}$. Write $f_{B_{n,k}}(q) = \sum_{s=0}^{n+k}a_s(n,k)q^s$ for the sorting generating function of $B_{n,k}$. Then
    \begin{equation}\label{eq:broom_gf}
        a_s(n,k) = \begin{cases}
            (n+s)!(s+1)^{k+1-s} - (n+s-1)!s^{k+2-s}, & \text{for $s=0,1,\dotsc,k+1$,} \\
            0, & \text{for $s=k+2,k+3,\dotsc,n+k$.}
        \end{cases}
    \end{equation}
    In particular, we have the symmetry property
    \begin{equation}\label{eq:broomsymmetric}
        a_k(n,k) = a_n(k,n), \text{ for $0 \leq n \leq k$}.
    \end{equation}
    % We also have the following recurrence relation, for each $s=0,1,\dotsc,n+k$,
    % \begin{equation}\label{eq:broomrecurrence}
    %     a_s(n,k) = (n-1)a_s(n-1,k)+a_s(n-1,k+1), \text{ for $n \geq 1$ and $k \geq s-1$},
    % \end{equation}
    % with the initial values $a_s(0,k)=s!\left((s+1)^{k-s+1}-s^{k-s+1}\right)$ for $k \geq s-1$.
\end{prop}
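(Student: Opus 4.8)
The plan is to realize $B_{n,k}$ as an iterated ordinal sum and bootstrap from the cumulative generating function machinery of \cref{section:generating-functions}. Write $B_{n,k}=T_n\oplus C_{k+1}$, and note that $C_{k+1}$ is a single point with a chain of $k$ elements attached below it, i.e.\ $C_{k+1}=C_1^{(k)}$. Since $|C_1|=1$, \emph{all} $k+1$ coefficients of $g_{C_{k+1}}(q)$ lie among the ``last $\ell+1$'' coefficients described by \cref{prop:Pk_gf_b} (with $\ell=k$), and that proposition gives the closed form
\[
  g_{C_{k+1}}(q)=\sum_{s=0}^{k}(s+1)^{\,k-s}\,(s+1)!\;q^{s}.
\]
(Equivalently, $C_{k+1}$ is the ordinal sum of $k+1$ copies of $T_1$, so this also follows from \cref{prop:ordinalsumg} applied to the all-ones composition.)

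Next I would attach the $n$ minimal elements of $T_n$ below $C_{k+1}$ using \cref{thm:attach_k_element2}, with base poset $C_{k+1}$ (a $(k+1)$-element poset) and $n$ attached elements, so the relevant rescaling is by the diagonal matrix $Y_{k+1}(n)$, whose action multiplies the coefficient of $q^s$ by $(n+s)!/s!$. The one-line simplification
\[
  \frac{(n+s)!}{s!}\,(s+1)^{\,k-s}\,(s+1)! \;=\; (n+s)!\,(s+1)^{\,k+1-s}
\]
then shows that the cumulative generating function $g_{B_{n,k}}(q)=\sum_s b_s q^s$ satisfies $b_s=(n+s)!\,(s+1)^{\,k+1-s}$ for $0\le s\le k$, while part~(2) of \cref{thm:attach_k_element2} gives $b_s=(n+k+1)!$ for $k+1\le s\le n+k$.

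Now recover the sorting coefficients by differencing: $a_s(n,k)=b_s-b_{s-1}$ with $b_{-1}=0$. For $1\le s\le k$ this is immediate and equals $(n+s)!(s+1)^{k+1-s}-(n+s-1)!s^{\,k+2-s}$, matching \cref{eq:broom_gf}; for $s=0$ it gives $n!$, which is the same expression since $0^{k+2}=0$; at $s=k+1$ it gives $(n+k+1)!-(n+k)!(k+1)$, which is \cref{eq:broom_gf} evaluated at $s=k+1$ because $(k+2)^{0}=1$; and for $k+2\le s\le n+k$ both $b_s$ and $b_{s-1}$ equal $(n+k+1)!$, so $a_s(n,k)=0$. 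The only delicate point is this transition at $s=k+1$, where the cumulative coefficients stop growing — an off-by-one there would corrupt the whole formula — but beyond carefully bookkeeping the parameter substitutions into \cref{prop:Pk_gf_b} and \cref{thm:attach_k_element2}, there is no genuine obstacle.

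Finally, for the symmetry \cref{eq:broomsymmetric}, I would specialize \cref{eq:broom_gf} at $s=k$: for $k\ge 1$,
\[
  a_k(n,k)=(n+k)!\,(k+1)-(n+k-1)!\,k^{2}=(n+k-1)!\bigl((n+k)(k+1)-k^{2}\bigr)=(n+k-1)!\,(nk+n+k),
\]
and the case $k=0$ reads $a_0(n,0)=n!=(n-1)!\,n$, the same expression. Since $nk+n+k$ is symmetric in $n$ and $k$, interchanging the two parameters yields $a_n(k,n)=(n+k-1)!(nk+n+k)=a_k(n,k)$, which is exactly \cref{eq:broomsymmetric}. (It would be natural to ask for a bijective explanation of this identity, but that is beyond the scope of the present argument.)
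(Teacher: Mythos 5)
Your proof is correct and follows essentially the same route as the paper: the paper applies \cref{prop:ordinalsumg} directly to the composition $(1,\dotsc,1,n)$ to get $b_s(n,k)=(n+s)!(s+1)^{k+1-s}$ for $s\le k$ and $(n+k+1)!$ thereafter, then differences, whereas you reach the identical cumulative coefficients in two steps (the chain via \cref{prop:Pk_gf_b}, then one application of \cref{thm:attach_k_element2}) — the same underlying computation. Your explicit factorization $a_k(n,k)=(n+k-1)!(nk+n+k)$ is a nice way of making the symmetry check, which the paper leaves as a direct verification, completely transparent.
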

\begin{proof} 
    By \cref{prop:ordinalsumg} with $c_1=c_2=\cdots=c_{k+1}=1$ and $c_{k+2}=n$, the cumulative generating function of $T_n \oplus C_{k+1}$ is given by $g_{T_n \oplus C_{k+1}}(q) = \sum_{s=0}^{n+k} b_s(n,k)q^s$, where 
    \begin{equation*}
        b_s(n,k) = (s+1)!(s+1)^{k-s}\frac{(n+s)!}{s!} = (n+s)!(s+1)^{k+1-s},
    \end{equation*}
    for $s=0,1,\dotsc,k$. We also have $b_s(n,k) = (n+k+1)!$ for $s = k+1,k+2,\dotsc,n+k$.
    
    Then \cref{eq:broom_gf} follows immediately from the fact that $a_s(n,k) = b_{s}(n,k) - b_{s-1}(n,k)$. The symmetry property (\cref{eq:broomsymmetric}) can be verified directly using \cref{eq:broom_gf}. This completes the proof of \cref{prop:broom_gf}.
\end{proof}

We next study problems proposed by Defant and Kravitz \cite{defant2020promotionsorting}\footnote{The problems are stated as Conjecture 5.2 and Problem 5.3 in their preprint, but not in the published version \cite{defant-kravitz-2023}.}.
Given an $n$-element poset $P$, are the coefficients of the sorting generating function $f_{P}(q)$ and the cumulative generating function $g_P(q)$ unimodal or log-concave? We prove that the coefficients of the cumulative generating function are log-concave for the ordinal sum of antichains and provide a counterexample to the conjecture that the coefficients of the sorting generating function of a general poset are unimodal.

Recall that a sequence of real numbers $(a_i)_{i=0}^{n}$ is called \definition{unimodal} if there is an index $j$ such that $a_0 \leq a_1 \leq a_2 \leq \cdots \leq a_j \geq a_{j+1} \geq \cdots \geq a_n$. We say this sequence is \definition{log-concave} if $a_i^2 \geq a_{i-1}a_{i+1}$ for $i=1,2,\dotsc,n-1$. Note that a positive sequence is log-concave implies that this sequence is unimodal.

We show below that the coefficients of the cumulative generating function of $P_C$ are log-concave.
\begin{prop}\label{prop:ordinal_sum_antichain_log-concave}
Let $P_C = \bigoplus_{i=1}^{r}T_{c_i}$ be the ordinal sum of antichains of $C$, where $C=(c_1,c_2,\dotsc,c_r)$ is a sequence of $r$ positive integers. Let $g_{P_C}(q) = \sum_{s=0}^{c_1+\cdots+c_r-1}b_s q^s$ be the cumulative generating function of $P_C$. Then the sequence $(b_s)_{s=0}^{c_1+\dotsc+c_r-1}$ is log-concave.
\end{prop}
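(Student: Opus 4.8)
The plan is to pass from the coefficients to their consecutive ratios and show these ratios are non-increasing; since the $b_s$ are manifestly positive, this is exactly log-concavity. Write $N = c_1+\cdots+c_r$ and $S_j = c_1+\cdots+c_j$ with $S_0 = 0$, and for $0 \le s < N$ let $j(s)$ be the unique index with $S_{j(s)-1} \le s < S_{j(s)}$, so that \cref{eq:ordinal_sum_antichains_b} reads $b_s = S_{j(s)}!\prod_{m>j(s)}(c_m+s)!/s!$. First I would record the relevant block-structure fact: because every $c_i \ge 1$, consecutive integers satisfy $j(s) \in \{j(s-1),\,j(s-1)+1\}$, and in the boundary case $j(s) = j(s-1)+1$ one necessarily has $s = S_{j(s-1)}$.

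The first real step is to establish the single uniform formula
\[
\rho_s \coloneqq \frac{b_s}{b_{s-1}} \;=\; \prod_{m=j(s-1)+1}^{r}\left(1+\frac{c_m}{s}\right),\qquad 1 \le s \le N-1.
\]
When $s-1$ and $s$ lie in the same block this is immediate from \cref{eq:ordinal_sum_antichains_b}: the leading factorials $S_{j}!$ cancel, and each index $m>j$ contributes $\bigl((c_m+s)!/s!\bigr)\big/\bigl((c_m+s-1)!/(s-1)!\bigr) = 1+c_m/s$. When $s = S_j$ sits at a block boundary (so $j = j(s-1)$ and $j+1 = j(s)$), there is an extra factor $S_{j+1}!/S_j! = (s+c_{j+1})!/s!$ coming from the jump in the leading factorial, and the $m=j+1$ term $\,(c_{j+1}+s-1)!/(s-1)!\,$ is present in $b_{s-1}$ but absent from $b_s$; these combine to $\bigl((s+c_{j+1})!/s!\bigr)\big/\bigl((c_{j+1}+s-1)!/(s-1)!\bigr) = 1+c_{j+1}/s$, so the product is again taken over $m \ge j+1 = j(s-1)+1$. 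Keeping precise track of which range of indices $m$ occurs in the numerator versus the denominator at a boundary is the one place demanding care; I expect this bookkeeping to be the main (though wholly elementary) obstacle, and otherwise the computation is routine.

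With the ratio formula in hand, log-concavity of the positive sequence $(b_s)$ is equivalent to $\rho_s \ge \rho_{s+1}$ for $1 \le s \le N-2$, which I would again split into the two cases. If $j(s-1) = j(s)$, then $\rho_s$ and $\rho_{s+1}$ are products over the same index set $\{j(s)+1,\dots,r\}$ and each factor strictly decreases, $1+c_m/s > 1+c_m/(s+1)$, giving $\rho_s > \rho_{s+1}$. If $j(s) = j(s-1)+1$, then $\rho_s = \bigl(1+\tfrac{c_{j(s)}}{s}\bigr)\prod_{m>j(s)}\bigl(1+\tfrac{c_m}{s}\bigr)$ while $\rho_{s+1} = \prod_{m>j(s)}\bigl(1+\tfrac{c_m}{s+1}\bigr)$; discarding the factor $1+c_{j(s)}/s \ge 1$ and comparing the remaining products termwise again yields $\rho_s \ge \rho_{s+1}$. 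Since all $b_s > 0$, the inequalities $\rho_s \ge \rho_{s+1}$ are precisely $b_s^2 \ge b_{s-1}b_{s+1}$, which finishes the proof.
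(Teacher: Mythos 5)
Your proof is correct. The uniform ratio formula $b_s/b_{s-1} = \prod_{m=j(s-1)+1}^{r}\bigl(1+\tfrac{c_m}{s}\bigr)$ checks out in both the interior and boundary cases (at a boundary $s=S_j$ the jump $S_{j+1}!/S_j!=(s+c_{j+1})!/s!$ combines with the disappearing $m=j+1$ factor to give exactly $1+c_{j+1}/s$, as you say), and monotonicity of these ratios for a positive sequence is precisely log-concavity. The paper instead verifies $b_s^2/(b_{s-1}b_{s+1})\ge 1$ directly from \cref{eq:ordinal_sum_antichains_b}, which forces a four-way case split according to how the triple $s-1,s,s+1$ distributes across blocks (including a degenerate case where $c_{j+1}=1$ and three blocks are involved); your reorganization through consecutive ratios collapses this to two cases applied twice, and moreover makes the mechanism transparent: as $s$ grows, each factor $1+c_m/s$ decreases and the index set over which the product is taken can only shrink, so the ratio is non-increasing for two independent reasons. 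The one microscopic imprecision is your claim of \emph{strict} inequality $\rho_s>\rho_{s+1}$ when $j(s)=j(s-1)$: if $j(s)=r$ both products are empty and equal $1$, so only $\ge$ holds there --- but only $\ge$ is needed, so nothing is lost.
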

\begin{proof}
   We will show that $\frac{b_{s}^2}{b_{s-1}b_{s+1}} \geq 1$ for $s=1,2,\dotsc,c_1 + \cdots + c_r - 2$ by direct computation using \cref{eq:ordinal_sum_antichains_b}. For $j = 1, 2, \ldots, r$, let $\mathcal{I}_j = \{s : \sum_{k=1}^{j-1}c_k  \leq s < \sum_{k=1}^j c_k\}$. The proof is based on the following four cases of the index $s$. We present the calculation for the first two cases below; the other two cases can be proved similarly and we leave them to the reader.
   
    \textbf{Case 1:} $s-1,s,s+1 \in \mathcal{I}_j$ for some $j$. In this case
        \begin{align*}
            \frac{b_{s}^2}{b_{s-1}b_{s+1}} & = \frac{\left( (c_1+\dotsc+c_j)! \prod_{m=j+1}^{r} \frac{(c_m+s)!}{s!} \right)^2}{\left( (c_1+\dotsc+c_j)! \right)^2 \prod_{m=j+1}^{r} \frac{(c_m+s-1)!(c_m+s+1)!}{(s-1)!(s+1)!}} \\
            &= \prod_{m=j+1}^{r}\frac{(s+1)(c_m+s)}{s(c_m+s+1)}\\
            &= \prod_{m=j+1}^{r}\frac{sc_m+s^2+c_m+s}{sc_m+s^2+s} \geq 1,
        \end{align*}
        since $c_m$ and $s$ are positive integers and thus the denominator is always smaller than the numerator.

   \textbf{Case 2:} $s-1,s \in \mathcal{I}_j$ and $s+1 \in \mathcal{I}_{j+1}$ for some $j$. In this case, $s=\sum_{k=1}^{j}c_k-1$, and 
        \begin{align*}
            \frac{b_{s}^2}{b_{s-1}b_{s+1}} & = \frac{\left( (c_1+\dotsc+c_j)! \prod_{m=j+1}^{r} \frac{(c_m+s)!}{s!} \right)^2}{\left( (c_1+\dotsc+c_j)! \prod_{m=j+1}^{r} \frac{(c_m+s-1)!}{(s-1)!} \right) \left((c_1+\dotsc+c_{j+1})! \prod_{m=j+2}^{r} \frac{(c_m+s+1)!}{(s+1)!} \right)}\\
            &= \frac{(c_1+\dotsc+c_j)!}{(c_1+\dotsc+c_{j+1})!} \cdot \frac{(c_{j+1}+s)!(c_{j+1}+s)!(s-1)!}{(c_{j+1}+s-1)!s!s!} \cdot \prod_{m=j+2}^{r}\frac{(s+1)(c_m+s)}{s(c_m+s+1)}  \\   
            &= \frac{(c_1+\dotsc+c_j)!}{(c_1+\dotsc+c_{j+1})!}\frac{(c_{j+1}+s)!\cdot (c_{j+1}+s)}{s! \cdot s} \cdot \prod_{m=j+2}^{r}\frac{(s+1)(c_m+s)}{s(c_m+s+1)}\\
            &= \frac{(s+1)!}{(s+1+c_{j+1})!}\frac{(c_{j+1}+s)!\cdot (c_{j+1}+s)}{s! \cdot s} \cdot \prod_{m=j+2}^{r}\frac{(s+1)(c_m+s)}{s(c_m+s+1)}\\
            & = \frac{(s+1)(c_{j+1}+s)}{s(c_{j+1}+s+1)} \cdot \prod_{m=j+2}^{r}\frac{(s+1)(c_m+s)}{s(c_m+s+1)} \ge 1
        \end{align*}
        by similar reasoning as in Case 1.

        We omit the calculation of showing $\frac{b_{s}^2}{b_{s-1}b_{s+1}} \geq 1$ for the last two cases, since they can be proved similarly.
        
   \textbf{Case 3:} $s-1 \in \mathcal{I}_j$ and $s, s+1 \in \mathcal{I}_{j+1}$ for some $j$. In this case, $s=\sum_{k=1}^{j}c_k$. 
       
   \textbf{Case 4:} $s-1 \in \mathcal{I}_j$, $s \in \mathcal{I}_{j+1}$ and $s+1 \in \mathcal{I}_{j+2}$ for some $j$. In this case, $c_{j+1}=1$ and $s= \sum_{k=1}^{j+1}c_k$. 
\end{proof}
\begin{remark}\label{rmk:false}
    For the poset $P=T_2 \oplus T_2 \oplus T_2$ the sorting generating function is $f_P(q) = 8 + 64q + 216q^2 + 192q^3 + 240q^4$ and the cumulative generating function is $g_P(q) = 8 + 72q + 288q^2 + 480q^3 + 720q^4 + 720q^5$. One can see that the coefficients of $f_P(q)$ are not unimodal, which gives a counterexample to \cite[Conjecture 29]{hodges-2022} (see also \cite[Conjecture 5.2]{defant2020promotionsorting}). One can also check that the coefficients of $g_P(q)$ are log-concave. 
\end{remark}

We close this section with a new direction for studying the cumulative generating function of the ordinal sum of antichains $P_C$. One can ask: how do the cumulative generating functions $g_{P_C}(q)$ and $g_{P_{C^{\prime}}}(q)$ compare when $C^{\prime}$ is a permutation of elements of $C$? Given an ordered sequence of $r$ distinct positive integers $C=(c_1,c_2,\dotsc,c_r)$ and a permutation $\pi$ in the symmetric group on $r$ elements $\mathfrak{S}_r$, define $\pi(C) = (c_{\pi(1)},c_{\pi(2)}, \dotsc,c_{\pi(r)} )$. The collection of the coefficients of the cumulative generating function of $P_{\pi(C)}$ for all $\pi \in \mathfrak{S}_r$ is defined to be
\begin{equation*}
    \mathbf{B}(C) = \{ \mathbf{b_{\pi}} = (b_0,b_1,\dotsc,b_{|C|-1} ) : \mathbf{b_{\pi}} \text{ is the sequence of coefficients of $g_{P_{\pi(C)}}(q)$, $\pi \in \mathfrak{S}_r$} \},
\end{equation*}
where $|C|=\sum_{i=1}^{r}c_i$. A natural partial order on $\mathbf{B}(C)$ is given by the following dominance relation.
\begin{defn}
    For a pair of integer sequences $\mathbf{b} = (b_0,b_1,\dotsc,b_n)$ and $\mathbf{b^{\prime}} = (b_0^{\prime},b_1^{\prime},\dotsc,b_n^{\prime})$, we say $\mathbf{b^{\prime}}$ \definition{dominates} $\mathbf{b}$, denoted by $\mathbf{b} \preceq \mathbf{b^{\prime}}$, if $b_i \leq b_i^{\prime}$ for $i=0,1,\dotsc,n$.
\end{defn}
If $\mathbf{b}$ and $\mathbf{b^{\prime}}$ denote the coefficients of the cumulative generating function of $P$ and $P^{\prime}$ respectively, then the relation $\mathbf{b} \preceq \mathbf{b^{\prime}}$ can be interpreted as saying that the labelings of $P^{\prime}$ require fewer promotions to be sorted compared to those of $P$. It is easy to check that $\preceq$ is a partial order on the set $\mathbf{B}(C)$.
\begin{example}
    For $C=(1,2,3)$, the cumulative generating functions $P_{\pi(C)}$ for $\pi \in \mathfrak{S}_3$ are computed and their coefficients listed below:
    \begin{align*}
        \mathbf{b}_{123} &= (12,144,360,720,720,720), &\mathbf{b}_{132} &= (12,144,288,480,720,720),
        &\mathbf{b}_{213} &= (12,96,360,720,720,720), \\
        \mathbf{b}_{312} &= (12,72,216,480,720,720),
        &\mathbf{b}_{231} &= (12,96,360,480,600,720),
        &\mathbf{b}_{321} &= (12,72,216,480,600,720).
    \end{align*}
    The Hasse diagram of $(\mathbf{B}(C),\preceq)$ is shown in the left of \cref{fig:Hassediagram}. Observe that the subgraph consisting of all the black edges forms the Hasse diagram of the dual to the weak order on $\mathfrak{S}_3$ (see for instance \cite[Exercises 3.183 and 3.185]{ECI} for the definition of weak and strong order on $\mathfrak{S}_n$). The red edge ($\mathbf{b}_{312} \preceq \mathbf{b}_{213}$) shows a new cover relation which does not occur in the weak order on $\mathfrak{S}_3$. 

    Moreover, we draw the Hasse diagram of $(\mathbf{B}(C),\preceq)$ where $C=(1,2,3,4)$ in the right picture of \cref{fig:Hassediagram}. Similarly, the subgraph consisting of black edges forms the Hasse diagram of the dual to the weak order on $\mathfrak{S}_4$ while the red edges show new cover relations in our poset structure compared to the weak order of $\mathfrak{S}_4$. We formulate this observation more generally in the following theorem.
\end{example}
\begin{figure}[htb!]
    \centering
    \begin{subfigure}[t]{0.3\textwidth}
        \centering
        \includegraphics[scale = 0.42]{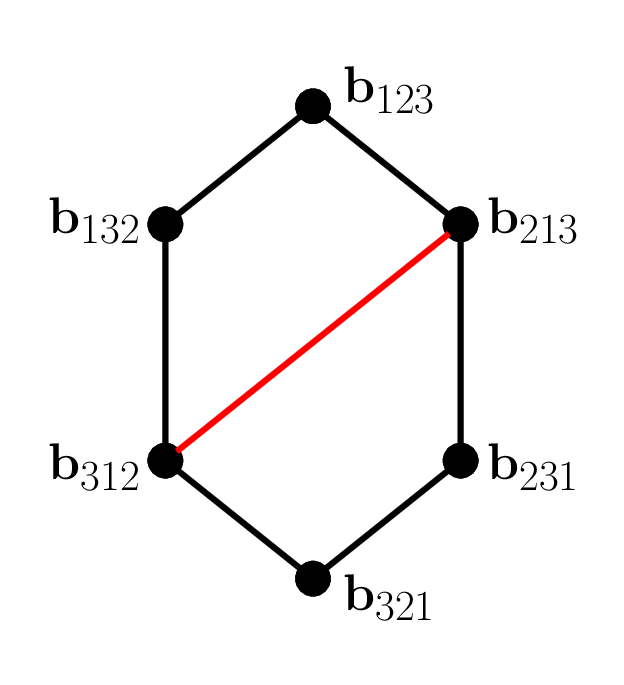}
    \end{subfigure}%
    \begin{subfigure}[t]{0.7\textwidth}
        \centering
        \includegraphics[scale = 0.42]{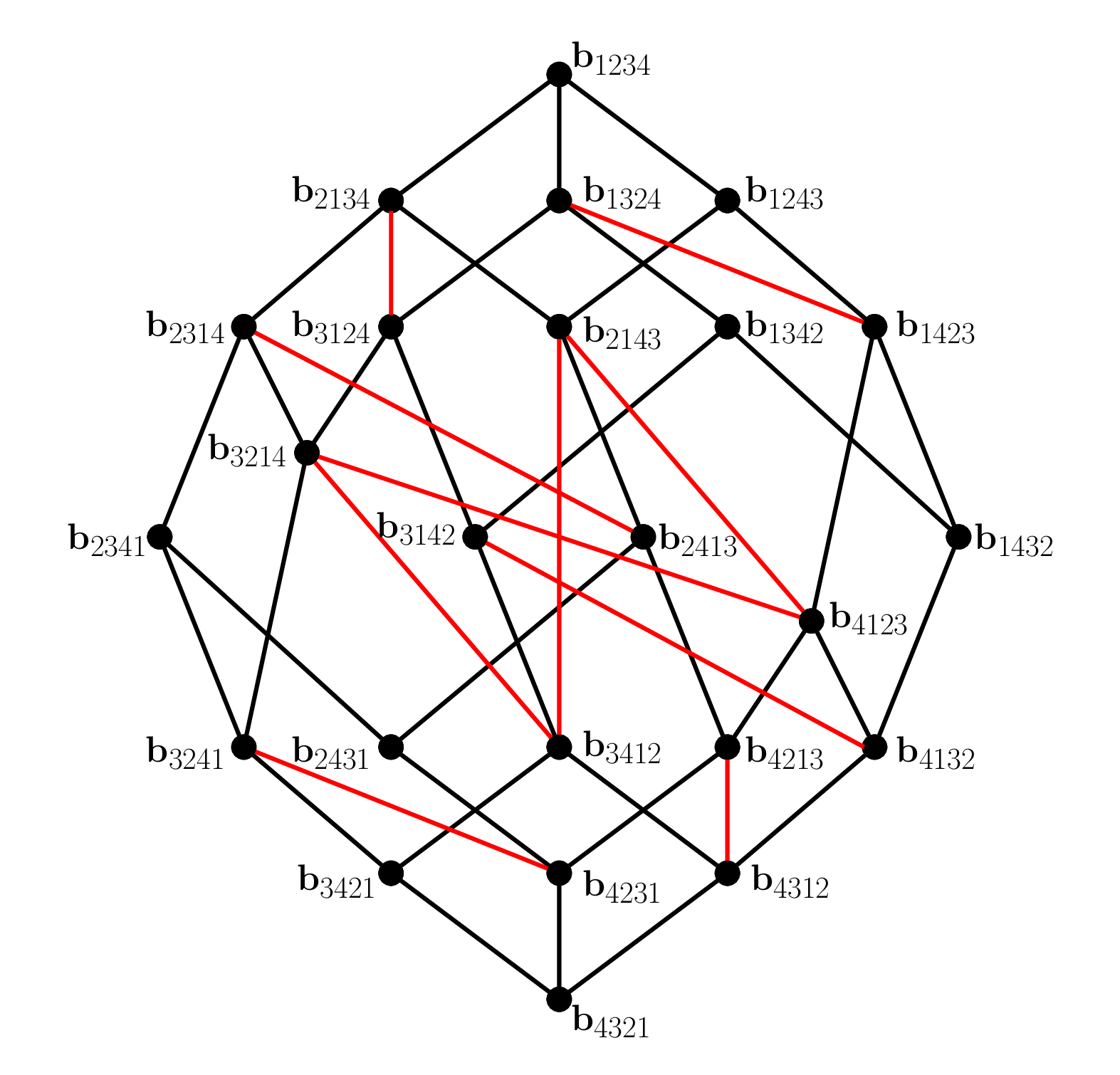}
    \end{subfigure}%
    \caption{The Hasse diagram of $(\mathbf{B}(C),\preceq)$ where $C=(1,2,3)$ (left) and $C=(1,2,3,4)$ (right), which contains a subposet (shown as a subgraph consisting of all the black edges) that is isomorphic to the weak order of $\mathfrak{S}_3$ (left) and $\mathfrak{S}_4$ (right). The new cover relations in our poset structure compared to the weak order of $\mathfrak{S}_3$ (left) and $\mathfrak{S}_4$ (right) are drawn in red.}
    \label{fig:Hassediagram}
\end{figure}
\begin{thm}\label{thm:antichain_Bruhat_order}
    Given an ordered sequence of $r$ distinct positive integers $C=(c_1,c_2,\dotsc,c_r)$. Let $$\mathbf{B}(C) = \{ \mathbf{b_{\pi}} = (b_0,b_1,\dotsc,b_{|C|-1}) \mid \mathbf{b_{\pi}} \text{ is the sequence of the coefficients of $g_{P_{\pi(C)}}(q)$, $\pi \in \mathfrak{S}_r$} \},$$
    where $|C|=\sum_{i=1}^{r}c_i$. Then the poset $(\mathbf{B}(C),\preceq)$ is isomorphic to a refinement of the poset $(\mathfrak{S}_r,\leq)$, where $\leq$ is the weak order on $\mathfrak{S}_r$.
\end{thm}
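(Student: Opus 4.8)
The plan is to produce an explicit order-preserving bijection $\Psi\colon(\mathfrak{S}_r,\le)\to(\mathbf{B}(C),\preceq)$, where $\le$ is the right weak order; the left weak order is isomorphic to it (via $\pi\mapsto\pi^{-1}$), so treating the right one suffices. Such a $\Psi$ transports $\preceq$ to a partial order $\le^{\star}$ on $\mathfrak{S}_r$ with $\pi\le^{\star}\sigma\iff\Psi(\pi)\preceq\Psi(\sigma)$, and order-preservingness of $\Psi$ says exactly that $\le\,\subseteq\,\le^{\star}$; hence $(\mathbf{B}(C),\preceq)\cong(\mathfrak{S}_r,\le^{\star})$ is a refinement of $(\mathfrak{S}_r,\le)$. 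Since the set $\mathbf{B}(C)$ depends only on the underlying set of $C$, I would first assume $c_1<c_2<\dots<c_r$, write $\pi(C)=(c_{\pi(1)},\dots,c_{\pi(r)})$, and let $\mathbf{b}_\pi$ denote the coefficient vector of $g_{P_{\pi(C)}}$, computed throughout from the formula in \cref{prop:ordinalsumg} (equation \cref{eq:ordinal_sum_antichains_b}).

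\textbf{Step 1: injectivity of $\pi\mapsto\mathbf{b}_\pi$.} For $r=1$ this is clear. For $r\ge 2$, write $D=\pi(C)$ and $\sigma_j=d_1+\dots+d_j$. Formula \cref{eq:ordinal_sum_antichains_b} gives $b_s=|C|!$ for every $s\ge\sigma_{r-1}$, whereas $b_{\sigma_{r-1}-1}=\sigma_{r-1}\,(|C|-1)!<|C|!$; since $(b_s)_s$ is nondecreasing, the set $\{s:b_s=|C|!\}$ is exactly the final segment $\{\sigma_{r-1},\dots,|C|-1\}$, which recovers $d_r=|C|-\sigma_{r-1}$. For $s<\sigma_{r-1}$ the same formula gives $b_s=\frac{(d_r+s)!}{s!}\,b'_s$, where $b'$ is the coefficient vector of $g_{P_{(d_1,\dots,d_{r-1})}}$; dividing out this known factor recovers $b'$, and induction on $r$ recovers $(d_1,\dots,d_{r-1})$, hence $D$, hence $\pi$ (the $c_i$ being distinct). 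In particular $|\mathbf{B}(C)|=r!$.

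\textbf{Step 2: the weak-order covering lemma.} The crux is the claim that if $\pi\lessdot\pi s_i$ is a cover in right weak order — equivalently $\pi(i)<\pi(i+1)$, equivalently $c_{\pi(i)}<c_{\pi(i+1)}$ — then $\mathbf{b}_{\pi s_i}\preceq\mathbf{b}_\pi$; note $\pi s_i$ just interchanges the antichains in positions $i$ and $i+1$. Put $a=c_{\pi(i)}<b=c_{\pi(i+1)}$ and $\Sigma=\sum_{k<i}c_{\pi(k)}$. For $s\notin[\Sigma,\Sigma+a+b)$ formula \cref{eq:ordinal_sum_antichains_b} gives the same value under $\pi$ and $\pi s_i$, since the multiset $\{a,b\}$ enters the relevant factorial and product symmetrically. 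For $s\in[\Sigma,\Sigma+a+b)$ a short block-by-block computation shows $b^{\pi s_i}_s/b^{\pi}_s$ is a ratio of two products of equally many consecutive integers, comparable termwise: it equals $\prod_{t=1}^{b-a}\frac{\Sigma+a+t}{s+a+t}$ for $\Sigma\le s<\Sigma+a$, equals $\prod_{t=1}^{a}\frac{s+t}{\Sigma+b+t}$ for $\Sigma+a\le s<\Sigma+b$, and equals $1$ for $\Sigma+b\le s<\Sigma+a+b$; the first two are $\le 1$ because $s\ge\Sigma$ in the first range and $s<\Sigma+b$ in the second. Transitivity of $\preceq$ upgrades this to: $\pi\mapsto\mathbf{b}_\pi$ is order-reversing on all of right weak order. (It is in fact strictly so on covers, since the middle range is nonempty and the ratio there is $<1$, though the theorem does not require this.)

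\textbf{Step 3: assembling the isomorphism.} Left multiplication by the longest element, $\psi\colon\pi\mapsto w_0\pi$, is an anti-automorphism of right weak order, because $\ell(w_0\pi s)=\ell(w_0\pi)-1$ whenever $\ell(\pi s)=\ell(\pi)+1$. Define $\Psi(\pi)=\mathbf{b}_{w_0\pi}$; by Step 1 it is a bijection onto $\mathbf{B}(C)$. If $\pi\le\sigma$ in right weak order then $w_0\sigma\le w_0\pi$, so Step 2 gives $\mathbf{b}_{w_0\pi}\preceq\mathbf{b}_{w_0\sigma}$, that is $\Psi(\pi)\preceq\Psi(\sigma)$. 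Thus $\Psi$ is an order-preserving bijection, and by the opening remark $(\mathbf{B}(C),\preceq)$ is isomorphic to a refinement of $(\mathfrak{S}_r,\le)$; the composition with $w_0$ is precisely what turns the ``dual weak order'' observed in the examples into an honest refinement of the weak order. I expect Step 2 to be the main obstacle — not because the inequalities are deep, but because it is easy to misalign the weak-order convention with the direction of the swap and to slip on the block indices across the three ranges of $s$; Steps 1 and 3 are bookkeeping once \cref{prop:ordinalsumg} is available.
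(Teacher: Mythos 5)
Your proof is correct and follows essentially the same route as the paper: the heart of both arguments is the adjacent-swap lemma (your Step 2, the paper's \cref{lem:antichain_Bruhat_order}), established by the same three-range ratio computation from \cref{prop:ordinalsumg}, followed by composition with an anti-automorphism of the weak order ($\pi\mapsto w_0\pi$ for you, $\pi\mapsto\mathsf{rev}(\pi)=\pi w_0$ in the paper) to convert the order-reversing map $\pi\mapsto\mathbf{b}_\pi$ into an order-preserving bijection. Your Step 1, proving injectivity of $\pi\mapsto\mathbf{b}_\pi$ by recovering the composition from the coefficient vector, addresses a point the paper leaves implicit, and it is a genuinely needed ingredient since the claimed isomorphism onto a refinement requires $|\mathbf{B}(C)|=r!$.
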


We first prove the following lemma which will be used to prove \cref{thm:antichain_Bruhat_order}.
\begin{lem}\label{lem:antichain_Bruhat_order}
    Given an ordered sequence of $r$ distinct positive integers $C=(c_1,c_2,\dotsc,c_r)$. Let $\pi=(i,i+1) \in \mathfrak{S}_r$ be a transposition. Let $\mathbf{b} = (b_0,\dotsc,b_{c_1+\dotsc+c_r-1})$ and $\mathbf{b_{\pi}} = (b_0^{\prime},\dotsc,b_{c_1+\dotsc+c_r-1}^{\prime})$ be the coefficients of the cumulative generating functions $g_{P_C}(q)$ and $g_{P_{\pi(C)}}(q)$, respectively. If $c_i < c_{i+1}$, then $\mathbf{b_{\pi}} \preceq \mathbf{b}$ for $i=1,2,\dotsc,r-1$. 
\end{lem}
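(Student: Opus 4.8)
The plan is to establish the dominance $\mathbf{b_{\pi}} \preceq \mathbf{b}$ coordinatewise, working directly from the closed formula of \cref{prop:ordinalsumg}. Introduce the partial sums $S_j = c_1 + \cdots + c_j$ (with $S_0 = 0$) for $C$ and $S_j^{\prime}$ for $\pi(C)$. Since the transposition $\pi = (i,i+1)$ only exchanges the entries $c_i$ and $c_{i+1}$, we have $S_j^{\prime} = S_j$ for all $j \neq i$, whereas $S_i^{\prime} = S_{i-1} + c_{i+1}$ and $S_i = S_{i-1} + c_i$, so the hypothesis $c_i < c_{i+1}$ gives the chain $S_{i-1} < S_i < S_i^{\prime} < S_{i+1}$. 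The blocks into which \cref{prop:ordinalsumg} partitions $\{0, 1, \dots, |C| - 1\}$ therefore agree for $C$ and $\pi(C)$ except around index $i$, and my first step is to record the block index of a given $s$ for each of the two sequences in the five subranges $s < S_{i-1}$, $\ S_{i-1} \le s < S_i$, $\ S_i \le s < S_i^{\prime}$, $\ S_i^{\prime} \le s < S_{i+1}$, and $s \ge S_{i+1}$.

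In the first, fourth, and last of these ranges I expect $b_s = b_s^{\prime}$ on the nose. Indeed, for $s < S_{i-1}$ the common block index $j$ satisfies $j \le i - 1$, so the prefactor $(c_1 + \cdots + c_j)!$ is unchanged and the product $\prod_{m = j+1}^{r} (c_m + s)!/s!$ is unchanged because it merely reorders the factors coming from $c_i$ and $c_{i+1}$; for $s \ge S_{i+1}$ the common block index is $\ge i+2$ and the same argument applies; and for $S_i^{\prime} \le s < S_{i+1}$ the block index is $i+1$ for both sequences, with prefactor $(c_1 + \cdots + c_{i+1})! = S_{i+1}!$ in either case and with the trailing product running only over $m \ge i+2$.

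The content of the lemma lives in the window $S_{i-1} \le s < S_i^{\prime}$, which I will split at $S_i$. For $S_{i-1} \le s < S_i$ the block index is $i$ for both $C$ and $\pi(C)$; cancelling common factors, the formula collapses to
\[
\frac{b_s^{\prime}}{b_s} = \frac{S_i^{\prime}!}{S_i!} \cdot \frac{(c_i + s)!}{(c_{i+1} + s)!} = \prod_{t=1}^{c_{i+1} - c_i} \frac{S_i + t}{c_i + s + t},
\]
and each factor is $\le 1$ since $S_i + t = S_{i-1} + c_i + t \le s + c_i + t$ by $s \ge S_{i-1}$. For $S_i \le s < S_i^{\prime}$ the block index is $i+1$ for $C$ but only $i$ for $\pi(C)$; using $S_{i+1} = S_i^{\prime} + c_i$ the formula collapses to
\[
\frac{b_s^{\prime}}{b_s} = \frac{S_i^{\prime}!}{S_{i+1}!} \cdot \frac{(c_i + s)!}{s!} = \prod_{t=1}^{c_i} \frac{s + t}{S_i^{\prime} + t},
\]
and each factor is $< 1$ since $s < S_i^{\prime}$. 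Putting the five ranges together yields $b_s^{\prime} \le b_s$ for every $s$, i.e.\ $\mathbf{b_{\pi}} \preceq \mathbf{b}$, and the inequality is strict for at least one $s$ because the window $S_i \le s < S_i^{\prime}$ contains an integer (its length is $c_{i+1}-c_i \ge 1$), which will be useful when deducing that this is a genuine cover relation in \cref{thm:antichain_Bruhat_order}.

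I anticipate that the only real difficulty is bookkeeping: pinning down the correct block index $j$ for each sequence in each subrange, cancelling the common factorial quotients carefully, and handling degeneracies such as $i = r - 1$ (empty trailing products) and $i=1$ (so $S_{i-1}=0$). The algebra itself is routine telescoping of ratios of factorials, of exactly the kind already carried out in the proof of \cref{prop:ordinal_sum_antichain_log-concave}.
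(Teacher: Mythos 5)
Your proof is correct and follows essentially the same route as the paper: both arguments observe that $b_s = b_s'$ outside the window $[S_{i-1}, S_i')$ and then bound the ratio $b_s'/b_s$ by $1$ on the two subranges $S_{i-1}\le s<S_i$ and $S_i\le s<S_i'$ (the paper's Cases 1 and 2), with your telescoping products of linear factors being just a rewriting of the paper's ratios of rising factorials. The extra observation that the inequality is strict on the nonempty window $S_i \le s < S_i'$ is a correct (and useful) bonus not made explicit in the paper.
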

\begin{proof}
    For convenience, we write $\pi(C) = (d_1,d_2,\dotsc,d_r)$, where $d_i = c_{i+1}$, $d_{i+1} = c_i$ and $d_k = c_k$ for $k \neq i,i+1$. For $j = 1, 2, \ldots, r$, let $\mathcal{I}_j = \{s : \sum_{k=1}^{j-1}c_k \leq s < \sum_{k=1}^j c_k \}$ and $\mathcal{I^{\prime}}_j = \{s : \sum_{k=1}^{j-1}d_k \leq s < \sum_{k=1}^j d_k\}$. Since $c_k = d_k$ only differ for $k = i$ and $k= i+1$, \cref{prop:ordinalsumg} implies that $b_s = b_s^{\prime}$ for $s \in \mathcal{I}_{j}$ where $j \neq i, i+1$. 

    Notice that $\mathcal{I}_i \cup \mathcal{I}_{i+1} = \mathcal{I}^{\prime}_i \cup \mathcal{I}^{\prime}_{i+1}$ and $\mathcal{I}_i \subseteq \mathcal{I}^{\prime}_i$, it remains to check $b_s^{\prime} / b_s \leq 1$ holds for the following three cases: (1) $s \in \mathcal{I}_i$, (2) $s \in \mathcal{I^{\prime}}_i \setminus \mathcal{I}_i$, and (3) $s \in \mathcal{I^{\prime}}_{i+1}$. This will imply that $\mathbf{b_{\pi}} \preceq \mathbf{b}$. For the last case, we obtain the equality $b_s = b_s^{\prime}$ by \cref{prop:ordinalsumg} immediately. The calculation for the first two cases is presented below. 

    Let $x^{\overline{n}} = \prod_{k=1}^{n}(x+k-1)$ denote the rising factorial of $x$.
    
    \textbf{Case 1:} $s \in \mathcal{I}_i$. We may write $s = c_1 +\dotsc + c_{i-1} + t$, where $0 \leq t \leq c_i-1$. Then for each such $t$,
        \begin{align*}
            \frac{b_{s}^{\prime}}{b_s} & = \frac{(d_1+\dotsc+d_i)! \prod_{m=i+1}^{r} \frac{(d_m+s)!}{s!}}{(c_1+\dotsc+c_i)! \prod_{m=i+1}^{r} \frac{(c_m+s)!}{s!}}\\
            &= \frac{(c_1 + \dotsc + c_{i-1} + c_{i+1})! (c_{i}+s)!}{(c_1 + \dotsc + c_{i-1} + c_{i})! (c_{i+1}+s)!}\\
            & = \frac{(c_1 + \dotsc + c_{i-1} + c_{i+1})! (c_1 +\dotsc + c_{i-1} + c_{i} + t)!}{(c_1 + \dotsc + c_{i-1} + c_{i})! (c_1 +\dotsc + c_{i-1} + c_{i+1} + t)!}\\
            &= \frac{(c_1 + \dotsc + c_{i-1} + c_{i}+1)^{\overline{t}}}{(c_1 + \dotsc + c_{i-1} + c_{i+1}+1)^{\overline{t}}} \leq 1,
        \end{align*}
        because $c_i < c_{i+1}$. 

    \textbf{Case 2:} $s \in \mathcal{I^{\prime}}_{i} \setminus \mathcal{I}_{i}$. We may write $s = c_1 +\dotsc + c_{i} + t$, where $0 \leq t \leq c_{i+1}-c_{i}-1$. Then for each such $t$,
            \begin{align*}
                \frac{b_{s}^{\prime}}{b_s} & = \frac{(d_1+\dotsc+d_{i})! \prod_{m=i+1}^{r} \frac{(d_m+s)!}{s!}}{(c_1+\dotsc+c_{i+1})! \prod_{m=i+2}^{r} \frac{(c_m+s)!}{s!}} \\
                &=
                \frac{(c_1 + \dotsc + c_{i-1} + c_{i+1})! (c_{i}+s)!}{(c_1 + \dotsc +c_{i+1})! s!}\\
                & = \frac{(c_1 + \dotsc + c_{i-1} + c_{i+1})! (c_1 +\dotsc + c_{i} + c_{i} + t)!}{(c_1 + \dotsc + c_i + c_{i+1})! (c_1 +\dotsc + c_{i} + t)!}\\
                &= \frac{(c_1 + \dotsc + c_{i-1} + c_{i}+t+1)^{\overline{c_{i+1}-c_{i}-t}}}{(c_1 + \dotsc + c_{i} + c_{i}+t+1)^{\overline{c_{i+1}-c_{i}-t}}} \leq 1,
            \end{align*}
            by the same reasoning in Case 1.
            This completes the proof of \cref{lem:antichain_Bruhat_order}.
\end{proof}
\begin{proof}[Proof of \cref{thm:antichain_Bruhat_order}]
    Without loss of generality, we assume the elements of $C$ are written in the increasing order, $c_1 < c_2 < \cdots < c_r$. The permutations $\pi \in \mathfrak{S}_r$ in this proof will be written in the one-line notation $\pi = p_1p_2 \cdots p_r$. 
    
    Define the map $\phi: (\mathfrak{S}_r,\leq) \rightarrow (\mathbf{B}(C),\preceq)$ by sending a permutation $\pi = p_1p_2 \dotsc p_r$ to $\mathbf{b}_{\mathsf{rev}(\pi)}$, where $\mathsf{rev}(\pi) = p_rp_{r-1} \dotsc p_1$ is the reverse of $\pi$, and $\mathbf{b}_{\mathsf{rev}(\pi)}$ is the sequence of the coefficients of $g_{ P_{\mathsf{rev}(\pi)(C)}}(q)$. Let $\sigma$ be the adjacent transposition that swapped the elements at positions $i$ and $i+1$. Let $\pi \in \mathfrak{S}_r$ be a permutation such that $\pi \leq \sigma\pi$ in the weak order. One may write $\pi = p_1p_2 \dotsc p_r$ with $p_i < p_{i+1}$, and $\sigma\pi = p_1 \dotsc p_{i-1}p_{i+1}p_{i}p_{i+2} \dotsc p_r$.
    
    We show below that if $\pi \leq \sigma\pi$ in $(\mathfrak{S}_r,\leq)$, then $\phi(\pi) \preceq \phi(\sigma\pi)$ in $(\mathbf{B}(C),\preceq)$. Intuitively, $\mathsf{rev}(\pi)(C) = \{c_{p_r},\dotsc,c_{p_1}\}$ and $\mathsf{rev}(\sigma\pi)(C) = \{c_{p_r},\dotsc,c_{p_{i+2}},c_{p_{i}},c_{p_{i+1}},c_{p_{i-1}},\dotsc,c_{p_1}\}$. Since $p_i < p_{i+1}$ and $c_{p_i} < c_{p_{i+1}}$ (by the assumption that $c_i$'s are increasing as $i$ increases), by \cref{lem:antichain_Bruhat_order}, we obtain $\mathbf{b}_{\mathsf{rev}(\pi)} \preceq \mathbf{b}_{\mathsf{rev}(\sigma\pi)}$.
    
    Therefore, $\phi(\pi) = \mathbf{b}_{\mathsf{rev}(\pi)} \preceq \mathbf{b}_{\mathsf{rev}(\sigma\pi)} = \phi(\sigma\pi)$. The poset $(\mathbf{B}(C),\preceq)$ is thus isomorphic to a refinement of $(\mathfrak{S}_r,\leq)$.
\end{proof}

We would like to point out that $(\mathbf{B}(C),\preceq)$ is not a subposet of the strong order of $\mathfrak{S}_n$ in general. Take $C=(1,2,3,4)$ as an example (see the right picture of \cref{fig:Hassediagram} again); the cover relation $\mathbf{b}_{4123} \preceq \mathbf{b}_{3214}$, under the inverse of the map $\phi$ defined in the proof of \cref{thm:antichain_Bruhat_order}, does not relate in the strong order of $\mathfrak{S}_4$. One can also check that $(\mathbf{B}(C),\preceq)$ is not graded in general.

%%%%%%%%%%%%%%%%%%%%%%%%%%%%%%%%%
\section{Future Work}
\label{section:future-work}

We present some future directions from this work. In this paper, we propose the $(n-2)!$ conjecture (\cref{conj:(n-1)-refinement}), stating that the number of tangled $x$-labelings (the label of $x$ is fixed by $n-1$) of an $n$-element poset $P$ is bounded by $(n-2)!$. In \cref{section:inflated-rooted-forests} and \cref{section:shoelace-posets}, we prove that inflated rooted forest posets and shoelace posets satisfy the $(n-2)!$ conjecture. We also obtain the exact enumeration of tangled labelings of the $W$-poset (as a special case of the shoelace poset) in \cref{thm:w-tangled}. One can define inflated shoelace posets in analogy with inflated rooted forest posets. An interesting question would be to investigate whether inflated shoelace posets satisfy the $(n-2)!$ conjecture. Other general classes of posets that would be of interest to study include posets related to Young tableaux.

In \cref{section:generating-functions}, we explicitly determine the number of $k$-sorted labelings of the poset $T_s \oplus P$ from $P$ (attach $s$ minimal elements to $P$) via the matrix multiplication stated in \cref{thm:attach_k_minimal_gf}. However, obtaining the number of $k$-sorted labelings of the poset $P \oplus T_s$ from $P$ (attach $s$ maximal elements to $P$) does not seem to have such a nice pattern. There may exist some other ways to express them. We leave this direction to be pursued by the interested reader.

In \cref{sec:ordinalsum}, we introduce the new poset structure $(\mathbf{B}(C),\preceq)$ and show that it contains a subposet which is isomorphic to the weak order of the symmetric group (\cref{thm:antichain_Bruhat_order}). It would be an interesting follow-up to fully characterize our poset $(\mathbf{B}(C),\preceq)$ as a poset on permutations.

\section*{Acknowledgments}
This work was initiated at the 2023 Graduate Research Workshop in Combinatorics, which was supported in part by NSF grant \# 1953445, NSA grant \# H98230-23-1-0028, and the Combinatorics Foundation. The last author was supported in part by the Natural Sciences and Engineering Research Council of Canada.

We would like to thank the workshop organizers and the University of Wyoming for their hospitality during our stay in Laramie. We also thank Joel Jeffries, Lauren Kimpel, and Nick Veldt for their contributions to the beginning of this project.

%\bibliographystyle{plain}
%\bibliography{references}
%\printbibliography

\end{document}